\setlist[enumerate,1]{label=(\alph*),ref=(\alph*)}
\theoremstyle{plain} \newtheorem*{thm}{Theorem}
 \newtheorem{theorem}{Theorem}[section]
\newtheorem{proposition}[theorem]{Proposition}
\newtheorem{lemma}[theorem]{Lemma}
\newtheorem{corollary}[theorem]{Corollary} \theoremstyle{definition}
\newtheorem{definition}[theorem]{Definition}
 \theoremstyle{remark}
\newtheorem{remark}[theorem]{Remark}
\begin{document}

\date{\today}
  
\title{An introduction to higher Auslander--Reiten theory}

\author[G.~Jasso]{Gustavo Jasso}
\address[Jasso]{Mathematisches Institut\\
  Rheinische Friedrich-Wilhelms-Universit\"at Bonn\\
  Endenicher Allee 60\\
  D-53115 Bonn\\
  Germany} \email{gjasso@math.uni-bonn.de}
\urladdr{https://gustavo.jasso.info}

\author[S.~Kvamme]{Sondre Kvamme}
\address[Kvamme]{Mathematisches Institut\\
  Rheinische Friedrich-Wilhelms-Universit\"at Bonn\\
  Endenicher Allee 60\\
  D-53115 Bonn\\
  Germany} \email{sondre@math.uni-bonn.de}

\begin{abstract}
  This article consists of an introduction to Iyama's higher
Aus\-lan\-der--Reiten theory for Artin algebras from the viewpoint of
higher homological algebra. We provide alternative proofs of the basic
results in higher Auslander--Reiten theory, including the existence of
$d$-almost-split sequences in $d$-cluster-tilting subcategories,
following the approach to classical Auslander--Reiten theory due to
Auslander, Reiten, and Smal{\o}. We show that Krause's proof of
Auslander's defect formula can be adapted to give a new proof of the
defect formula for $d$-exact sequences. We use the defect formula to
establish the existence of morphisms determined by objects in
$d$-cluster-tilting subcategories.


\end{abstract}

\subjclass[2010]{Primary: 16G70. Secondary: 16G10}

\keywords{Auslander--Reiten theory; almost-split sequences; morphisms
  determined by objects; defect formula}

\thanks{The authors thank Theo Raedschelders for bringing to their attention the article
\cite{MR1987342} and for participating in discussions which lead to the writing
of the present article. These thanks are extended to Julian K\"ulshammer and
Matthew Pressland for their comments on a previous version of this article. The
authors also wish to thank the anonymous referee for corrections and detailed
comments that improved the readability of this article.
The first named author wishes to thank Insitut Mittag-Leffler and the
organizers of program 1415s where a substantial part of this article
was written. The second named author's work was funded by the Bonn
International Graduate School of Mathematics.
}

\maketitle

\section{Introduction}

Let $R$ be a commutative artinian ring. This article consists of an introduction
to Iyama's higher Auslander--Reiten theory for Artin $R$-algebras from the
viewpoint of higher homological algebra.

Auslander--Reiten theory was introduced in a series of articles between 1971 and
1978, see
\cite{MR0349747,MR0480688,MR0342505,MR0379599,MR0439881,MR0439882,MR0472919}.
Since its introduction, Auslander--Reiten theory has become a fundamental tool
for studying the representation theory of Artin algebras, see for example
\cite{MR2197389,MR1476671}.

The concept of an almost split sequence, central in Auslander--Reiten theory,
has its origin in the following fundamental theorem of Auslander.

\begin{thm}[(Auslander correspondence, see Section III.4 in
  \cite{auslander_representation_1971})]
  There is a one-to-one correspondence between the Morita-equivalence classes of
  Artin $R$-algebras $\Lambda$ of finite representation type and
  Morita-equivalence classes of Auslander $R$-algebras, that is Artin
  $R$-algebras $\Gamma$ satisfying
  \begin{equation*}
    \gldim\Gamma\leq 2\leq\operatorname{dom.dim}\Gamma
  \end{equation*}
  where $\operatorname{dom.dim}\Gamma$ denotes the dominant dimension of
  $\Gamma$ in the sense of \cite{MR0349740}. The correspondence is given by
  $M\mapsto\End_\Lambda(M)$, where $M$ is a representation generator of
  $\mmod\Lambda$.
\end{thm}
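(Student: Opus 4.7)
The plan is to construct mutually inverse maps between the two classes and verify the homological conditions on each side.

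\emph{Forward direction.} Given $\Lambda$ of finite representation type, I pick a basic representation generator $M$ of $\mmod\Lambda$ (a module with $\add M = \mmod\Lambda$) and set $\Gamma := \End_\Lambda(M)^{\mathrm{op}}$. The Yoneda lemma produces an equivalence of additive categories $\Hom_\Lambda(M,-)\colon \add M \xrightarrow{\sim} \operatorname{proj}\Gamma$. To bound $\gldim\Gamma$ by $2$, I lift a projective presentation $Q_1 \to Q_0 \to X \to 0$ of any $X \in \mmod\Gamma$ through this equivalence to a morphism $f\colon N_1 \to N_0$ in $\add M$, and apply the left-exact functor $\Hom_\Lambda(M,-)$ to the exact sequence $0 \to \ker f \to N_1 \to N_0$; since $\ker f \in \mmod\Lambda = \add M$, this produces a projective resolution of $X$ of length at most two. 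For $\operatorname{dom.dim}\Gamma \geq 2$, I apply the same functor to a minimal injective copresentation $0 \to \Lambda \to I^0 \to I^1$ in $\mmod\Lambda$: since $M$ is a representation generator, $I^0, I^1 \in \add M$, so the two terms of the resulting copresentation of $\Gamma$ are projective $\Gamma$-modules, while their injectivity follows from Nakayama duality, which identifies each $\Hom_\Lambda(M, I^j)$ with the $R$-dual of a projective right-$\Gamma$-module.

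\emph{Inverse direction.} Given $\Gamma$ with $\gldim\Gamma \leq 2 \leq \operatorname{dom.dim}\Gamma$, let $P$ be a basic minimal faithful projective-injective $\Gamma$-module (which exists by $\operatorname{dom.dim}\Gamma \geq 1$), and set $\Lambda := \End_\Gamma(P)^{\mathrm{op}}$ and $M := \Hom_\Gamma(P,\Gamma)$. The functor $G := \Hom_\Gamma(P,-)$ is exact since $P$ is projective, and since $\operatorname{dom.dim}\Gamma \geq 2$ provides an injective copresentation of each projective $\Gamma$-module by objects of $\add P$, a short diagram chase shows that $G$ is fully faithful on $\operatorname{proj}\Gamma$. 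Hence $G$ restricts to an equivalence $\operatorname{proj}\Gamma \xrightarrow{\sim} \add M$ which yields $\End_\Lambda(M) \cong \Gamma^{\mathrm{op}}$; since $P$ is a direct summand of $\Gamma$, $\Lambda = G(P)$ is a summand of $M = G(\Gamma)$, so $M$ is a generator.

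The main obstacle is to promote $M$ from a generator to a \emph{representation} generator of $\mmod\Lambda$, that is, to verify $\add M = \mmod\Lambda$ (so that $\Lambda$ has finite representation type). This is where the hypothesis $\gldim\Gamma \leq 2$ enters essentially: for $Y \in \mmod\Lambda$, I lift a minimal projective presentation $\Lambda^m \to \Lambda^n \to Y \to 0$ along the equivalence $\add P \simeq \operatorname{proj}\Lambda$ to a morphism $P^m \to P^n$ in $\add P$, obtaining an exact sequence $0 \to K \to P^m \to P^n \to X \to 0$ in $\mmod\Gamma$ with $K$ projective by the global-dimension bound, and exactness of $G$ then recovers $Y = G(X)$. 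The delicate step is showing that $X$ itself lies in $\operatorname{proj}\Gamma$, which places $Y$ in $\add M$; here one exploits the interplay between the minimality of the presentation and the rigidity provided by the faithfulness of $P$. Once this is established, mutual inverseness of the two constructions follows from the Yoneda lemma applied to the identifications $P \leftrightarrow \Lambda$ and $M \leftrightarrow \Gamma$.
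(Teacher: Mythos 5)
The paper does not actually prove this theorem: it is quoted in the introduction from Auslander's Queen Mary College notes purely as motivation, so there is no in-paper argument to measure your proposal against; I review it on its own terms. Your forward direction is the standard projectivization argument and is essentially correct, up to one slip: to bound the dominant dimension you must apply $\Hom_\Lambda(M,-)$ to a minimal injective copresentation $0\to M\to I^0\to I^1$ of $M$ itself, so that the result is a copresentation of $\Gamma=\Hom_\Lambda(M,M)$; applying it to $0\to\Lambda\to I^0\to I^1$ only handles the direct summand $\Hom_\Lambda(M,\Lambda)$ of $\Gamma$.

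The inverse direction, however, has a genuine gap: the ``delicate step'' you defer --- that the cokernel $X$ of the lifted minimal presentation $P^m\to P^n$ lies in $\operatorname{proj}\Gamma$ --- is simply false, and the whole strategy rests on it. Take $\Lambda=k[x]/(x^2)$, $M=k\oplus\Lambda$, $\Gamma=\End_\Lambda(M)$, and $P=\Hom_\Lambda(M,\Lambda)$ the indecomposable projective--injective. The minimal presentation $\Lambda\xrightarrow{\,x\,}\Lambda\to k\to 0$ of $Y=k$ lifts to $P\xrightarrow{\,x\circ-\,}P$, whose image is the one-dimensional socle of $P$; hence $X=P/\operatorname{soc}P$ is the two-dimensional module with top the simple at the vertex $[\Lambda]$, which is not projective (the indecomposable projectives are $P$, of dimension $3$, and $\Hom_\Lambda(M,k)$, whose top sits at the vertex $[k]$). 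Nevertheless $\Hom_\Gamma(P,X)\cong k\cong\Hom_\Gamma(P,\Hom_\Lambda(M,k))$: the functor $G=\Hom_\Gamma(P,-)$ annihilates every module with no composition factor in $\operatorname{top}P$, so it identifies $G(X)$ with $G$ of a projective without $X$ being projective. Consequently ``show $X\in\operatorname{proj}\Gamma$'' cannot be carried out, and the actual content of the theorem --- producing for each $Y\in\mmod\Lambda$ a \emph{projective} $\Gamma$-module $Q$ with $G(Q)\cong Y$ --- must be argued differently; this is precisely where the interplay of $\gldim\Gamma\leq 2$ and $\operatorname{dom.dim}\Gamma\geq 2$ does its real work, and it is the hard half of the correspondence. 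As written, the proposal does not establish that $\Lambda$ has finite representation type.
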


Let $\Lambda$ be an Artin $R$-algebra of finite representation type and $M$ a
representation generator of $\mmod\Lambda$. Suppose that $\Lambda$ is not
semi-simple. Then, the Artin $R$-algebra $\Gamma:=\End_\Lambda(M)$ has global
dimension $2$. Let $S$ be a simple $\Gamma$-module of projective dimension $2$
and
\begin{equation*}
  \begin{tikzcd}[column sep=small]
    0\rar&\Hom_\Lambda(M,M^{-2})\rar&\Hom_\Lambda(M,M^{-1})\rar&\Hom_\Lambda(M,M^0)\rar&S\rar&0
  \end{tikzcd}
\end{equation*}
a minimal projective resolution of $S$. By Yoneda's lemma, this resolution is
given by a left exact sequence
\begin{equation*}
  \begin{tikzcd}[column sep=small]
    0\rar&M^{-2}\rar&M^{-1}\rar&M^0
  \end{tikzcd}
\end{equation*}
of $\Lambda$-modules. It turns out that this sequence is exact and, moreover, an
almost split sequence. Conversely, an almost split sequence in $\mmod\Lambda$
corresponds to a minimal projective resolution of a simple $\Gamma$-module.

Higher Auslander--Reiten theory was introduced by Iyama in 2004 in
\cite{MR2298819} and \cite{MR2298820} (see also the survey article
\cite{MR2484730}). In addition to representation theory
\cite{MR2833569,MR2862066,MR3198750}, it has exhibited connections to
commutative algebra, commutative and non-commutative algebraic geometry, and
combinatorics, see for example
\cite{MR3357123,14090668,MR3144232,MR3251829,MR2984586}.

In complete analogy with the classical theory, higher Auslander--Reiten theory
can be motivated by the following theorem of Iyama. Let $d$ be a positive
integer. We recall that a (right) $\Lambda$-module $M$ is
\emph{$d$-cluster-tilting} if
\begin{align*}
  \add M &= \setP{X\in\mmod\Lambda}{\forall i\in\set{1,\dots,d-1}:\Ext_\Lambda^i(X,M)=0}\\
         &=\setP{Y\in\mmod\Lambda}{\forall i\in\set{1,\dots,d-1}:\Ext_\Lambda^i(M,Y)=0}.
\end{align*}
Note that a $1$-cluster-tilting $\Lambda$-module is precisely a representation
generator of $\mmod\Lambda$. The intrinsic properties of $d$-cluster-tilting
subcategories have been investigated in \cite{MR3021448,MR3519980,MR3544623}.

\begin{thm}[(Higher Auslander correspondence, see Theorem 0.2 in
  \cite{MR2298820})]
  There is a one-to-one correspondence between the equivalence classes of
  $d$-cluster-tilting modules for Artin $R$-algebras $\Lambda$ and
  Morita-equivalence classes of $d$-Auslander $R$-algebras, that is Artin
  $R$-algebras $\Gamma$ satisfying
  \begin{equation*}
    \gldim\Gamma\leq d+1\leq\operatorname{dom.dim}\Gamma
  \end{equation*}
  The correspondence is given by $M\mapsto\End_\Lambda(M)$, where $M$ is a
  $d$-cluster-tilting $\Lambda$-module.
\end{thm}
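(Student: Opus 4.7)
The plan is to establish the correspondence in both directions and then verify the two constructions are mutually inverse.

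\textbf{Forward direction.} Given a $d$-cluster-tilting module $M\in\mmod\Lambda$, set $\Gamma:=\End_\Lambda(M)$. A Yoneda-type argument shows that $F:=\Hom_\Lambda(M,-)$ restricts to an equivalence between $\add M$ and the category of finitely generated projective $\Gamma$-modules. To bound the global dimension, take $N\in\mmod\Gamma$, lift a projective presentation along $F^{-1}$ to a morphism $f:M^{-1}\to M^0$ in $\add M$, and set $X:=\coker(f)\in\mmod\Lambda$. Since $M$ is a generator and $d$-cluster-tilting, iteratively taking right $\add M$-approximations of the resulting syzygies in $\mmod\Lambda$ produces an exact sequence
\begin{equation*}
  0\to M^{-d-1}\to M^{-d}\to\cdots\to M^{-1}\to M^0\to X\to 0
\end{equation*}
with each $M^i\in\add M$. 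Applying $F$ and invoking $\Ext_\Lambda^i(M,M^j)=0$ for $1\leq i\leq d-1$ yields a projective resolution of $N$ of length at most $d+1$, so $\gldim\Gamma\leq d+1$.

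\textbf{Dominant dimension.} Here I would identify, via the standard duality $D:\mmod\Lambda\to\mmod\Lambda^{\mathrm{op}}$, the indecomposable projective-injective $\Gamma$-modules with those summands of $M$ that are injective as $\Lambda$-modules. Dually to the construction above, an injective coresolution of $\Lambda$ (which is a summand of $M$, since $M$ is a generator) can be replaced by modules in $\add M$ that are injective over $\Lambda$; applying $F$ to its first $d+1$ terms produces the beginning of a minimal injective coresolution of $\Gamma$ consisting of projective-injective $\Gamma$-modules, establishing $\operatorname{dom.dim}\Gamma\geq d+1$.

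\textbf{Backward direction.} Given $\Gamma$ satisfying $\gldim\Gamma\leq d+1\leq\operatorname{dom.dim}\Gamma$, let $Q$ be a basic additive generator of the subcategory of projective-injective $\Gamma$-modules and set $\Lambda:=\End_\Gamma(Q)$. By the dominant dimension hypothesis, the initial segment $0\to\Gamma\to I^0\to\cdots\to I^d$ of a minimal injective coresolution of $\Gamma$ consists of projective-injective modules. The functor $G:=\Hom_\Gamma(Q,-)$ restricts to an equivalence between the projective-injective $\Gamma$-modules and the finitely generated projective $\Lambda$-modules, so the direct sum $G(I^0)\oplus\cdots\oplus G(I^d)$ defines a candidate $\Lambda$-module $M$. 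I would then verify that $M$ is $d$-cluster-tilting and that $\End_\Lambda(M)\cong\Gamma$, the latter via the projective-injective generator-cogenerator structure.

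\textbf{Main obstacle.} The principal difficulty is the backward direction, where one must verify \emph{both} symmetric Ext-vanishing conditions defining $d$-cluster-tilting. One of them follows rather directly from the dominant dimension hypothesis via Yoneda applied to the truncated injective coresolution of $\Gamma$; the other requires the global dimension bound together with the duality $D$ to translate between projective resolutions in $\mmod\Gamma$ and injective coresolutions in $\mmod\Lambda^{\mathrm{op}}$. Once $M$ is in hand, checking that the two procedures are mutually inverse reduces to matching up the basic data (module on one side, algebra on the other) on both sides of the correspondence.
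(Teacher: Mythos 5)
The paper itself offers no proof of this statement: it appears in the introduction purely as motivation and is cited directly from Theorem 0.2 of \cite{MR2298820}, so your argument can only be measured against the standard proofs of Auslander and Iyama. Your forward direction is essentially that standard argument and is sound in outline: resolve the second syzygy of the cokernel of $f$ by right $\add M$-approximations, use dimension shifting together with the Ext-characterisation of $\add M$ to see that the $(d+1)$-st kernel lies in $\add M$, and apply $\Hom_\Lambda(M,-)$. One slip in the dominant dimension step: since $\Gamma=\Hom_\Lambda(M,M)$, you must take an injective coresolution of $M$, not of $\Lambda$; it is $\Hom_\Lambda(M,-)$ applied to $0\to M\to I^0\to\cdots\to I^d$ (exact because $\Ext_\Lambda^i(M,M)=0$ for $1\leq i\leq d-1$, with each $I^j\in\add M$ because $\add M$ is cogenerating) that yields the initial segment of an injective coresolution of $\Gamma$ by projective--injectives.

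The backward direction contains a genuine error. By your own preceding sentence, $G=\Hom_\Gamma(Q,-)$ carries projective--injective $\Gamma$-modules to projective $\Lambda$-modules; since $I^0,\dots,I^d$ are projective--injective, your candidate $M=G(I^0)\oplus\cdots\oplus G(I^d)$ is a \emph{projective} $\Lambda$-module. A $d$-cluster-tilting subcategory must contain $D\Lambda$, so if $\add M$ consisted of projectives then $\Lambda$ would be self-injective, the conditions $\Ext_\Lambda^i(X,M)=0$ would be vacuous, and $\add M=\mmod\Lambda$ would force $\Lambda$ to be semisimple; the construction therefore fails in all nontrivial cases. Indeed, tracing the forward direction back, the projective--injective $\Gamma$-modules are the $\Hom_\Lambda(M,I)$ with $I$ injective over $\Lambda$, and $G(\Hom_\Lambda(M,I))\cong\Hom_\Lambda(D\Lambda,I)$ is the inverse Nakayama functor applied to $I$ --- not a summand of $M$. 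The correct recovery is $M\cong\Hom_\Gamma(Q,D\Gamma)\cong DQ$ viewed as a right $\End_\Gamma(Q)$-module (equivalently $D(e\Gamma)$ over $e\Gamma e$, where $e$ is an idempotent with $e\Gamma\cong Q$). Even with that repaired, the substantive content of the backward direction --- deducing the two Ext-vanishing conditions for this $M$ from $\gldim\Gamma\leq d+1\leq\operatorname{dom.dim}\Gamma$ --- is exactly the part your outline defers, so the proposal does not yet constitute a proof.
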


Analogously, the notion of a $d$-almost split sequence can be motivated as
follows. Let $\Lambda$ be an Artin $R$-algebra and $M$ a $d$-cluster-tilting
$\Lambda$-module. Suppose that $\Lambda$ is not semi-simple. Then, the Artin
$R$-algebra $\Gamma:=\End_\Lambda(M)$ has global dimension $d+1$. Let $S$ be a
simple $\Gamma$-module of projective dimension $d+1$ and
\begin{equation*}
  \begin{tikzcd}[column sep=tiny]
    0\rar&\Hom_\Lambda(M,M^{-(d+1)})\rar&\cdots\rar&\Hom_\Lambda(M,M^{-1})\rar&\Hom_\Lambda(M,M^0)\rar&S\rar&0
  \end{tikzcd}
\end{equation*}
a minimal projective resolution of $S$. By Yoneda's lemma, this resolution is
given by a sequence
\begin{equation*}
  \begin{tikzcd}[column sep=small]
    0\rar&M^{-(d+1)}\rar&\cdots\rar&M^{-1}\rar&M^0
  \end{tikzcd}
\end{equation*}
of $\Lambda$-modules. Such a sequence serves as the prototype for the definition
of a $d$-almost split sequence.

In Section \ref{sec:preliminaries} we motivate the introduction of
$d$-cluster-tilting subcategories of $\mmod\Lambda$ by investigating exact
sequences of the form
\begin{equation*}
  \begin{tikzcd}[column sep=small]
    0\rar&L\rar&M^1\rar&\cdots\rar&M^d\rar&N\rar&0
  \end{tikzcd}
\end{equation*}
whose terms lie in a $d$-rigid subcategory $\M$ of $\mmod\Lambda$, that is such
that for all $i\in\set{1,\dots,d-1}$ the equality $\Ext_\Lambda^i(\M,\M)=0$
holds. We call such sequences \emph{$d$-exact sequences}. In Section
\ref{sec:the_defect_of_an_exact_d-sequence} we modify Krause's proof of
Auslander's defect formula \cite{MR1987342} to give a new proof of the defect
formula for $d$-exact sequences. Following Auslander, Reiten, and Smal{\o}
\cite{MR1476671}, in Section \ref{sec:morphisms_determined_by_objects} we
establish the existence of morphisms determined by objects in
$d$-cluster-tilting subcategories. Finally, in Section \ref{sec:d-almost
  split_sequences}, we prove the existence of $d$-almost split sequences in
$d$-cluster-tilting subcategories as a consequence of the existence of morphisms
determined by objects.

\subsection*{Conventions and preliminaries}

Throughout the article we fix a positive integer $d$. We also fix a commutative
artinian ring $R$ and an Artin $R$-algebra $\Lambda$. We denote by
$\mmod\Lambda$ the abelian category of finite length, equivalently finitely
presented, right $\Lambda$-modules. Let $S_1,\dots, S_n$ be a complete set of
representatives of the isomorphism classes of the simple $R$-modules, and $I$
the injective envelope of the sum $S_1\oplus S_2\oplus\cdots \oplus S_n$. Recall
that
\begin{equation*}
  \begin{tikzcd}[column sep=small]
    D:=\Hom_R(-,I)\colon\mmod\Lambda\rar&\mmod\Lambda^{\op}
  \end{tikzcd}
\end{equation*}
is a duality. We denote by $\tau$ the Auslander--Reiten translation of
$\mmod\Lambda$. All subcategories of $\mmod\Lambda$ which we consider are
assumed to be full and closed under direct sums and direct summands; in
particular, they are additive subcategories of $\mmod\Lambda$. When convenient,
we write $(-,-)$ instead of $\Hom_\Lambda(-,-)$ and $^d(-,-)$ instead of
$\Ext_\Lambda^d(-,-)$. If $M\in\mmod\Lambda$, then we denote by $\add M$ the
smallest subcategory of $\mmod\Lambda$ containing $M$ and which is closed under
direct sums and direct summands.

We freely use classical concepts such as stable module categories, (co)syzygies,
and the Auslander--Bridger transpose. We refer the reader to \cite{MR1476671}
and \cite{MR2197389} for definitions. We denote the space of morphisms $M\to N$
which factor through a projective by $\mathcal{P}(M,N)$ and the projectively
stable module category by $\underline{\mmod}\Lambda$. Dually, we denote the
injectively stable module category by $\overline{\mmod}\Lambda$. We recall that
the \emph{(Jacobson) radical} of $\mmod\Lambda$ is the two-sided ideal defined
by
\begin{equation*}
  \rad\Hom_{\Lambda}(M,N):=\setP{f\in\Hom_{\Lambda}(M,N)}{1_N-f\circ g\text{ is invertible for all }g\in\Hom_{\Lambda}(N,M)},
\end{equation*}
see \cite{MR0170922} and Appendix A.3 in \cite{MR2197389} for further details.

Recall that a morphism $f\colon L\to M$ of $\Lambda$-modules is \emph{right
  minimal} if each morphism $g\colon L\to L$ such that $f\circ g=f$ is an
isomorphism. The notion of a morphism being \emph{left minimal} is defined
dually. Clearly, if $f\colon L\to M$ is a right minimal morphism and $g\colon
M\to N$ is a monomorphism, then the composite $g\circ f$ is right minimal. The
following observation is well known.

\begin{lemma}
  \label{minimal-jacobson} Let $L\xto{g}M\xto{f}N$ be a complex in
  $\mmod\Lambda$ such that the induced sequence of $R$-modules
  \begin{equation*}
    \begin{tikzcd}[column sep=small]
      \Hom_\Lambda(M,L)\rar&\Hom_\Lambda(M,M)\rar&\Hom_\Lambda(M,N)
    \end{tikzcd}
  \end{equation*} is exact. Then, $g$ is in the Jacobson radical of $\mmod\Lambda$ if and only if $f$ is right minimal.
\end{lemma}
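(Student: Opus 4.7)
The plan is to reformulate the Jacobson radical condition concretely: in the Krull--Schmidt category $\mmod\Lambda$, a morphism $g\colon L\to M$ lies in the Jacobson radical if and only if $1_M - g\circ h$ is an automorphism of $M$ for every $h\colon M \to L$. With this reformulation in hand, both implications become short diagram chases using only that $f\circ g = 0$ (a consequence of $L\to M\to N$ being a complex) and the exactness hypothesis on the induced $\Hom$ sequence.

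For the direction that $f$ being right minimal implies $g$ lies in the Jacobson radical, I would pick an arbitrary $h\colon M \to L$, set $\varphi := 1_M - g\circ h\colon M \to M$, and observe that $f\circ g = 0$ yields $f\circ\varphi = f$; right minimality of $f$ then forces $\varphi$ to be an isomorphism, and since $h$ was arbitrary, $g$ belongs to the Jacobson radical. Conversely, assuming $g$ lies in the Jacobson radical, I would start with $\psi\colon M\to M$ satisfying $f\circ\psi = f$, so that $f\circ(1_M - \psi) = 0$; the exactness of the displayed $\Hom$ sequence then produces $h\colon M\to L$ with $g\circ h = 1_M - \psi$, hence $\psi = 1_M - g\circ h$, which the radical property of $g$ upgrades to an automorphism of $M$, as required.

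I do not anticipate a genuine obstacle. The only point requiring care is the characterization of the Jacobson radical recalled at the outset, which is where the Krull--Schmidt structure of $\mmod\Lambda$ enters; once that reformulation is accepted, the two implications are essentially the same computation read in opposite directions.
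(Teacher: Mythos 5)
Your proof is correct and is essentially the paper's argument: both directions rest on the characterization that $g$ lies in the radical precisely when $1_M \pm g\circ h$ is invertible for every $h\colon M\to L$, with the exactness hypothesis supplying the factorization $1_M-\psi = g\circ h$ in one direction and $f\circ g=0$ supplying $f\circ\varphi=f$ in the other. One small remark: that characterization is just the definition of the Jacobson radical of an additive category (equivalently, of the radical of the ring $\End_\Lambda(L\oplus M)$), so no appeal to the Krull--Schmidt property of $\mmod\Lambda$ is actually needed.
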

\begin{proof}
  Suppose that $g$ is in the Jacobson radical of $\mmod\Lambda$. Let $h\colon
  M\to M$ be a morphism such that $f\circ h=f$. Given that $f\circ(h-1_M)=0$,
  the morphism $h-1_M$ factors through $g$, say as $g\circ i$. Since $g$ is in
  the Jacobson radical of $\mmod\Lambda$ the morphism $h=1_M+g\circ i$ is
  invertible, which is what we needed to show.

  Conversely, suppose that $f$ is right minimal. Let $i\colon M\to L$ be a
  morphism and set $h:=1_M+g\circ i$. Then, $f\circ h=f+f\circ(g\circ i)=f$.
  Since $f$ is right minimal, $h$ is an isomorphism. This shows that $g$ belongs
  to the Jacobson radical of $\mmod\Lambda$.
\end{proof}

Recall that a subcategory $\X$ of $\mmod\Lambda$ is \emph{contravariantly
  finite} if for every $M\in\mmod\Lambda$ there exists $X\in\X$ and a morphism
$f\colon X\to M$ such that for all $X'\in\X$ the induced sequence of $R$-modules
\begin{equation*}
  \begin{tikzcd}[column sep=small]
    \Hom_\Lambda(X',X)\rar&\Hom_\Lambda(X',M)\rar&0
  \end{tikzcd}
\end{equation*}
is exact. We call such a morphism $f$ a \emph{right $\X$-approximation of $M$}.
A right $\X$-approximation of $M$ is \emph{minimal} if it is a right minimal
morphism. We define \emph{covariantly finite subcategories} and \emph{(minimal)
  left approximations} dually. We say that $\X$ is \emph{functorially finite} if
it is both a contravariantly finite and a covariantly finite subcategory of
$\mmod\Lambda$.

A subcategory $\X$ of $\mmod\Lambda$ is said to be \emph{generating} if
$\Lambda\in\X$ (we remind the reader of our conventions on subcategories).
Similarly, $\X$ is \emph{cogenerating} if $D\Lambda\in\X$. Note that if $\X$ is
a generating contravariantly finite subcategory of $\mmod\Lambda$, then every
right $\X$-approximation $f\colon X\to M$ of $M$ is an epimorphism since the
projective cover of $M$ factors through $f$; dually, if $\X$ is a cogenerating
covariantly finite subcategory of $\mmod\Lambda$, then every left
$\X$-approximation is a monomorphism. We use these facts freely in the sequel.

\section{$d$-cluster-tilting subcategories}
\label{sec:preliminaries}

In this section we motivate the study of $d$-cluster-tilting subcategories of
$\mmod\Lambda$ from the viewpoint of $d$-homological algebra. For this, we first
introduce the notion of a $d$-exact sequence in a subcategory and establish
their basic properties.

\subsection{$d$-exact sequences}

We begin by investigating the properties of
exact sequences of length $d+2$ in $d$-rigid
subcategories of $\mmod\Lambda$.

\begin{definition}
  We say that a subcategory $\M$ of $\mmod\Lambda$ is \emph{$d$-rigid} if for
  each $i\in\set{1,\dots,d-1}$ the equality $\Ext_\Lambda^i(\M,\M)=0$ holds.
\end{definition}

Note that, by definition, if $d\geq2$ all short exact sequences in a $d$-rigid
subcategory are split. Nevertheless, the following simple observation motivates
the investigation of the properties of exact sequences of length $d+2$ in such
subcategories.

\begin{proposition}[(\cf Lemma 3.5 in \cite{MR2735750})]
  \label{d-rigid_long_exact} Let $\M$ be a $d$-rigid subcategory of
  $\mmod\Lambda$ and
  \begin{equation*}
    \begin{tikzcd}[column sep=small]
      0\rar&L\rar&M^1\rar&\cdots\rar&M^d\rar&N\rar&0
    \end{tikzcd}
  \end{equation*}
  an exact sequence in $\mmod\Lambda$ whose terms all lie in $\M$. Then, for
  each $X,Y\in\M$ there are exact sequences in $\mmod R$
  \begin{equation*}
    \begin{tikzcd}[column sep=tiny, row sep=tiny, ampersand
      replacement=\&]
      0\rar\&(X,L)\rar\&(X,M^1)\rar\&\cdots\rar\&(X,M^d)\rar\&(X,N)\rar\&^d(X,L)\rar\&^d(X,M^1)
    \end{tikzcd}
  \end{equation*}
  and
  \begin{equation*}
    \begin{tikzcd}[column sep=tiny, row sep=tiny, ampersand
      replacement=\&]
      0\rar\&(N,Y)\rar\&(M^d,Y)\rar\&\cdots\rar\&(M^1,Y)\rar\&(L,Y)\rar\&^d(N,Y)\rar\&^d(M^d,Y).
    \end{tikzcd}
  \end{equation*}
\end{proposition}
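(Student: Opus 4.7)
The plan is to splice the $d$-term exact sequence into short exact sequences and chase the associated long exact sequences of $\Ext$-groups, exploiting $d$-rigidity to perform dimension shifts. Setting $K^0 := L$, $K^d := N$, and $K^i := \operatorname{im}(M^i \to M^{i+1})$ for $1 \leq i \leq d-1$, we obtain short exact sequences $0 \to K^{i-1} \to M^i \to K^i \to 0$ for $i = 1, \ldots, d$. Applying $\Hom_\Lambda(X,-)$ to each yields a long exact sequence in which the terms $\Ext_\Lambda^j(X, M^i) = 0$ for $1 \leq j \leq d-1$ vanish by $d$-rigidity (since $X, M^i \in \M$). The dual exact sequence involving $\Hom_\Lambda(-, Y)$ is proved by the same argument applied to the opposite category, so I focus on the first.

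The technical core is the following vanishing, proved by induction on $i$: for $0 \leq i \leq d-1$, one has $\Ext_\Lambda^j(X, K^i) = 0$ for $1 \leq j \leq d-1-i$. The base case $i = 0$ is $d$-rigidity applied to $L \in \M$, and the inductive step follows from the long exact sequence associated to $0 \to K^i \to M^{i+1} \to K^{i+1} \to 0$, using the vanishing of $\Ext_\Lambda^j(X, M^{i+1})$ in the relevant range. In particular $\Ext_\Lambda^1(X, K^i) = 0$ for $0 \leq i \leq d-2$, so the connecting maps vanish and we obtain short exact sequences $0 \to (X, K^{i-1}) \to (X, M^i) \to (X, K^i) \to 0$ for $i = 1, \ldots, d-1$. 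Splicing these yields exactness of $0 \to (X, L) \to (X, M^1) \to \cdots \to (X, M^{d-1}) \to (X, K^{d-1}) \to 0$.

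For the tail, the short exact sequence $0 \to K^{d-1} \to M^d \to N \to 0$ together with $\Ext_\Lambda^1(X, M^d) = 0$ gives the exact sequence $0 \to (X, K^{d-1}) \to (X, M^d) \to (X, N) \to \Ext_\Lambda^1(X, K^{d-1}) \to 0$. Iterating the dimension shifts $\Ext_\Lambda^j(X, K^i) \cong \Ext_\Lambda^{j+1}(X, K^{i-1})$ (valid for $1 \leq j \leq d-2$, obtained from the long exact sequences and the vanishing of $\Ext_\Lambda^j(X, M^i)$) identifies $\Ext_\Lambda^1(X, K^{d-1})$ with $\Ext_\Lambda^{d-1}(X, K^1)$, and the long exact sequence for $0 \to L \to M^1 \to K^1 \to 0$ then identifies this in turn with $\ker(\Ext_\Lambda^d(X, L) \to \Ext_\Lambda^d(X, M^1))$. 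Splicing the pieces yields the first exact sequence claimed. The main subtlety is the index bookkeeping needed to keep every dimension shift inside the range permitted by $d$-rigidity; the degenerate case $d = 1$ collapses to the standard long exact sequence of a short exact sequence.
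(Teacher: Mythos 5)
Your proof is correct and follows essentially the same route as the paper's: both splice the long exact sequence into short exact sequences at the images $K^i$, use $d$-rigidity to kill the intermediate $\Ext$-groups, and dimension-shift to identify $\Ext^1_\Lambda(X,K^{d-1})$ with the kernel of $\Ext^d_\Lambda(X,L)\to\Ext^d_\Lambda(X,M^1)$. The only difference is presentational: the paper cites a remark on $F$-acyclic objects for exactness at the middle terms, where you carry out the inductive vanishing $\Ext^j_\Lambda(X,K^i)=0$ explicitly.
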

\begin{proof}
  We only construct the first sequence, the second sequence can be constructed
  dually. Since $\M$ is a $d$-rigid subcategory of $\mmod\Lambda$, for each
  $i\in\set{2,\dots,d-1}$ the cohomology of the complex
  \begin{equation*}
    \begin{tikzcd}[column sep=tiny]
      0\rar&\rar\Hom_\Lambda(X,L)\rar&\Hom_\Lambda(X,M^1)\rar&\cdots\rar&\Hom_\Lambda(X,M^d)\rar&\Hom_\Lambda(X,N)
    \end{tikzcd}
  \end{equation*}
  at $\Hom_\Lambda(X,M^i)$ is isomorphic to $\Ext_\Lambda^{i-1}(X,L)$ which
  vanishes by assumption (\cf remark 2.4.3 in \cite{MR1269324} in which
  $F$-acylic objects are discussed).

  Consider a short exact sequence $0\to K\to M^d\to N\to 0$ extending the map
  $M^d\to N$. Applying the functor $\Hom_\Lambda(X,-)$ yields an exact sequence
  \begin{equation*}
    \begin{tikzcd}[column sep=tiny]
      0\rar&\Hom_\Lambda(X,K)\rar&\Hom_\Lambda(X,M^d)\rar&\Hom_\Lambda(X,N)\rar&\Ext_\Lambda^1(X,K)\rar&0.
    \end{tikzcd}
  \end{equation*}
  Similarly, let $0\to L\to M^1\to K'\to 0$ be a short exact sequence extending
  the map $L\to M^1$. Applying the functor $\Hom_\Lambda(X,-)$ yields an exact
  sequence
  \begin{equation*}
    \begin{tikzcd}[column sep=small]
      0\rar&\Ext_\Lambda^{d-1}(X,K')\rar&\Ext_\Lambda^d(X,L)\rar&\Ext_\Lambda^d(X,M^1).
    \end{tikzcd}
  \end{equation*}
  By the dimension-shifting argument (see for example Exercise 2.4.3 in
  \cite{MR1269324}) there is an isomorphism between $\Ext_\Lambda^{d-1}(X,K')$
  and $\Ext_\Lambda^1(X,K)$. Thus, we obtain an exact sequence
  \begin{equation*}
    \begin{tikzcd}[column sep=tiny, row sep=small]
      \Hom_\Lambda(X,M^d)\rar&\Hom_\Lambda(X,N)\drar[two heads]\ar{rr}&&\Ext_\Lambda^d(X,L)\rar&\Ext_\Lambda^d(X,M^1).\\
      &&\Ext_\Lambda^1(X,K)\urar[tail]
    \end{tikzcd}
  \end{equation*}
  The claim follows.
\end{proof}

The study of the following class of exact sequences is motivated by
\ref{d-rigid_long_exact}.

\begin{definition}[(see Definition 2.2 and Definition 2.4 in \cite{MR3519980})]
  \label{d-exact_sequence} Let $\M$ be a subcategory of $\mmod\Lambda$. A
  complex
  \begin{equation*}
    \begin{tikzcd}[column sep=small]
      L\rar&M^1\rar&\cdots\rar&M^d\rar&N
    \end{tikzcd}
  \end{equation*}
  in $\M$ is a \emph{left $d$-exact sequence\footnote{We borrow this terminology
      from \cite{14092948}.} in $\M$} if for every $X\in\M$ the induced sequence
  of $R$-modules
  \begin{equation*}
    \begin{tikzcd}[column sep=small, row sep=tiny]
      0\rar&\Hom_\Lambda(X,L)\rar&\Hom_\Lambda(X,M^1)\rar&\cdots\rar&\Hom_\Lambda(X,M^d)\rar&\Hom_\Lambda(X,N)
    \end{tikzcd}
  \end{equation*}
  is exact. A \emph{right $d$-exact sequence} is defined dually. A
  \emph{$d$-exact sequence} is a sequence which is both a right $d$-exact
  sequence and a left $d$-exact sequence.
\end{definition}

\begin{remark}
  The reader should compare \ref{d-exact_sequence} with Definition 2.1 in
  \cite{MR2735750}.
\end{remark}

\begin{remark}
  Let $\M$ be a subcategory of $\mmod\Lambda$ and
  \begin{equation*}
    \begin{tikzcd}[column sep=small]
      L\rar{f}&M^1\rar&\cdots\rar&M^d\rar{g}&N
    \end{tikzcd}
  \end{equation*}
  a $d$-exact sequence in $\M$. By \ref{minimal-jacobson}, the fact that $f$ is
  left minimal is equivalent to the morphism $M^1\to M^2$ being in the Jacobson
  radical of $\mmod\Lambda$. Dually, $g$ is right minimal if and only if
  $M^{d-1}\to M^d$ belong to the Jacobson radical of $\mmod\Lambda$.
\end{remark}

According to \ref{d-rigid_long_exact}, exact sequences of length $d+2$ whose
terms all lie in a $d$-rigid subcategory of $\mmod\Lambda$ are $d$-exact
sequences in this subcategory. In fact, the following partial converse hold.

\begin{proposition}[(\cf Lemma 5.1 in \cite{MR3544623})]
  \label{exact_d-sequences_are_exact} Let $\M$ be a generating subcategory of
  $\mmod\Lambda$ and
  \begin{equation*}
    \begin{tikzcd}[column sep=small]
      \delta:&0\rar&L\rar&M^1\rar&\cdots\rar&M^d\rar&N
    \end{tikzcd}
  \end{equation*}
  a left $d$-exact sequence in $\M$. Then, $\delta$ is an exact sequence in
  $\mmod\Lambda$.
\end{proposition}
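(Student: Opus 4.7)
The plan is to exploit the generating hypothesis directly: because $\M$ contains $\Lambda$, the regular module is available as a test object in the definition of a left $d$-exact sequence. I would therefore instantiate the defining condition at $X=\Lambda$, obtaining exactness in $\mmod R$ of the sequence starting $0\to\Hom_\Lambda(\Lambda,L)\to\Hom_\Lambda(\Lambda,M^1)\to\cdots\to\Hom_\Lambda(\Lambda,M^d)\to\Hom_\Lambda(\Lambda,N)$. Under the natural isomorphism $\Hom_\Lambda(\Lambda,-)\cong\mathrm{id}_{\mmod\Lambda}$, this sequence is, as a complex of $R$-modules (equivalently, of abelian groups), identified with the underlying sequence of $\delta$. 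Since exactness in $\mmod\Lambda$ can be tested on underlying $R$-modules, the displayed exactness translates immediately into exactness of $\delta$ at each of $L, M^1, \ldots, M^d$, which is precisely the assertion.

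Two small remarks on the structure are worth recording. First, the $d$-rigidity hypothesis on $\M$ plays no direct role in this argument; it is included presumably to match the hypotheses of Proposition \ref{d-rigid_long_exact}, of which the present result is advertised as a partial converse. Second, the argument does not force $M^d\to N$ to be an epimorphism, and it is not meant to: the statement only concerns exactness at the interior terms of $\delta$, which terminates at $N$ without a trailing zero.

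I do not anticipate any technical obstacle. The proof reduces to a one-line instantiation of the definition of left $d$-exactness, combined with the standard identification $\Hom_\Lambda(\Lambda,Y)\cong Y$ and the fact that exactness of a complex of $\Lambda$-modules is detected after forgetting to $R$-modules. No dimension-shifting, diagram chase, or cokernel manipulation is required, in contrast to the proof of Proposition \ref{d-rigid_long_exact}.
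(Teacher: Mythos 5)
Your proof is correct and is essentially identical to the paper's own argument: both instantiate the left $d$-exactness condition at $X=\Lambda$ (available since $\M$ is generating) and transport exactness through the natural isomorphism $\Hom_\Lambda(\Lambda,-)\cong\mathrm{id}$. Your side remarks (that $d$-rigidity is not used and that no surjectivity of $M^d\to N$ is claimed) are also accurate.
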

\begin{proof}
  The proof of Lemma 5.1 in \cite{MR3544623} carries over. Indeed, there is an
  isomorphism of complexes of $\Lambda$-modules
  \begin{equation*}
    \begin{tikzcd}[column sep=tiny]
      0\rar&\Hom_\Lambda(\Lambda,L)\rar\dar{\wr}&\Hom_\Lambda(\Lambda,M^1)\rar\dar{\wr}&\cdots\rar&\Hom_\Lambda(\Lambda,M^d)\rar\dar{\wr}&\Hom_\Lambda(\Lambda,N)\dar{\wr}\\
      0\rar&L\rar&M^1\rar&\cdots\rar&M^d\rar&N
    \end{tikzcd}
  \end{equation*}
  whose top row is exact by assumption.
\end{proof}

Recall that a morphism $f^\bullet\colon X^\bullet\to Y^\bullet$ is
\emph{null-homotopic} if there exists a family of morphisms $\setP{h^i\colon
  X^i\to Y^{i-1}}{i\in\ZZ}$ such that for each $i\in\ZZ$ the equation
\begin{equation*}
  f^i=h^{i+1}\circ d_X^i+d_Y^{i-1}\circ h^i
\end{equation*}
is satisfied. We need the following easy result which is analogous to the
comparison lemma for projective resolutions, see for example Theorem 2.2.6 in
\cite{MR1269324}.

\begin{lemma}
  \label{comparison_lemma} Let
  \begin{equation*}
    \begin{tikzcd}
      X^\bullet\dar{f^\bullet}&\cdots\rar&X^{-2}\rar\dar{f^{-2}}&X^{-1}\rar\dar{f^{-1}}&X^0\dar{0}\\
      Y^\bullet&\cdots\rar&Y^{-2}\rar&Y^{-1}\rar&Y^0
    \end{tikzcd}
  \end{equation*}
  be a morphism of complexes of $\Lambda$-modules such that for all $i\leq-1$
  the sequence of $R$-modules
  \begin{equation*}
    \begin{tikzcd}[column sep=tiny]
      \Hom_\Lambda(X^i,Y^{i-1})\rar&\Hom_\Lambda(X^i,Y^i)\rar&\Hom_\Lambda(X^i,Y^{i+1})
    \end{tikzcd}
  \end{equation*}
  is exact. Then, there exists a null-homotopy $h^\bullet$ of $f^\bullet$ such
  that the morphism $h^0\colon X^0\to Y^{-1}$ is the zero morphism.
\end{lemma}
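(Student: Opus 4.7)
The plan is to construct the null-homotopy $h^\bullet$ inductively, increasing $i$ from $0$ upwards; this is the $d$-analogue of the classical comparison lemma for projective resolutions, with the exactness of $\Hom_\Lambda(X^i,-)$ on the relevant three-term stretch of $Y^\bullet$ playing the role that projectivity of $X^i$ plays in the classical argument.

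For the base case I set $h^0 := 0 \colon X^0 \to Y^1$, which is permissible because the morphism $X^0 \to Y^0$ was assumed to be zero; this also secures the additional property asserted in the conclusion. For the inductive step, assume that $h^0, \ldots, h^{i-1}$ have been constructed so that $f^j = d_Y^{j+1} \circ h^j + h^{j-1} \circ d_X^j$ for every $0 \leq j \leq i-1$ (with the convention $h^{-1} := 0$). I must produce $h^i \colon X^i \to Y^{i+1}$ satisfying $d_Y^{i+1} \circ h^i = f^i - h^{i-1} \circ d_X^i$. The exactness hypothesis at $\Hom_\Lambda(X^i, Y^i)$ delivers such an $h^i$ as soon as I verify that $f^i - h^{i-1} \circ d_X^i$ has zero image in $\Hom_\Lambda(X^i, Y^{i-1})$.

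This verification is the single computation the proof requires. Using that $f^\bullet$ is a chain map, one has $d_Y^i \circ f^i = f^{i-1} \circ d_X^i$; substituting the inductive hypothesis $f^{i-1} = d_Y^i \circ h^{i-1} + h^{i-2} \circ d_X^{i-1}$ collapses $d_Y^i \circ (f^i - h^{i-1} \circ d_X^i)$ to $h^{i-2} \circ d_X^{i-1} \circ d_X^i$, which vanishes because $X^\bullet$ is a complex.

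I do not anticipate any genuine obstacle. The only point requiring attention is the start of the recursion at $i = 1$, where the formally meaningless term $h^{-1} \circ d_X^1$ would appear; the hypothesis $f^0 = 0$, combined with the choice $h^0 = 0$, is precisely what is needed to absorb this boundary case and let the induction proceed uniformly.
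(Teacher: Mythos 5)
Your proof is correct: the induction on $i$, with $h^0:=0$ absorbing the base case via the hypothesis $f^0=0$ and the exactness of $\Hom_\Lambda(X^i,-)$ on the three-term stretch of $Y^\bullet$ supplying the lift at each step, is exactly the standard comparison-lemma argument. The paper does not write this out but simply cites the Comparison Lemma 2.1 in \cite{MR3519980}, whose proof is the same induction, so your approach matches the intended one.
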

\begin{proof}
  See for example the Comparison Lemma 2.1 in \cite{MR3519980}.
\end{proof}

We recall that a complex $X^\bullet$ of $\Lambda$-modules is \emph{contractible}
if its identity morphism is null-homotopic. The following straightforward
result, well known for short exact sequences, is an important property of
$d$-exact sequences.

\begin{proposition}[(see Proposition 2.6 \cite{MR3519980})]
  \label{contractible_iff_split_morphisms} Let $\M$ be a subcategory of
  $\mmod\Lambda$ and
  \begin{equation*}
    \begin{tikzcd}[column sep=small]
      \delta\colon&0\rar&L\rar{f}&M^1\rar&\cdots\rar&M^d\rar{g}&N\rar&0
    \end{tikzcd}
  \end{equation*}
  a $d$-exact sequence in $\M$. Then, the following statements are equivalent.
  \begin{enumerate}
  \item\label{it:contractible} $\delta$ is a contractible complex.
  \item\label{it:split_mono} $f$ is a split monomorphism.
  \item\label{it:split_epi} $g$ is a split epimorphism.
  \end{enumerate}
\end{proposition}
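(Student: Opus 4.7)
The implications \ref{it:contractible} $\Rightarrow$ \ref{it:split_mono} and \ref{it:contractible} $\Rightarrow$ \ref{it:split_epi} are essentially immediate: a null-homotopy $h^\bullet$ of $\mathrm{id}_\delta$ satisfies the identity $\mathrm{id} = dh + hd$ component-wise, and evaluating this identity at the terminal terms $L$ and $N$ of $\delta$ directly produces a retraction of $f$ and a section of $g$, respectively.

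For the non-trivial direction \ref{it:split_epi} $\Rightarrow$ \ref{it:contractible}, the plan is, given a section $\sigma\colon N \to M^d$ of $g$, to build a null-homotopy of $\mathrm{id}_\delta$ by backward induction. Setting $h^{d+1} := \sigma$, at each stage one has to produce $h^k\colon M^k \to M^{k-1}$ satisfying $d^{k-1} h^k = 1_{M^k} - h^{k+1} d^k$. A short verification using $d^k d^{k-1} = 0$ together with the inductive hypothesis shows that the right-hand side is annihilated by postcomposition with $d^k$, and the left $d$-exactness of $\delta$ applied to $X = M^k \in \M$, read as the exactness of
\begin{equation*}
  \Hom_\Lambda(M^k, M^{k-1}) \to \Hom_\Lambda(M^k, M^k) \to \Hom_\Lambda(M^k, M^{k+1}),
\end{equation*}
then produces the required lift. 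The same argument can alternatively be organized via the comparison lemma \ref{comparison_lemma}: one applies it to the chain map $\mathrm{id}_\delta - \phi^\bullet$, where $\phi^\bullet$ is the explicit chain endomorphism of $\delta$ with $\phi^{d+1} = \mathrm{id}_N$, $\phi^d = \sigma g$, and $\phi^i = 0$ otherwise, and then combines the resulting null-homotopy with the visible null-homotopy of $\phi^\bullet$ given by $\sigma$ sitting in degree $d+1$.

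The main subtlety I anticipate lies in the very last step of the induction, where $h^1\colon M^1 \to L$ must satisfy not only $f h^1 = 1_{M^1} - h^2 d^1$ but also the leftover homotopy identity $h^1 f = 1_L$. The first equation is obtained as above using the exactness of $\Hom_\Lambda(M^1, \delta)$ at $\Hom_\Lambda(M^1, M^1)$. For the second, the plan is to precompose the just-established equation with $f$ and use $d^1 f = 0$ to obtain $f(h^1 f - 1_L) = 0$, and then to appeal to the injectivity of $(f \circ -)\colon \Hom_\Lambda(L, L) \to \Hom_\Lambda(L, M^1)$, which is again provided by the left $d$-exactness of $\delta$, now applied to $X = L \in \M$. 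Finally, the implication \ref{it:split_mono} $\Rightarrow$ \ref{it:contractible} will follow by the dual argument, taking a retraction of $f$ as the seed and using the right $d$-exactness of $\delta$ to propagate it through the complex.
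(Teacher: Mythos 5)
Your proposal is correct and takes essentially the same route as the paper: the paper settles the implication from \ref{it:split_epi} to \ref{it:contractible} by observing that the splitting of $g$ makes $\Hom_\Lambda(X,\delta)$ exact also at $\Hom_\Lambda(X,N)$ and then invoking the comparison lemma \ref{comparison_lemma}, whereas you carry out by hand exactly the backward induction that proves that lemma, including the correct treatment of the endpoint identity $h^1\circ f=1_L$ via the injectivity of $\Hom_\Lambda(L,f)$. The remaining implications are handled as in the paper (the null-homotopy identity at the end terms gives \ref{it:contractible}$\Rightarrow$\ref{it:split_mono},\ref{it:split_epi}, and duality gives \ref{it:split_mono}$\Rightarrow$\ref{it:contractible}).
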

\begin{proof}
  It is clear that \ref{it:contractible} implies both \ref{it:split_mono} and
  \ref{it:split_epi}. We only show that \ref{it:split_epi} implies
  \ref{it:contractible}. Since $g$ is a split epimorphism, for all $X\in\M$ the
  complex
  \begin{equation*}
    \begin{tikzcd}[column sep=tiny]
      0\rar&\Hom_\Lambda(X,L)\rar&\Hom_\Lambda(X,M^1)\rar&\cdots\rar&\Hom_\Lambda(X,M^d)\rar&\Hom_\Lambda(X,N)\rar&0
    \end{tikzcd}
  \end{equation*}
  is exact. In particular, the morphism of complexes
  \begin{equation*}
    \begin{tikzcd}
      0\dar\rar&L\rar{f}\dar{1}&M^1\rar\dar{1}&\cdots\rar&M^d\rar{g}\dar{1}&N\rar\dar{1}&0\dar\\
      0\rar&L\rar{f}&M^1\rar&\cdots\rar&M^d\rar{g}&N\rar&0
    \end{tikzcd}
  \end{equation*} satisfies the hypothesis in \ref{comparison_lemma} and, therefore, the identity morphism of $\delta$ is null-homotopic.
\end{proof}

Pullback and pushout diagrams play an important role in homological algebra. We
consider the following higher analogues of these concepts.

\begin{definition}(see Definition 2.11 in \cite{MR3519980})
  \label{pullback} Let $\M$ be a subcategory of $\mmod\Lambda$. A morphism of
  complexes of $\Lambda$-modules of the form
  \begin{equation*}
    \begin{tikzcd}
      M^0\rar{f^0}\dar{u^0}&M^1\rar{f^1}\dar{u^1}&\cdots\rar{f^{d-1}}&M^d\dar{u^d}\\
      N^0\rar{g^0}&N^1\rar{g^1}&\cdots\rar{g^{d-1}}&N^d
    \end{tikzcd}
  \end{equation*}
  is a \emph{$d$-pullback diagram in $\M$} (resp. \emph{$d$-pushout diagram in
    $\M$}) if the mapping cone
  \begin{equation*}
    \begin{tikzcd}[column sep=huge, ampersand replacement=\&]
      M^0\rar{\begin{bmatrix} -f^0\\u^0
        \end{bmatrix}}\&M^1\oplus N^0\rar{\begin{bmatrix} -f^1&0\\u^1&g^0
        \end{bmatrix}}\&\cdots\rar{\begin{bmatrix} -f^{d-1}&0\\u^{d-1}&g^{d-2}
        \end{bmatrix}}\&M^d\oplus N^{d-1}\rar{\begin{bmatrix} u^d&g^{d-1}
        \end{bmatrix}}\&N^d
    \end{tikzcd}
  \end{equation*}
  is a left $d$-exact sequence (resp. right $d$-exact sequence) in $\M$.
\end{definition}

\begin{remark}
  Let $\M$ be a subcategory of $\mmod\Lambda$. The reader can readily verify
  that a morphism of complexes of $\Lambda$-modules of the form
  \begin{equation*}
    \begin{tikzcd}[column sep=small]
      N^0\rar\dar&\cdots\rar&N^{i-1}\rar\dar&N^i\rar\dar&0\rar\dar&\cdots\rar&0\rar\dar&0\dar\\
      0\rar&\cdots\rar&0\rar&N^{i+1}\rar&N^{i+2}\rar&\cdots\rar&N^{d+1}\rar&N^{d+2}
    \end{tikzcd}
  \end{equation*}
  is a $d$-pullback diagram (resp. $d$-pullback) in $\M$ if and only if
  \begin{equation*}
    \begin{tikzcd}[column sep=small]
      N^0\rar&\cdots\rar&N^d\rar&N^{d+1}\rar&N^{d+2}
    \end{tikzcd}
  \end{equation*}
  is a left $d$-exact sequence (resp. right $d$-exact sequence) in $\M$.
\end{remark}

The following result is analogous to the classical statement that pullbacks
preserve kernels.

\begin{proposition}[(\cf Proposition 4.8 \cite{MR3519980})]
  \label{d-pullback_left_exact_d-sequence} Let $\M$ be a subcategory of
  $\mmod\Lambda$ and consider a diagram
  \begin{equation*}
    \begin{tikzcd}
      &U^1\rar\dar&\cdots\rar&U^d\rar\dar&V\dar\\
      L\rar&M^1\rar&\cdots\rar&M^d\rar&N
    \end{tikzcd}
  \end{equation*}
  where the rectangle is a $d$-pullback diagram in $\M$ and whose bottom row is
  a left $d$-exact sequence. Then, the diagram above extends uniquely to a
  morphism of complexes of the form
  \begin{equation*}
    \begin{tikzcd}
      L\dar[equals]\rar&U^1\rar\dar&\cdots\rar&U^d\rar\dar&V\dar\\
      L\rar&M^1\rar&\cdots\rar&M^d\rar&N
    \end{tikzcd}
  \end{equation*}
  whose top row is also a left $d$-exact sequence.
\end{proposition}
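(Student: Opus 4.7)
The strategy has two parts: first, I use the defining property of the $d$-pullback---that the mapping cone of the given morphism of complexes is left $d$-exact in $\M$---to construct the morphism $\phi\colon L\to U^1$ completing the diagram; second, I deduce left $d$-exactness of the resulting extended top row by comparing the cohomologies of $\Hom_\Lambda(X,-)$ applied to the top and bottom rows via the same cone.

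For the construction, denote the given morphism of complexes by $f\colon A^\bullet\to B^\bullet$, with $A^\bullet=(U^1\xto{\alpha^1}\cdots\xto{\alpha^d}V)$ and $B^\bullet=(M^1\xto{\beta^1}\cdots\xto{\beta^d}N)$ placed in degrees $[0,d]$ with vertical components $\gamma^i$, and let $\beta^0\colon L\to M^1$ denote the first map of the bottom row. Applying $\Hom_\Lambda(L,-)$ to the (left $d$-exact) cone $C(f)^\bullet$ produces an exact sequence of $R$-modules. The element $(0,\beta^0)\in\Hom_\Lambda(L,U^2\oplus M^1)$ is a cycle in this complex: its image under the second cone differential reduces to $\beta^1\circ\beta^0=0$. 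By exactness it lifts to a unique $\phi\in\Hom_\Lambda(L,U^1)$ whose image under the first cone differential $\begin{bmatrix}-\alpha^1\\\gamma^1\end{bmatrix}$ equals $(0,\beta^0)$. This single equation encodes both $\gamma^1\phi=\beta^0$ (so the new left square commutes) and $\alpha^1\phi=0$ (so the extended top row is a complex), and uniqueness of $\phi$---hence of the entire extension, which is determined by $\phi$---follows from the injectivity of $\Hom_\Lambda(L,U^1)\to\Hom_\Lambda(L,U^2\oplus M^1)$, which is the leftmost exactness of the cone's Hom complex.

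To verify left $d$-exactness of the extended top row, fix $X\in\M$ and apply $\Hom_\Lambda(X,-)$ to the standard short exact sequence of complexes $0\to B^\bullet\to C(f)^\bullet\to A^\bullet[1]\to 0$. The hypothesis on $C(f)^\bullet$ forces the vanishing of $H^k(\Hom_\Lambda(X,C(f)^\bullet))$ in degrees $-1,0,\dots,d-1$, so the resulting long exact sequence produces isomorphisms $H^k(\Hom_\Lambda(X,A^\bullet))\cong H^k(\Hom_\Lambda(X,B^\bullet))$ for $k=0,\dots,d-1$, induced by the vertical maps $\gamma^{k+1}_*$. Left $d$-exactness of the extended bottom row then forces $H^k(\Hom_\Lambda(X,B^\bullet))=0$ for $k\geq 1$ (yielding exactness at $(X,U^2),\ldots,(X,U^d)$ for free), and identifies $\ker(\beta^1_*)\cong\Hom_\Lambda(X,L)$ via $\beta^0_*$. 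Combined with the relation $\gamma^1\phi=\beta^0$, one concludes that $\phi_*\colon\Hom_\Lambda(X,L)\to\ker(\alpha^1_*)$ is itself an isomorphism, yielding simultaneously the injectivity of $\phi_*$ and the exactness at $(X,U^1)$.

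The main bookkeeping hurdle is pinning down the connecting homomorphisms in the long exact sequence precisely enough to identify the induced isomorphism $\ker(\alpha^1_*)\cong\Hom_\Lambda(X,L)$ with $\phi_*$ rather than some twist; a more elementary but lengthier alternative is a direct diagram chase at each position, using the bottom row's exactness to lift $\gamma^i\psi$ (for $\psi\in\ker(\alpha^i_*)$) to some $m\in\Hom_\Lambda(X,M^{i-1})$ with $\beta^{i-1}m=\gamma^i\psi$, assembling $(\psi,-m)$ into a cycle of the cone, and then applying the cone's exactness to produce a preimage under $\alpha^{i-1}_*$.
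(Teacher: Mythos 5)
Your proposal is correct and follows essentially the same route as the paper: both construct $\phi\colon L\to U^1$ by lifting $(0,\beta^0)$ through the first differential of the cone (with uniqueness from the injectivity of $\Hom_\Lambda(L,U^1)\to\Hom_\Lambda(L,U^2\oplus M^1)$), and both obtain exactness of $\Hom_\Lambda(X,U^\bullet)$ at the middle positions from the degreewise-split short exact sequence of complexes involving the mapping cone --- the paper phrases this as a diagram chase where you invoke the long exact cohomology sequence. The only cosmetic difference is at the leftmost position: the paper verifies directly that $\phi$ has the relative kernel property, while you identify $H^0$ of the two rows via the connecting maps; both work.
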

\begin{proof}
  Consider the standard sequence in the abelian category of complexes of
  $\Lambda$-modules
  \begin{equation*}
    \begin{tikzcd}[column sep=small]
      U^\bullet\dar{\varphi}& 0\rar\dar&U^1\rar\dar&U^2\rar\dar&\cdots\rar&U^d\rar\dar&V\dar\\
      M^\bullet\dar& 0\rar\dar&M^1\rar\dar&M^2\rar\dar&\cdots\rar&M^d\rar\dar&N\dar\\
      C^\bullet(\varphi)\dar& U^1\rar\dar&U^2\oplus M^1\rar\dar&U^3\oplus M^2\dar\rar&\cdots\rar&V\oplus M^d\rar\dar&N\dar\\
      U^\bullet[1]& U^1\rar&U^2\rar&U^3\rar&\cdots\rar&V\rar&0.
    \end{tikzcd}
  \end{equation*}
  Let $M\in\M$. Then, applying the functor $\Hom_\Lambda(M,-)$ to this diagram
  yields a commutative diagram
  \begin{equation*}
    \resizebox{\textwidth}{!}{
      \begin{tikzcd}[column sep=tiny, ampersand replacement=\&]
        \&\Hom_\Lambda(M,M^1)\rar\dar\&\Hom_\Lambda(M,M^2)\rar\dar\&\cdots\rar\&\Hom_\Lambda(M,M^d)\rar\dar\&\Hom_\Lambda(M,N)\dar\\
        \Hom_\Lambda(M,U^1)\rar\dar\&\Hom_\Lambda(M,U^2\oplus M^1)\rar\dar\&\Hom_\Lambda(M,U^3\oplus M^2)\dar\rar\&\cdots\rar\&\Hom_\Lambda(M,V\oplus M^d)\rar\dar\&\Hom_\Lambda(M,N)\\
        \Hom_\Lambda(M,U^1)\rar\&\Hom_\Lambda(M,U^2)\rar\&\Hom_\Lambda(M,U^3)\rar\&\cdots\rar\&\Hom_\Lambda(M,V).
      \end{tikzcd}}
  \end{equation*}
  whose vertical columns are split short exact sequences. Since the two top rows
  are exact, a diagram chase shows that the bottom row is also exact.
  
  By the definition of a $d$-pullback diagram, there exists a morphism $L\to
  U^1$ rendering the diagram
  \begin{equation*}
    \begin{tikzcd}
      L\ar[bend right]{ddr}\ar[bend left]{drr}{0}\drar[dotted]\\
      &U^1\rar\dar&U^2\dar\\
      &M^1\rar&M^2
    \end{tikzcd}
  \end{equation*}
  commutative. Since $L\to M^1$ is a monomorphism, $L\to U^1$ is also a
  monomorphism. Let $W\to U^1$ be a morphism such that the diagram
  \begin{equation*}
    \begin{tikzcd}
      &W\dar\drar{0}\\
      L\rar&U^1\rar&U^2.
    \end{tikzcd}
  \end{equation*}
  commutes. It follows that there exists a morphism $W\to L$ such that the
  diagram
  \begin{equation*}
    \begin{tikzcd}
      &W\dar\drar{0}\ar[dotted]{ddl}\\
      L\rar\dar[equals]&U^1\rar\dar&U^2\dar\\
      L\rar&M^1\rar&M^2
    \end{tikzcd}
  \end{equation*}
  commutes. Using the fact that $U^1\to U^2\oplus M^1$ is a monomorphism it can
  be easily verified that the diagram
  \begin{equation*}
    \begin{tikzcd}
      &W\dar\drar{0}\dlar[dotted]\\
      L\rar\dar[equals]&U^1\rar\dar&U^2\dar\\
      L\rar&M^1\rar&M^2
    \end{tikzcd}
  \end{equation*}
  commutes. This finishes the proof.
\end{proof}

\subsection{$d$-cluster-tilting subcategories}

The following proposition provides motivation for introducing the class of
$d$-cluster-tilting subcategories of $\mmod\Lambda$, \cf Theorem 2.2.3 in
\cite{MR2298819} (see Proposition 3.17 in \cite{MR3519980} for a proof).

\begin{proposition}
  \label{contrafinite_d-rigid_almost-d-kernels} Let $\M$ be a $d$-rigid
  subcategory of $\mmod\Lambda$. Consider a commutative diagram
  \begin{equation*}
    \begin{tikzcd}[column sep=small]
      0\rar&C^1\rar\dar[equals]&M^1\rar&\cdots\rar&M^{d-1}\rar\dar&M^d\rar&N\\
      &C^1\urar&&\cdots&C^d\urar
    \end{tikzcd}
  \end{equation*}
  such that $M^d\to N$ is a morphism in $\M$ and such that following statements
  hold:
  \begin{itemize}
  \item For each $i\in\set{1,\dots,d}$ the morphism $C^i\to M^i$ is a kernel of
    $M^i\to M^{i+1}$ (by convention, $M^{d+1}:=N$).
  \item For each $i\in\set{1,\dots,d-1}$ the morphism $M^i\to C^{i+1}$ is a
    surjective right $\M$-approximation of $C^{i+1}$.
  \end{itemize}
  Then, for each $i\in\set{1,\dots,d-1}$ the equality $\Ext_\Lambda^i(\M,C^1)=0$
  holds. Moreover, for each $X\in\M$ the induced sequence of $R$-modules
  \begin{equation*}
    \begin{tikzcd}[column sep=tiny, row sep=tiny]
      0\rar&\Hom_\Lambda(X,C^1)\rar&\Hom_\Lambda(X,M^1)\rar&\cdots\rar&\Hom_\Lambda(X,M^d)\rar&\Hom_\Lambda(X,N)
    \end{tikzcd}
  \end{equation*}
  is exact.
\end{proposition}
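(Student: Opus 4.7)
The plan is to extract from the hypothesized diagram a chain of short exact sequences and then prove both conclusions by iterated use of the long exact sequence for $\Ext$ combined with the $\M$-approximation property. Specifically, for each $i\in\set{1,\dots,d-1}$ the factorization $M^i\to C^{i+1}\hookrightarrow M^{i+1}$ (with $M^{d+1}:=N$) together with $C^i=\ker(M^i\to M^{i+1})=\ker(M^i\to C^{i+1})$ yields a short exact sequence
\begin{equation*}
  \begin{tikzcd}[column sep=small]
    0\rar&C^i\rar&M^i\rar&C^{i+1}\rar&0,
  \end{tikzcd}
\end{equation*}
where surjectivity of $M^i\to C^{i+1}$ is part of the hypothesis.

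For the Ext-vanishing, fix $X\in\M$ and apply $\Hom_\Lambda(X,-)$ to these short exact sequences. For each $k\in\set{1,\dots,d-1}$ and each $j\in\set{2,\dots,d-1}$, the groups $\Ext_\Lambda^{j-1}(X,M^k)$ and $\Ext_\Lambda^j(X,M^k)$ both vanish by $d$-rigidity, so the connecting map gives an isomorphism $\Ext_\Lambda^{j-1}(X,C^{k+1})\cong\Ext_\Lambda^j(X,C^k)$. Iterating yields $\Ext_\Lambda^j(X,C^1)\cong\Ext_\Lambda^1(X,C^j)$ for $j\in\set{1,\dots,d-1}$. To finish, the long exact sequence coming from $0\to C^j\to M^j\to C^{j+1}\to0$ reads
\begin{equation*}
  \begin{tikzcd}[column sep=small]
    \Hom_\Lambda(X,M^j)\rar&\Hom_\Lambda(X,C^{j+1})\rar&\Ext_\Lambda^1(X,C^j)\rar&\Ext_\Lambda^1(X,M^j)=0,
  \end{tikzcd}
\end{equation*}
and the leftmost map is surjective because $M^j\to C^{j+1}$ is a right $\M$-approximation. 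Hence $\Ext_\Lambda^1(X,C^j)=0$, and consequently $\Ext_\Lambda^j(X,C^1)=0$ for all $X\in\M$ and all $j\in\set{1,\dots,d-1}$.

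For exactness of the Hom-sequence, injectivity at $\Hom_\Lambda(X,C^1)$ is immediate from $C^1\hookrightarrow M^1$. At each internal position $\Hom_\Lambda(X,M^i)$ with $i\in\set{1,\dots,d}$, the kernel of $\Hom_\Lambda(X,M^i)\to\Hom_\Lambda(X,M^{i+1})$ equals $\Hom_\Lambda(X,C^i)$ (apply $\Hom_\Lambda(X,-)$ to the left-exact sequence $0\to C^i\to M^i\to M^{i+1}$). The image of $\Hom_\Lambda(X,M^{i-1})\to\Hom_\Lambda(X,M^i)$ factors through the composite $\Hom_\Lambda(X,M^{i-1})\to\Hom_\Lambda(X,C^i)\to\Hom_\Lambda(X,M^i)$; the first map is surjective by the $\M$-approximation property (valid since $i-1\le d-1$) and the second is injective, so the image coincides precisely with $\Hom_\Lambda(X,C^i)$. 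This matches the kernel computed above, yielding exactness.

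The proof is essentially bookkeeping once the short exact sequences are extracted, so there is no serious obstacle; the only point that needs a little care is ensuring that both the dimension-shift step and the approximation-surjectivity step remain in the allowed range $i,j\le d-1$, which is precisely why the proposition asserts Ext-vanishing only up to degree $d-1$ and exactness only up to $\Hom_\Lambda(X,N)$ (with no claim of surjectivity at $N$).
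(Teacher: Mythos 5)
Your proof is correct and follows essentially the same route as the paper's (which is stated tersely and defers details to Proposition 3.17 of the reference it cites): extract the short exact sequences $0\to C^i\to M^i\to C^{i+1}\to 0$, deduce the Hom-exactness from left exactness of $\Hom_\Lambda(X,-)$ plus the surjectivity of the approximations, and obtain the Ext-vanishing by dimension shifting along these sequences using $d$-rigidity, with the degree-one case killed by the approximation property. The paper organizes the Ext-vanishing as a downward induction on the $C^{d-i}$ rather than as your explicit chain of isomorphisms $\Ext^j_\Lambda(X,C^1)\cong\Ext^1_\Lambda(X,C^j)$, but the content is identical.
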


The following definition is motivated by
\ref{contrafinite_d-rigid_almost-d-kernels} and its dual.

\begin{definition}[(see Definition 2.2 in \cite{MR2298819})]
  Let $\M$ be a functorially finite subcategory of $\mmod\Lambda$. We call $\M$
  a \emph{$d$-cluster-tilting subcategory} if
  \begin{align*}
    \M=&\setP{X\in\mmod\Lambda}{\forall k\in\set{1,\dots,d-1}:\Ext_\Lambda^k(X,\M)=0}\\
    =&\setP{Y\in\mmod\Lambda}{\forall
       k\in\set{1,\dots,d-1}:\Ext_\Lambda^k(\M,Y)=0}.
  \end{align*}
\end{definition}

\begin{remark}
  Note that $\mmod\Lambda$ itself is its unique $1$-cluster-tilting subcategory.
  More generally, let $\M$ be a $d$-cluster-tilting subcategory of
  $\mmod\Lambda$. By definition, $\M$ is a both a generating and a cogenerating
  subcategory of $\mmod\Lambda$. In the spirit of the Morita--Tachikawa
  correspondence \cite{MR0349740}, the study of $d$-cluster-tilting
  subcategories can be regarded as the study of a particularly nice class of
  generating-cogenerating subcategories of $\mmod\Lambda$, see for example
  \cite{MR2298820,160200127}.
\end{remark}

One of the defining properties of abelian categories is that every epimorphism
is the cokernel of its kernel. The following result shows that
$d$-cluster-tilting subcategories of $\mmod\Lambda$ satisfy an analogous
property, \cf Theorem 3.16 and Proposition 3.18 in \cite{MR3519980}.

\begin{proposition}\label{d-ct_a2} Let
  $\M$ be a $d$-cluster-tilting subcategory of $\mmod\Lambda$ and
  \begin{equation*}
    \begin{tikzcd}[column sep=small]
      \delta\colon&0\rar&L\rar&M^1\rar&\cdots\rar&M^d\rar{g}&N
    \end{tikzcd}
  \end{equation*}
  a left $d$-exact sequence in $\M$ such that $g$ is an epimorphism. Then,
  $\delta$ is a $d$-exact sequence.
\end{proposition}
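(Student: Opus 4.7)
The plan is to combine \ref{exact_d-sequences_are_exact} and \ref{d-rigid_long_exact} with the hypothesis that $g$ is epic.

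First, since $\M$ is $d$-cluster-tilting it is in particular generating and $d$-rigid. Therefore \ref{exact_d-sequences_are_exact} applies to $\delta$ and shows that
\begin{equation*}
  \begin{tikzcd}[column sep=small]
    0\rar&L\rar&M^1\rar&\cdots\rar&M^d\rar{g}&N
  \end{tikzcd}
\end{equation*}
is exact as a complex of $\Lambda$-modules. Combining this with the assumption that $g$ is an epimorphism, one obtains an honest short/long exact sequence
\begin{equation*}
  \begin{tikzcd}[column sep=small]
    0\rar&L\rar&M^1\rar&\cdots\rar&M^d\rar{g}&N\rar&0
  \end{tikzcd}
\end{equation*}
in $\mmod\Lambda$ whose terms all lie in $\M$.

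Next, I would apply the second exact sequence from \ref{d-rigid_long_exact} (valid because $\M$ is $d$-rigid) to the above exact sequence. For each $Y\in\M$ this yields exactness of
\begin{equation*}
  \begin{tikzcd}[column sep=small]
    0\rar&\Hom_\Lambda(N,Y)\rar&\Hom_\Lambda(M^d,Y)\rar&\cdots\rar&\Hom_\Lambda(M^1,Y)\rar&\Hom_\Lambda(L,Y),
  \end{tikzcd}
\end{equation*}
which is exactly the condition that $\delta$ is a right $d$-exact sequence in $\M$. Since $\delta$ was assumed to be left $d$-exact, this proves that $\delta$ is $d$-exact.

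There is no real obstacle here: the argument is simply the observation that under the epimorphism hypothesis one is in the situation of \ref{d-rigid_long_exact}, which provides the dual exactness needed for the $d$-exactness. The only point to double-check is that the conclusion of \ref{exact_d-sequences_are_exact} combined with the epimorphism hypothesis really gives exactness at \emph{every} position, including at $N$; this is immediate, since left $d$-exactness applied to $X=\Lambda$ (available because $\M$ is generating) identifies the complex with $0\to L\to M^1\to\cdots\to M^d\to N$ viewed in $\mmod\Lambda$, and surjectivity of $g$ then closes the sequence on the right.
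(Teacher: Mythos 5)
Your proof is correct and follows exactly the paper's own argument: use \ref{exact_d-sequences_are_exact} together with surjectivity of $g$ to see that $\delta$ is an exact sequence in $\mmod\Lambda$, then invoke \ref{d-rigid_long_exact} (since $\M$ is in particular $d$-rigid) to obtain the required $\Hom$-exactness. No gaps; the final paragraph of your writeup just makes explicit a step the paper leaves implicit.
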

\begin{proof}
  Since $g$ is an epimorphism, by \ref{exact_d-sequences_are_exact} the sequence
  \begin{equation*}
    \begin{tikzcd}[column sep=small]
      0\rar&L\rar&M^1\rar&\cdots\rar&M^d\rar{g}&N\rar&0
    \end{tikzcd}
  \end{equation*}
  is exact. Since $\M$ is in particular a $d$-rigid subcategory, the claim
  follows from \ref{d-rigid_long_exact}.
\end{proof}

\begin{proposition}
  \label{minimal}
  Let $\M$ be a $d$-cluster-tilting subcategory of $\mmod\Lambda$ and $g\colon
  M^d\to N$ a morphism in $\M$. 
  Then, there exists a left $d$-exact sequence in $\M$ of the
    form
    \begin{equation*}
      \begin{tikzcd}[column sep=small]
        0\rar&L\rar{f}&M^1\rar&\cdots\rar&M^{d-1}\rar&M^d\rar{g}&N.
      \end{tikzcd}
    \end{equation*}
    Moreover, for all $i\in\set{1,\dots,d-1}$ the morphism $M^i\to M^{i+1}$ can
    be chosen to be right minimal.
\end{proposition}
\begin{proof}
  This is immediate from the fact that $\M$ is a contravariantly
  finite subcategory, \ref{contrafinite_d-rigid_almost-d-kernels}, and the
  definition of a $d$-cluster-tilting subcategory. For the right minimality,
  choose the right $\M$-approximations $M^i\to C^{i+1}$ used to construct the
  left $d$-exact sequence to be right minimal.
\end{proof}

Combining \ref{minimal}, \ref{d-ct_a2}, and their duals, gives the following
result.

\begin{proposition}
  Let $\M$ be a $d$-cluster-tilting subcategory of $\mmod\Lambda$. Then, $\M$ is
  a $d$-abelian category in the sense of Definition 3.1 in \cite{MR3519980}.
\end{proposition}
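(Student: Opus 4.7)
The plan is to verify each of the axioms (A0), (A1), (A2), and (A2$^{\op}$) from Definition 3.1 of \cite{MR3519980} in turn, leveraging the structural results already developed in this section. Additivity and idempotent completeness of $\M$ (axiom (A0)) are immediate from our standing conventions, since every subcategory we consider is full and closed under finite direct sums and direct summands in $\mmod\Lambda$.

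For axiom (A1), I would note that the existence of $d$-kernels is precisely the content of \ref{minimal}\ref{it:exists}: any morphism $g\colon M^d\to N$ in $\M$ can be extended to a left $d$-exact sequence $0\to L\to M^1\to\cdots\to M^d\xto{g}N$ in $\M$. The existence of $d$-cokernels follows by applying the dual statement (which holds for the dual reason: $\M$ is also covariantly finite and a cogenerating $d$-rigid subcategory, so the dual of \ref{contrafinite_d-rigid_almost-d-kernels} applies). This establishes that every morphism in $\M$ admits both a $d$-kernel and a $d$-cokernel.

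For axiom (A2), suppose $f\colon L\to M^1$ is a monomorphism in $\M$. By the dual of \ref{minimal}\ref{it:exists} applied to $f$, there exists a right $d$-exact sequence in $\M$ of the form
\begin{equation*}
  \begin{tikzcd}[column sep=small]
    L\rar{f}&M^1\rar&\cdots\rar&M^d\rar&N\rar&0.
  \end{tikzcd}
\end{equation*}
Since $f$ is a monomorphism in $\mmod\Lambda$, the dual of \ref{d-ct_a2} (whose proof is entirely analogous, invoking the dual of \ref{exact_d-sequences_are_exact} together with \ref{d-rigid_long_exact}) shows that this sequence is in fact a $d$-exact sequence in $\M$, which is precisely what axiom (A2) requires. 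Axiom (A2$^{\op}$) is verified symmetrically: starting from an epimorphism $g\colon M^d\to N$ in $\M$, \ref{minimal}\ref{it:exists} produces a left $d$-exact sequence ending in $g$, and then \ref{d-ct_a2} promotes it to a $d$-exact sequence.

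I do not anticipate any serious obstacle here, since the nontrivial work has been done in \ref{minimal}, \ref{d-ct_a2}, and \ref{exact_d-sequences_are_exact}; the only care needed is to make explicit that the dual versions of these results are available in our setting, which is the case because $\M$ is functorially finite and is both generating and cogenerating. The proof is therefore essentially a bookkeeping assembly of the axioms from the results of this section.
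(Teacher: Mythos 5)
Your proposal is correct and follows exactly the route the paper intends: the paper gives no separate proof, stating only that the proposition follows by ``combining \ref{minimal}, \ref{d-ct_a2}, and their duals,'' which is precisely the bookkeeping you carry out axiom by axiom. Your explicit remark that the dual statements are available because $\M$ is functorially finite and generating-cogenerating is the only point of care, and you handle it correctly.
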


We conclude this section with a couple of technical results regarding
$d$-pullback diagrams and $d$-pushout diagrams in $d$-cluster-tilting
subcategories of $\mmod\Lambda$.

\begin{proposition}[(see Theorem 3.8 in \cite{MR3519980})]
  \label{existence_d-pullback} Let $\M$ be a $d$-cluster-tilting subcategory of
  $\mmod\Lambda$,
  \begin{equation*}
    \begin{tikzcd}[column sep=small]
      M^1\rar&\cdots\rar&M^d\rar&M
    \end{tikzcd}
  \end{equation*}
  a complex in $\M$, and $f\colon N\to M$ a morphism in $\M$. Then, there exists
  a $d$-pullback diagram in $\M$ of the form
  \begin{equation*}
    \begin{tikzcd}
      N^1\rar\dar&\cdots\rar&N^d\rar\dar&N\dar{f}\\
      M^1\rar&\cdots\rar&M^d\rar&M.
    \end{tikzcd}
  \end{equation*}
\end{proposition}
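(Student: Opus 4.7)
The plan is to construct the $d$-pullback diagram by iterative right-to-left extension, paralleling the construction in Proposition~\ref{minimal}\ref{it:exists} while at each step absorbing one summand $M^{i-1}$ from the given complex.

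Denote by $d^d\colon M^d\to M$ the rightmost map of the given complex, and set $N^{d+1}:=N$, $u^{d+1}:=f$. Consider the morphism $[f,d^d]\colon N\oplus M^d\to M$ in $\M$, let $K^d$ be its kernel in $\mmod\Lambda$, and choose a right $\M$-approximation $\pi^d\colon N^d\to K^d$, available by the contravariant finiteness of $\M$. The two components of the composite $N^d\xto{\pi^d}K^d\hookrightarrow N\oplus M^d$ provide morphisms $e^d\colon N^d\to N$ and $u^d\colon N^d\to M^d$, and the condition $[f,d^d]\circ\pi^d=0$ encodes the commutativity of the rightmost square (after an appropriate sign choice). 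Proceed inductively: assuming $N^{i+1}$ and its accompanying maps have been constructed, let $K^i$ be the kernel in $\mmod\Lambda$ of the mapping-cone morphism $N^{i+1}\oplus M^i\to N^{i+2}\oplus M^{i+1}$, take a right $\M$-approximation $N^i\to K^i$, and read off $e^i$ and $u^i$ as before. The key verification is that enlarging this approximation to $N^i\oplus M^{i-1}\to K^i$, by sending the summand $M^{i-1}$ into $N^{i+1}\oplus M^i$ via $(0,d^{i-1})$, still factors through $K^i$ (thanks to the complex relation $d^i\circ d^{i-1}=0$) and remains a right $\M$-approximation, the property being inherited from the $N^i$ summand.

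It remains to verify that the resulting mapping cone
\[
N^1\to N^2\oplus M^1\to\cdots\to N\oplus M^d\to M
\]
is a left $d$-exact sequence in $\M$. Middle exactness of $\Hom_\Lambda(X,-)$ for $X\in\M$ is immediate from the right $\M$-approximation property established at each step. The only delicate point, and the main technical obstacle, is the leftmost exactness: one must show the injectivity of $\Hom_\Lambda(X,N^1)\to\Hom_\Lambda(X,N^2\oplus M^1)$. This is handled as in Proposition~\ref{minimal}\ref{it:exists} in combination with Proposition~\ref{contrafinite_d-rigid_almost-d-kernels}: at the final iteration one refines the construction so that $N^1$ is taken to be the actual kernel $K^1$ in $\mmod\Lambda$, and an iterative $\Ext$-vanishing argument, which uses the $d$-cluster-tilting hypothesis in full, shows that $K^1$ lies in $\M$. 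This makes the refinement valid and supplies the missing injectivity, completing the argument modulo the routine bookkeeping with signs and mapping-cone matrices required by Definition~\ref{pullback}.
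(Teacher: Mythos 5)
Your proposal is correct and follows essentially the same route as the paper: the kernels $K^i$ of the partial mapping-cone differentials are exactly the iterated pullbacks $C^i$ constructed there, the right $\M$-approximations play the identical role, and your final step (taking $N^1$ to be the honest kernel and deducing $K^1\in\M$ from the $\Ext$-vanishing of \ref{contrafinite_d-rigid_almost-d-kernels} together with the $d$-cluster-tilting hypothesis) is precisely how the paper concludes. No gaps.
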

\begin{proof}
  We shall construct a commutative diagram
  \begin{equation*}
    \begin{tikzcd}[column sep=small]
      &N^2\drar\ar{rr}&&\cdots&&\cdots\drar\ar{rr}&&N^d\drar\ar{rr}&&N\drar\\
      N^1=C^1\urar\drar&&C^2\urar\drar&&\cdots&&C^{d-1}\urar\drar&&C^d\urar\drar&&M\\
      &M^1\urar\ar{rr}&&\cdots&&\cdots\urar\ar{rr}&&M^{d-1}\urar\ar{rr}&&M^d\urar
    \end{tikzcd}
  \end{equation*}
  as follows. Construct a pullback diagram
  \begin{equation*}
    \begin{tikzcd}
      C^d\rar\dar&N\dar{f}\\
      M^d\rar&M.
    \end{tikzcd}
  \end{equation*}
  Since the solid diagram
  \begin{equation*}
    \begin{tikzcd}
      M^{d-1}\drar\rar[dotted]\ar[bend left]{rr}{0}&C^d\rar\dar&N\dar{f}\\
      &M^d\rar&M
    \end{tikzcd}
  \end{equation*}    
  commutes, the dotted morphism rendering the diagram commutative exists. Let
  $N^d\to C^d$ be a right $\M$-approximation of $C^d$. Construct a pullback
  diagram
  \begin{equation*}
    \begin{tikzcd}
      C^{d-1}\rar\dar&N^d\dar\\
      M^{d-1}\rar&C^d
    \end{tikzcd}
  \end{equation*}
  and iterate the previous argument. By construction, the resulting diagram
  induces a commutative diagram
  \begin{equation*}
    \begin{tikzcd}[column sep=small]
      0\rar&C^1\rar\dar[equals]&N^2\oplus M^1\rar&\cdots\rar&N^d\oplus M^{d-1}\rar\dar&N\oplus M^d\rar&M\\
      &C^1\urar&&\cdots&C^d\urar
    \end{tikzcd}
  \end{equation*}
  which satisfies the assumptions in \ref{contrafinite_d-rigid_almost-d-kernels}
  whence $C^1$ lies in $\M$ and
  \begin{equation*}
    \begin{tikzcd}[column sep=small]
      0\rar&C^1\rar&N^2\oplus M^1\rar&\cdots\rar&N^d\oplus M^{d-1}\rar&N\oplus
      M^d\rar{g}&M
    \end{tikzcd}
  \end{equation*}
  is a left $d$-exact sequence in $\M$. This sequence is precisely the mapping
  cone of the morphism of complexes
  \begin{equation*}
    \begin{tikzcd}
      N^1=C^1\rar\dar&\cdots\rar&N^d\rar\dar&N\dar{f}\\
      M^1\rar&\cdots\rar&M^d\rar&M.
    \end{tikzcd}
  \end{equation*}
  The claim follows.
\end{proof}

\begin{proposition}[(see Proposition 4.8 in \cite{MR3519980})]
  \label{props_d-pullback} Let $\M$ be a $d$-cluster-tilting subcategory of
  $\mmod\Lambda$. Consider a morphism of complexes in $\M$ of the form
  \begin{equation*}
    \begin{tikzcd}
      0\rar&L\rar\dar[equals]&U^1\rar\dar&\dots\rar&U^d\rar{g'}\dar&V\dar\\
      0\rar&L\rar&M^1\rar&\cdots\rar&M^d\rar{g}&N\rar&0
    \end{tikzcd}
  \end{equation*}
  whose bottom row is a $d$-exact sequence. Then, the following statements are
  equivalent:
  \begin{enumerate}
  \item\label{it:i} The top row is a $d$-exact sequence.
  \item\label{it:ii} The rightmost $d$ squares form a $d$-pullback diagram.
  \item\label{it:iii} The rightmost $d$ squares form a $d$-pullback diagram and a
    $d$-pushout diagram.
  \end{enumerate}
\end{proposition}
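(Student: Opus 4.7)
My plan is to prove $(iii) \Rightarrow (ii)$ (trivial), $(ii) \Rightarrow (iii)$ (by upgrading left $d$-exactness to $d$-exactness), and $(i) \Leftrightarrow (iii)$ (via a mapping-cone argument), from which the equivalence of all three statements follows.

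For $(ii) \Rightarrow (iii)$: if the rightmost square is a $d$-pullback, then the small mapping cone $C^\bullet$ from \ref{pullback} is left $d$-exact in $\M$, and by \ref{exact_d-sequences_are_exact} it is left exact as a complex in $\mmod\Lambda$. Its final morphism $V \oplus M^d \to N$ is $[v,\, g]$, where $v\colon V \to N$ is the rightmost vertical morphism and $g\colon M^d \to N$ is the final morphism of the bottom row; the latter is epi because the bottom row is $d$-exact, and so $[v,\, g]$ is epi as well. Hence $C^\bullet$ is a full exact sequence in $\mmod\Lambda$ whose terms all lie in the $d$-rigid subcategory $\M$. Applying \ref{d-rigid_long_exact} shows that $C^\bullet$ is a $d$-exact sequence in $\M$; in particular it is right $d$-exact, which is precisely the $d$-pushout condition.

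For $(i) \Leftrightarrow (iii)$: let $\tilde C^\bullet$ denote the total mapping cone of the full morphism of complexes in the statement, which fits into a short exact sequence of complexes $0 \to B^\bullet \to \tilde C^\bullet \to A^\bullet[1] \to 0$, where $A^\bullet$ and $B^\bullet$ are the top and bottom rows, respectively. A direct calculation with the mapping cone differentials shows that the antidiagonal embedding of a copy of $L$ in two consecutive degrees of $\tilde C^\bullet$ (glued using the first morphism $L \to U^1$ of the top row) produces a contractible subcomplex whose quotient is isomorphic to the small mapping cone $C^\bullet$ of \ref{pullback}. Hence $\tilde C^\bullet$ and $C^\bullet$ have the same cohomology, and in particular $\tilde C^\bullet$ is exact in $\mmod\Lambda$ if and only if $C^\bullet$ is. Assuming $(i)$, both $A^\bullet$ and $B^\bullet$ are exact in $\mmod\Lambda$ by \ref{exact_d-sequences_are_exact}, so the long exact sequence of cohomology forces $\tilde C^\bullet$, and hence $C^\bullet$, to be exact in $\mmod\Lambda$; \ref{d-rigid_long_exact} then upgrades $C^\bullet$ to a $d$-exact sequence in $\M$, proving $(iii)$. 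Conversely, assuming $(iii)$, the argument of the previous paragraph gives $C^\bullet$ exact in $\mmod\Lambda$, hence $\tilde C^\bullet$ exact; combined with $B^\bullet$ exact, the long exact sequence forces $A^\bullet$ exact, and \ref{d-rigid_long_exact} then shows $A^\bullet$ is $d$-exact, proving $(i)$.

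The main technical step, and the expected main obstacle, is the explicit identification of the contractible subcomplex of $\tilde C^\bullet$ together with the verification that the induced quotient matches the small mapping cone $C^\bullet$; this requires careful bookkeeping of the sign conventions in the mapping cone differentials. Once this is settled, the remainder of the argument is a formal manipulation of the long exact sequence of cohomology, combined with the previously established \ref{exact_d-sequences_are_exact} and \ref{d-rigid_long_exact}.
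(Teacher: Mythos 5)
Your proposal is correct, and while your treatment of \ref{it:ii}$\Leftrightarrow$\ref{it:iii} coincides with the paper's (the paper packages your two citations of \ref{exact_d-sequences_are_exact} and \ref{d-rigid_long_exact} into the single appeal to \ref{d-ct_a2}), your handling of \ref{it:i}$\Leftrightarrow$\ref{it:iii} is genuinely different. The paper proves \ref{it:iii}$\Rightarrow$\ref{it:i} by invoking \ref{d-pullback_left_exact_d-sequence} to get left $d$-exactness of the top row and then running a separate universal-property argument (using the $d$-pushout condition and the surjectivity of $g$) to show $g'$ is an epimorphism, and it leaves \ref{it:i}$\Rightarrow$\ref{it:ii} to the reader with only a hint. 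Your route instead compares the total mapping cone $\tilde C^\bullet$ with the small cone $C^\bullet$ by splitting off the contractible summand $L\xrightarrow{1}L$; the verification goes through because the two identities needed, $d_U\circ f'=0$ and $u^1\circ f'=f$, are exactly the hypotheses that the top row is a complex and the leftmost square commutes, and after the change of basis $(u,\ell)\mapsto(u+f'\ell,\ell)$ the residual differential is precisely that of $C^\bullet$. This buys a symmetric, self-contained treatment of both implications via the long exact cohomology sequence, at the price of the sign bookkeeping you flag. One small point of care: in the direction \ref{it:i}$\Rightarrow$\ref{it:iii} you need $A^\bullet$ acyclic \emph{at $V$}, i.e.\ $g'$ epi, which does not follow from \ref{exact_d-sequences_are_exact} alone but requires its dual (right $d$-exactness plus $\M$ cogenerating), as in the Remark following that proposition; without this, $H^{d}(\mathrm{Cone})\cong\Coker(g')$ would survive. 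Since statement \ref{it:i} asserts full $d$-exactness of the top row, this is available, and the argument is complete.
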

\begin{proof}
  We show that \ref{it:ii} implies \ref{it:iii}. By assumption
  \begin{equation*}
    \begin{tikzcd}[column sep=small]
      0\rar&U^1\rar&U^2\oplus M^1\rar&\cdots\rar&V\oplus M^d\rar&N
    \end{tikzcd}
  \end{equation*}
  is a left $d$-exact sequence in $\M$. Since the morphism $V\oplus M^d\to N$ is
  an epimorphism, the claim follows from \ref{d-ct_a2}.

  Now we show that statement \ref{it:iii} implies statement \ref{it:i}. By
  \ref{d-pullback_left_exact_d-sequence} the top row of the diagram is a left
  $d$-exact sequence. Hence, in view of \ref{d-ct_a2}, it is enough to show that
  $g'$ is an epimorphism. For this, let $V\to W$ be a morphism such that the
  composite $U^d\xto{g'}V\to W$ vanishes. Since the solid diagram
  \begin{equation*}
    \begin{tikzcd}
      U^d\rar{g'}\dar&V\dar\ar[bend left]{ddr}\\
      M^d\rar{g}\ar[bend right]{drr}{0}&N\drar[dotted]\\&&W
    \end{tikzcd}
  \end{equation*}
  commutes and the inside square is part of a $d$-pushout diagram, the dotted
  morphism $N\to W$ rendering the diagram commutative exists. Since $g$ is an
  epimorphism, and the composite $M^d\to N\to W$ vanishes, the morphism $N\to W$
  is the zero morphism. Therefore the morphism $V\to W$ is also zero and we
  conclude that $g'$ is an epimorphism. The fact that statement \ref{it:i}
  implies statement \ref{it:ii} can be shown using an argument similar to that
  used in the proof of \ref{d-pullback_left_exact_d-sequence}. We leave the
  details to the reader.
\end{proof}

\section{The defect of a $d$-exact sequence}
\label{sec:the_defect_of_an_exact_d-sequence}

In this section we introduce the defect of a $d$-exact sequence, which is
analogous to the defect of a short exact sequence. We prove a higher version of
Auslander's defect formula using a minor modification of Krause's proof of the
classical formula, \cf \cite{MR1987342}.

\begin{definition}(see Section IV.4 in \cite{MR1476671}) Let $\M$ be a
  subcategory of $\mmod\Lambda$ and
  \begin{equation*}
    \begin{tikzcd}[column sep=small]
      \delta\colon&L\rar&M^1\rar&\cdots\rar&M^d\rar&N
    \end{tikzcd}
  \end{equation*}
  a $d$-exact sequence in $\M$. We define $\delta^*$, the \emph{contravariant
    defect of $\delta$}, by the exact sequence of functors
  \begin{equation*}
    \begin{tikzcd}[column sep=small]
      \Hom_\Lambda(-,M^d)\rar&\Hom_\Lambda(-,N)\rar&\delta^*\rar&0.
    \end{tikzcd}
  \end{equation*}
  Dually, the \emph{covariant defect of $\delta$}, denoted by $\delta_*$, is
  defined by the exact sequence of functors
  \begin{equation*}
    \begin{tikzcd}[column sep=small]
      \Hom_\Lambda(M^1,-)\rar&\Hom_\Lambda(L,-)\rar&\delta_*\rar&0.
    \end{tikzcd}
  \end{equation*}  
\end{definition}

\begin{remark}
  Let $\M$ be a subcategory of $\mmod\Lambda$. Let $\dEx(\M)$ be the category
  whose objects are the $d$-exact sequences in $\M$ and whose morphisms are given by
  morphisms of complexes. Clearly, the association $(\delta,X)\mapsto
  \delta^*(X)$ induces a functor
  \begin{equation*}
    \dEx(\M)\times \M^\op\to\mmod R
  \end{equation*}
  which is additive in each variable. Dually,
  the association $(\delta,X)\mapsto \delta_*(X)$ induces a functor
  \begin{equation*}
    \dEx(\M)^\op\times \M\to\mmod R
  \end{equation*}
  which is additive in each variable.
\end{remark}

The following observation shows that the defect is well defined up to homotopy
equivalence of $d$-exact sequences (considered as complexes).

\begin{proposition}
  Let $\M$ be a subcategory of $\mmod\Lambda$. The association
  $(\delta,X)\mapsto\delta^*(X)$ yields a bifunctor
  \begin{equation*}
    \KdEx(\M)\times\underline{\M}^\op\to\mmod R,
  \end{equation*}
  where $\KdEx(\M)$ denotes the category of $d$-exact sequences in $\M$ and
  homotopy classes of cochain morphisms between them.
\end{proposition}
\begin{proof}
  Let
  \begin{equation*}
    \begin{tikzcd}
      \delta\dar{\varphi}&L\rar\dar&M^1\rar\dar&\cdots\rar&M^d\rar\dar&N\dar\\
      \varepsilon&U\rar&V^1\rar&\cdots\rar&V^d\rar&W
    \end{tikzcd}
  \end{equation*}
  a null-homotopy between $d$-exact sequences in $\M$. We need to verify that
  the morphism $\varphi^*\colon\delta^*\to\varepsilon^*$ vanishes. By the
  functoriality and additivity of $\Hom_\Lambda(-,-)$, for each $M\in\M$ the
  induced morphism of complexes
  \begin{equation*}
    \begin{tikzcd}[column sep=tiny]
      0\rar&\Hom_\Lambda(M,L)\rar\dar&\Hom_\Lambda(M,M^1)\rar\dar&\cdots\rar&\Hom_\Lambda(M,M^d)\rar\dar&\Hom_\Lambda(M,N)\dar\rar&0\\
      0\rar&\Hom_\Lambda(M,U)\rar&\Hom_\Lambda(M,V^1)\rar&\cdots\rar&\Hom_\Lambda(M,V^d)\rar&\Hom_\Lambda(M,W)\rar&0
    \end{tikzcd}
  \end{equation*}
  is also a null-homotopy whence the induced morphism in cohomology vanishes.
  Since $\delta^*(M)$ is the cohomology of the top row at $\Hom_\Lambda(M,N)$
  and $\varepsilon^*(M)$ is the cohomology of the bottom row at
  $\Hom_\Lambda(M,W)$, we conclude that $\varphi^*(M)$ vanishes. Finally, it is
  clear that $\delta^*$ vanishes on projectives.
\end{proof}

We denote Heller's syzygy functor by $\Omega\colon\smod\Lambda\to\smod\Lambda$.
Following \cite{MR2298819}, we consider the functor
\begin{equation*}
  \begin{tikzcd}[column sep=small, row sep=tiny]
    \Tr_d:=\Tr\Omega^{d-1}\colon\smod\Lambda\rar&\smod\Lambda^\op
  \end{tikzcd}
\end{equation*}
which is a higher analogue of the Auslander--Bridger transposition. Note that if
$M\in\mmod\Lambda$ has projective dimension strictly less than $d$, then
$\Tr_dM=0$.

\begin{proposition}[(see Proposition 1.1.1 in \cite{MR2298819})]
  \label{Tr_equivalence} Let $\M$ be a generating $d$-rigid subcategory of
  $\mmod\Lambda$. Then, $\Tr_d\colon\sM\to\smod\Lambda^\op$ is fully
  faithful.
\end{proposition}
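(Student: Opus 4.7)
The plan is to show that $\Tr_d|_{\sM}$ is fully faithful; any fully faithful functor is an equivalence onto its essential image, giving the conclusion. Since the classical Auslander--Bridger transpose $\Tr\colon\smod\Lambda\to\smod\Lambda^{\op}$ is already a duality, the factorisation $\Tr_d=\Tr\circ\Omega^{d-1}$ reduces the problem to proving that $\Omega^{d-1}\colon\sM\to\smod\Lambda$ is fully faithful. The case $d=1$ is then the classical statement, so suppose $d\geq 2$.

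For $M,N\in\M$, I would construct a natural isomorphism $\underline{\Hom}_\Lambda(M,N)\cong\underline{\Hom}_\Lambda(\Omega^{d-1}M,\Omega^{d-1}N)$ in two dimension-shifting steps. The key input throughout is that $\Lambda\in\M$ combined with $d$-rigidity yields $\Ext^i_\Lambda(M,P)=0$ for every $M\in\M$, every projective $P\in\add\Lambda$, and every $1\leq i\leq d-1$.

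The first step establishes the identity $\underline{\Hom}_\Lambda(\Omega^{d-1}M,X)\cong\Ext^{d-1}_\Lambda(M,X)$ for arbitrary $X\in\mmod\Lambda$. Applying $\Hom_\Lambda(-,X)$ to the sequence $0\to\Omega M\to P\to M\to 0$ arising from a projective cover presents $\Ext^1_\Lambda(M,X)$ as the quotient of $\Hom_\Lambda(\Omega M,X)$ by those maps which extend to $P$; a pushout argument then identifies this subgroup with the full subgroup of maps factoring through an arbitrary projective $Q$, because the resulting extension $0\to Q\to E\to M\to 0$ splits thanks to $\Ext^1_\Lambda(M,Q)=0$. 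Iterating $d-1$ times yields the formula. The second step specialises $X=\Omega^{d-1}N$ and dimension-shifts back along the short exact sequences $0\to\Omega^{k+1}N\to P_k\to\Omega^k N\to 0$ coming from a projective resolution of $N$: since each $P_k\in\add\Lambda\subseteq\M$, the vanishing $\Ext^i_\Lambda(M,P_k)=0$ for $1\leq i\leq d-1$ produces a chain
\begin{equation*}
\Ext^{d-1}_\Lambda(M,\Omega^{d-1}N)\cong\cdots\cong\Ext^1_\Lambda(M,\Omega N)\cong\underline{\Hom}_\Lambda(M,N),
\end{equation*}
where the last isomorphism again uses that morphisms $M\to N$ factoring through a projective lift along the projective cover of $N$. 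Composing the two chains gives the required identification.

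The principal obstacle I anticipate is a naturality check: one must verify that this composite isomorphism coincides with the map on morphism spaces induced by the syzygy functor $\Omega^{d-1}$ itself, so that $\Omega^{d-1}|_{\sM}$ is genuinely fully faithful rather than merely witnessing an abstract isomorphism of hom-groups. Because every intermediate step is natural in both $M$ and $N$ and functorial in the chosen projective resolution, this amounts to a routine but careful diagram chase.
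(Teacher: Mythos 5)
Your argument is correct, but it takes a genuinely different route from the paper. The paper works directly with dualized projective resolutions: given $M,N\in\M$ and a morphism $f\colon M\to N$, it lifts $f$ to a map between length-$d$ truncations of projective resolutions, applies $\Hom_\Lambda(-,\Lambda)$, and observes that the resulting complexes are exact at the intermediate terms because their cohomology there is $\Ext_\Lambda^i(M,\Lambda)$ and $\Ext_\Lambda^i(N,\Lambda)$, which vanish for $1\leq i\leq d-1$ since $\Lambda\in\M$ and $\M$ is $d$-rigid; thus one obtains projective resolutions of $\Tr_dM$ and $\Tr_dN$ of the same shape, and since $\Hom_\Lambda(-,\Lambda)$ is a duality on projectives, dualizing again recovers the original data and full faithfulness follows from the comparison of lifts. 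You instead exploit the factorisation $\Tr_d=\Tr\circ\Omega^{d-1}$, quote the classical Auslander--Bridger duality for $\Tr$, and reduce to showing that $\Omega^{d-1}\colon\sM\to\smod\Lambda$ is fully faithful via two chains of dimension-shifting isomorphisms $\sHom_\Lambda(\Omega^{d-1}M,\Omega^{d-1}N)\cong\Ext_\Lambda^{d-1}(M,\Omega^{d-1}N)\cong\sHom_\Lambda(M,N)$. Both proofs hinge on exactly the same vanishing $\Ext_\Lambda^i(\M,\Lambda)=0$ (your pushout argument identifying maps that extend to $P$ with maps factoring through a projective, and your descent along $0\to\Omega^{k+1}N\to P_k\to\Omega^kN\to 0$, each consume precisely this hypothesis), so neither is more general. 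What your decomposition buys is a clean separation of the new content ($\Omega^{d-1}$ is fully faithful on $\sM$) from the classical $d=1$ case; what it costs is the naturality check you flag at the end, namely that your composite of connecting isomorphisms agrees with the map induced by $\Omega^{d-1}$. That check is genuinely routine --- the image of $f$ under the connecting map $\Hom_\Lambda(M,N)\to\Ext_\Lambda^1(M,\Omega N)$ is the class of the pullback extension, which corresponds under the second identification to $\Omega f$ --- but it should be carried out, and it is essentially the same lifting-of-resolutions computation that the paper's direct proof performs in one pass.
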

\begin{proof}
  Let $M,N\in\M$ and consider commutative diagram
  \begin{equation*}
    \begin{tikzcd}
      P^d\rar\dar&\cdots\rar&P^1\rar\dar&P^0\rar\dar&M\rar\dar{f}&0\\
      Q^d\rar&\cdots\rar&Q^1\rar&Q^0\rar&N\rar&0
    \end{tikzcd}
  \end{equation*}
  where the rows give minimal projective resolutions of $M$ and $N$. Applying
  the functor $\Hom_\Lambda(-,\Lambda)$ to this diagram yields a commutative
  diagram
  \begin{equation*}
    \begin{tikzcd}[column sep=small]
      \Hom_\Lambda(Q^0,\Lambda)\rar\dar&\cdots\rar&\Hom_\Lambda(Q^{d-1},\Lambda)\rar\dar&\Hom_\Lambda(Q^d,\Lambda)\rar\dar&\Tr_dN\dar{\Tr_df}\rar\dar&0\\
      \Hom_\Lambda(P^0,\Lambda)\rar&\cdots\rar&\Hom_\Lambda(P^{d-1},\Lambda)\rar&\Hom_\Lambda(P^d,\Lambda)\rar&\Tr_dM\rar&0
    \end{tikzcd}
  \end{equation*}
  whose rows give minimal projective resolutions of $\Tr_d M$ and $\Tr_d N$.
  Indeed, for $i\in\set{1,\dots,d-1}$ the cohomology of the top row at
  $\Hom_\Lambda(Q^i,\Lambda)$ is isomorphic to $\Ext_\Lambda^i(N,\Lambda)$ which
  vanishes by assumption. Similarly, for $i\in\set{1,\dots,d-1}$ the cohomology
  of the top row at $\Hom_\Lambda(P^i,\Lambda)$ is isomorphic to
  $\Ext_\Lambda^i(M,\Lambda)$ which vanishes by assumption. Since
  $\Hom_\Lambda(-,\Lambda)$ restricts to a duality between projective
  $\Lambda$-modules and projective $\Lambda^\op$-modules, the claim follows.
\end{proof}

We recall the following homological result which provides some motivation for
the introduction of the higher transpose.

\begin{proposition}[(see Proposition 1.1.3 in \cite{MR2298819})]
  \label{Tor-Ext_iso} Let $\M$ be a generating $d$-rigid subcategory of
  $\mmod\Lambda$. Then, for every $X\in\M$, every $M\in\mmod\Lambda$, and each
  $i\in\set{1,\dots,d-1}$ there are functorial isomorphisms
  \begin{equation*}
    \Tor_{d-i}^\Lambda(M,\Tr_dX)\cong\Ext_\Lambda^i(X,M)
  \end{equation*}
\end{proposition}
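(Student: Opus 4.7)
The plan is to compute both $\Ext_\Lambda^i(X,M)$ and $\Tor_{d-i}^\Lambda(M,\Tr_d X)$ from a single complex of finitely generated projective left $\Lambda$-modules --- obtained by applying $\Hom_\Lambda(-,\Lambda)$ to a projective resolution of $X$ --- and then to match them via dimension shifting. Fix a projective resolution $\cdots\to P^d\to\cdots\to P^0\to X\to 0$ by finitely generated projective right $\Lambda$-modules, and set $Q^j:=\Hom_\Lambda(P^j,\Lambda)$. Since $\M$ is generating we have $\Lambda\in\M$, and so the $d$-rigidity of $\M$ yields $\Ext_\Lambda^j(X,\Lambda)=0$ for $j\in\{1,\dots,d-1\}$. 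Consequently the sequence
\begin{equation*}
  0\to\Hom_\Lambda(X,\Lambda)\to Q^0\to Q^1\to\cdots\to Q^d\to\Tr_d X\to 0
\end{equation*}
is exact, the last step being the very definition of $\Tr_d X$. I would then split this into short exact sequences $0\to K^j\to Q^j\to K^{j+1}\to 0$ for $j=0,\dots,d$, where $K^0:=\Hom_\Lambda(X,\Lambda)$, $K^{d+1}:=\Tr_d X$, and $K^j:=\operatorname{im}(Q^{j-1}\to Q^j)$ for $1\leq j\leq d$.

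For the $\Ext$ side, the standard natural isomorphism $\Hom_\Lambda(P,M)\cong M\otimes_\Lambda\Hom_\Lambda(P,\Lambda)$, valid for every finitely generated projective $P$, identifies the cochain complex $\Hom_\Lambda(P^\bullet,M)$ with $M\otimes_\Lambda Q^\bullet$ and therefore yields $\Ext_\Lambda^i(X,M)\cong H^i(M\otimes_\Lambda Q^\bullet)$ naturally in both arguments. For the $\Tor$ side, applying $M\otimes_\Lambda-$ to each short exact sequence above and using the flatness of every $Q^j$ produces the dimension shift $\Tor_k^\Lambda(M,K^{j+1})\cong\Tor_{k-1}^\Lambda(M,K^j)$ for all $k\geq 2$; iterating from $K^{d+1}=\Tr_d X$ yields
\begin{equation*}
  \Tor_{d-i}^\Lambda(M,\Tr_d X)\cong\Tor_1^\Lambda(M,K^{i+2})
\end{equation*}
for every $i\in\{1,\dots,d-1\}$, interpreted as the identity when $i=d-1$.

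It remains to identify $H^i(M\otimes_\Lambda Q^\bullet)$ with $\Tor_1^\Lambda(M,K^{i+2})$ in the same range, which I would do by a short diagram chase. The long exact sequence of $\Tor$ attached to $0\to K^{i+1}\to Q^{i+1}\to K^{i+2}\to 0$ identifies $\Tor_1^\Lambda(M,K^{i+2})$ with the kernel of $M\otimes_\Lambda K^{i+1}\to M\otimes_\Lambda Q^{i+1}$; pulling this kernel back through the surjection $M\otimes_\Lambda Q^i\to M\otimes_\Lambda K^{i+1}$ produced from $0\to K^i\to Q^i\to K^{i+1}\to 0$ by right exactness of the tensor product then exhibits it as the kernel of $M\otimes_\Lambda Q^i\to M\otimes_\Lambda Q^{i+1}$ modulo the image of $M\otimes_\Lambda K^i\to M\otimes_\Lambda Q^i$, which by a further application of right exactness coincides with the image of $M\otimes_\Lambda Q^{i-1}\to M\otimes_\Lambda Q^i$. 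Composing the two isomorphisms gives the natural identification asserted in the statement. I do not anticipate a genuine conceptual obstacle; the only real care needed is in keeping the two indexings straight and in verifying naturality in both variables, which is automatic once the diagrams are fixed.
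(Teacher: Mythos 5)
Your argument is correct, and it is precisely the standard dimension-shifting proof that the paper has in mind when it defers to Proposition 1.1.3 of Iyama's article: both sides are computed from the complex $Q^\bullet=\Hom_\Lambda(P^\bullet,\Lambda)$, whose exactness in degrees $1,\dots,d-1$ is exactly where the generating hypothesis ($\Lambda\in\M$, so $\Ext_\Lambda^j(X,\Lambda)=0$) enters. The index bookkeeping checks out, including the degenerate case $i=d-1$, so nothing further is needed.
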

\begin{proof}
  The proof is straightforward. We refer the reader to Proposition 1.1.3 in
  \cite{MR2298819} for details.
\end{proof}

The following result is an easy consequence of \ref{Tor-Ext_iso}.

\begin{corollary}
  \label{tensor_Trd_is_right_d-exact} Let $\M$ be a generating-cogenerating
  $d$-rigid subcategory of $\mmod\Lambda$ and
  \begin{equation*}
    \begin{tikzcd}[column sep=small]
      \delta\colon&0\rar&L\rar&M^1\rar&\cdots\rar&M^d\rar&N\rar&0
    \end{tikzcd}
  \end{equation*}
  a $d$-exact sequence in $\M$. Then, for each $X\in\M$ the induced sequence
  \begin{equation*}
    \begin{tikzcd}[column sep=small]
      L\otimes_\Lambda\Tr_dX\rar&M^1\otimes_\Lambda\Tr_dX\rar&\cdots\rar&M^d\otimes_\Lambda\Tr_dX\rar&N\otimes_\Lambda\Tr_dX\rar&0
    \end{tikzcd}
  \end{equation*}
  is exact in $\mmod\Lambda$.
\end{corollary}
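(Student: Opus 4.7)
The plan is to split the $d$-exact sequence into short exact sequences and chase the resulting Tor long exact sequences, translating every Tor-vanishing question into an Ext-vanishing question via Proposition \ref{Tor-Ext_iso} and then invoking $d$-rigidity. By \ref{exact_d-sequences_are_exact}, $\delta$ is exact in $\mmod\Lambda$, so setting $K^0 := L$, $K^d := N$, and $K^i := \operatorname{im}(M^i \to M^{i+1})$ for $1 \le i \le d-1$ produces short exact sequences
\[
0 \to K^{i-1} \to M^i \to K^i \to 0, \qquad i \in \set{1,\dots,d}.
\]
Writing $T := \Tr_d X$, a routine splicing argument shows that exactness of the tensored sequence at $M^i \otimes_\Lambda T$ is equivalent to injectivity of $K^i \otimes_\Lambda T \to M^{i+1} \otimes_\Lambda T$, which follows once $\Tor_1^\Lambda(K^{i+1}, T) = 0$ (using also $\Tor_1^\Lambda(M^{i+1}, T) = 0$); right-exactness at $N \otimes_\Lambda T$ is automatic. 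Thus the corollary reduces to the vanishing of $\Tor_1^\Lambda(K^j, T)$ for $j \in \set{2,\dots,d}$.

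The main ingredient is Proposition \ref{Tor-Ext_iso}: for every $Y \in \M$ and $k \in \set{1,\dots,d-1}$,
\[
\Tor_k^\Lambda(Y, T) \cong \Ext_\Lambda^{d-k}(X, Y),
\]
which vanishes by $d$-rigidity. In particular $\Tor_k^\Lambda(M^j, T) = 0$ and $\Tor_k^\Lambda(N, T) = 0$ throughout this range. For $j = d$ the required vanishing is the case $k = 1$. For $j \in \set{2,\dots,d-1}$, iterating the dimension-shift isomorphism
\[
\Tor_k^\Lambda(K^{j-1}, T) \cong \Tor_{k+1}^\Lambda(K^j, T) \qquad (1 \le k \le d-2),
\]
obtained from the Tor long exact sequence of $0 \to K^{j-1} \to M^j \to K^j \to 0$ by squeezing, gives $\Tor_1^\Lambda(K^j, T) \cong \Tor_{d-j}^\Lambda(K^{d-1}, T)$. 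The Tor long exact sequence of $0 \to K^{d-1} \to M^d \to N \to 0$ then squeezes this latter group between $\Tor_{d-j+1}^\Lambda(N, T) = 0$ and $\Tor_{d-j}^\Lambda(M^d, T) = 0$, which closes the argument.

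The case $d = 1$ requires no argument, since then $\delta$ is already a short exact sequence and $-\otimes_\Lambda T$ is right exact. The only real nuisance, rather than a genuine obstacle, will be bookkeeping the indices so that every dimension shift and every squeeze stays within the range $\set{1,\dots,d-1}$ where Proposition \ref{Tor-Ext_iso} is available; in particular the endpoints $j=d$ and $j=2$ need to be checked directly, but both fall out of the same Ext-vanishings.
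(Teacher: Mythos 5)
Your argument is correct and is essentially the paper's own proof, which invokes ``an argument similar to that used in the proof of \ref{d-rigid_long_exact}'': both reduce exactness of the tensored complex to the vanishing of $\Tor$ groups of the syzygies $K^j$ against $\Tr_dX$, obtained by dimension shifting along the short exact sequences $0\to K^{i-1}\to M^i\to K^i\to 0$ and the vanishing $\Tor_k^\Lambda(Y,\Tr_dX)\cong\Ext_\Lambda^{d-k}(X,Y)=0$ for $Y\in\M$ supplied by \ref{Tor-Ext_iso} and $d$-rigidity. You have merely written out in full the index bookkeeping that the paper leaves implicit.
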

\begin{proof}
  In view of \ref{Tor-Ext_iso}, for each $i\in\set{1,\dots,d-1}$ and for all
  $M'\in\M$ the equality
  $\Tor_{d-i}^\Lambda(M',\Tr_dX)\cong\Ext_\Lambda^i(X,M')=0$ holds. By an
  argument similar to that used in the proof of \ref{d-rigid_long_exact} and
  \ref{Tor-Ext_iso} the cohomology of the complex
  \begin{equation*}
    \begin{tikzcd}[column sep=small]
      L\otimes_\Lambda\Tr_dX\rar&M^1\otimes_\Lambda\Tr_dX\rar&\cdots\rar&M^d\otimes_\Lambda\Tr_dX\rar&N\otimes_\Lambda\Tr_dX\rar&0
    \end{tikzcd}
  \end{equation*}
  at $M^i\otimes_\Lambda\Tr_dX$ is isomorphic to
  $\Tor_{d-i}^\Lambda(N,\Tr_dX)\cong\Ext_\Lambda^i(X,N)=0$.
\end{proof}

Consider the functors
\begin{equation*}
  \tau_d:=D\Tr_d\colon\smod\Lambda\to\ismod\Lambda\quad\text{and}\quad\tau_d^-:=\Tr_dD\colon\ismod\Lambda\to\smod\Lambda
\end{equation*}
which are higher analogues of the Auslander--Reiten translation and its inverse.
Note that $\tau_d$ is right adjoint to $\tau_d^-$, see Theorem 2.3.1(1) in
\cite{MR2298820}.

\begin{theorem}[(see Theorem 2.3 in \cite{MR2298819})]
  \label{taud_eq}
  Let $\M$ be a $d$-cluster-tilting subcategory of $\mmod\Lambda$. Then,
  \begin{equation*}
    \begin{tikzcd}[column sep=small]
      \tau_d\colon\sM\rar&\iM
    \end{tikzcd}\quad\text{and}\quad\begin{tikzcd}
      \tau_d^{-}\colon\iM\rar&\sM
    \end{tikzcd}
  \end{equation*}
  are mutually inverse equivalences.
\end{theorem}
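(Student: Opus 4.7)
The plan is to show (i) that $\tau_d$ and $\tau_d^-$ restrict to functors between $\sM$ and $\iM$, and (ii) that these restrictions are mutually quasi-inverse equivalences.

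For (i), the key tool is a higher Auslander--Reiten formula derived from \ref{Tor-Ext_iso}. Since $D = \Hom_R(-, I)$ is exact (as $I$ is injective), there is a natural isomorphism $D\Tor_k^\Lambda(M, -) \cong \Ext_\Lambda^k(M, D(-))$ for each $k \geq 0$. Composing this with \ref{Tor-Ext_iso} yields, for $X \in \M$, $M \in \mmod\Lambda$, and $k \in \set{1,\dots,d-1}$, a natural isomorphism
\[
  \Ext_\Lambda^k(M, \tau_d X) \cong D\Ext_\Lambda^{d-k}(X, M).
\]
When $M \in \M$, the right-hand side vanishes by the $d$-rigidity of $\M$. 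Since $\M$ is $d$-cluster-tilting, this forces $\tau_d X \in \M$, so $\tau_d$ factors through $\iM$. Applying the same argument to the $d$-cluster-tilting subcategory $D\M$ of $\mmod\Lambda^{\op}$ shows that $\tau_d^-$ factors through $\sM$.

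For (ii), by \ref{Tr_equivalence} the functor $\Tr_d\colon \sM \to \smod\Lambda^{\op}$ is fully faithful, and $D$ induces an equivalence $\smod\Lambda^{\op} \to \ismod\Lambda$ (as it is a duality interchanging projective and injective modules). Hence $\tau_d = D\Tr_d\colon \sM \to \iM$ is fully faithful, and likewise for $\tau_d^-\colon \iM \to \sM$. Moreover, since $D^2 \cong \id$, the two compositions satisfy $\tau_d^- \tau_d \cong \Tr_d \Tr_d$ on $\sM$ and $\tau_d \tau_d^- \cong D\,\Tr_d \Tr_d\, D$ on $\iM$. It therefore suffices to verify that $\Tr_d \Tr_d X \cong X$ naturally in $X \in \M$; the other identity follows by the analogous argument applied to $D\M$.

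To establish $\Tr_d \Tr_d X \cong X$, I fix a projective resolution $\cdots \to P_1 \to P_0 \to X \to 0$. Since $\Lambda \in \M$ and $\M$ is $d$-rigid, $\Ext_\Lambda^i(X, \Lambda) = 0$ for all $i \in \set{1,\dots,d-1}$, so applying $(-)^* := \Hom_\Lambda(-, \Lambda)$ yields an exact sequence
\[
  0 \to X^* \to P_0^* \to P_1^* \to \cdots \to P_d^* \to \Tr_d X \to 0
\]
in $\mmod\Lambda^{\op}$. From this sequence one reads off a projective presentation $P_0^* \to P_1^* \to \Omega^{d-1}_{\Lambda^{\op}}(\Tr_d X) \to 0$; since $(-)^{**}$ is naturally the identity on finitely generated projectives, one concludes $\Tr_d \Tr_d X = \Tr \Omega^{d-1}_{\Lambda^{\op}}(\Tr_d X) = \coker(P_1 \to P_0) \cong X$ in $\sM$, with naturality inherited from the uniqueness up to homotopy of chain maps between projective resolutions. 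The principal obstacle is this last step: tracking the $(d-1)$-th syzygy of $\Tr_d X$ through the displayed long exact sequence in order to identify its transpose with $X$; once this computation is in place, the remaining parts of the argument reduce to assembling \ref{Tor-Ext_iso}, \ref{Tr_equivalence}, and standard Ext--Tor duality.
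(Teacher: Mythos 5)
Your argument is correct, and its skeleton coincides with the paper's: reduce, via \ref{Tr_equivalence}, to showing that $\tau_d$ and $\tau_d^{-}$ carry $\M$ back into $\M$, and deduce this from the vanishing of $\Ext_\Lambda^k(\M,\tau_dX)$ for $k\in\set{1,\dots,d-1}$ together with the defining property of a $d$-cluster-tilting subcategory. The two proofs differ in how they reach that vanishing and in how much of the equivalence they make explicit. The paper establishes $\Ext_\Lambda^{d-i}(N,\tau_dX)\cong D\Ext_\Lambda^i(X,N)$ by hand: it applies $D\Hom_\Lambda(-,\Lambda)$ to a minimal projective resolution of $X$, uses $\Ext_\Lambda^i(X,\Lambda)=0$ to check that the result is an injective coresolution of $\tau_dX$, and compares the two ways of computing cohomology. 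You obtain the same isomorphism formally by composing \ref{Tor-Ext_iso} with the standard duality $D\Tor_k^\Lambda(M,-)\cong\Ext_\Lambda^k(M,D(-))$; this is shorter and actually puts \ref{Tor-Ext_iso} to work, whereas the paper states that proposition but does not invoke it in its own proof of \ref{taud_eq}. In the other direction, you make explicit a step the paper leaves buried in \ref{Tr_equivalence}: that $\Tr_d\Tr_dX\cong X$ in $\sM$, so that the two restrictions are genuinely mutually quasi-inverse and not merely fully faithful with images in the correct subcategories. Your computation of $\Omega^{d-1}(\Tr_dX)$ from the exact sequence obtained by dualizing the projective resolution of $X$ is precisely the content of the displayed diagram in the proof of \ref{Tr_equivalence}, and the identification of its transpose with $X$ (up to projective summands, hence in $\sM$) is sound. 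Both routes are complete; the paper's yields the explicit injective coresolution of $\tau_dX$ as a by-product, while yours isolates the formal inputs more cleanly.
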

\begin{proof}
  In view of \ref{Tr_equivalence} it is enough to show that for all $M\in\M$ the
  objects $\tau_d M$ and $\tau_d^-M$ also lie in $\M$. We claim that for each
  $i\in\set{1,\dots,d-1}$ and for all $N\in\M$ there is an isomorphism
  \begin{equation*}
    \Ext_\Lambda^{d-i}(N,\tau_d M)\cong D\Ext_\Lambda^i(M,N)=0.
  \end{equation*}
  Indeed, let
  \begin{equation*}
    \begin{tikzcd}[column sep=small]
      P^{-d}\rar&\cdots\rar&P^{-1}\rar&P^0\rar&M\rar&0
    \end{tikzcd}
  \end{equation*}
  be a minimal projective resolution of $M$. By definition, there is a complex
  \begin{equation*}
    \begin{tikzcd}[column sep=small]
      0\rar&\tau_d
      M\rar&D(P^{-d},\Lambda)\rar&\cdots\rar&D(P^{-1},\Lambda)\rar&D(P^{0},\Lambda)
    \end{tikzcd}
  \end{equation*}
  which is exact. In fact, for all $i\in\set{1,\dots,d-1}$ the cohomology of the
  above complex at $D\Hom_\Lambda(P^{-i},\Lambda)$ is isomorphic to
  $D\Ext_\Lambda^i(M,\Lambda)$ and hence vanishes since $M$ and $\Lambda$ belong
  to $\M$. We conclude that this sequence gives an injective coresolution of
  $\tau_dM$. Note that there is an isomorphism of complexes
  \begin{equation*}
    \begin{tikzcd}[column sep=small]
      (N,D(P^{-d},\Lambda))\rar\dar&\cdots\rar&(N,D(P^{-1},\Lambda))\rar\dar&(N,D(P^{0},\Lambda))\dar\\
      D(P^{-d},N)\rar&\cdots\rar&D(P^{-1},N)\rar&D(P^0,N).
    \end{tikzcd}
  \end{equation*}
  By definition, for each $i\in\set{1,\dots,d-1}$ the cohomology of the top row
  at
  \begin{equation*}
    \Hom_\Lambda(N,D\Hom_\Lambda(P^{-i},\Lambda))
  \end{equation*}
  is isomorphic to $\Ext_\Lambda^{d-i}(N,\tau_dM)$. Also, since $D$ is an exact
  functor, the cohomology of the bottom row at
  \begin{equation*}
    D\Hom_\Lambda(P^{-i},N)
  \end{equation*}
  is isomorphic to $\Ext_\Lambda^i(M,N)$. The claim follows. Dually, one can
  show that $\tau_d^-M\in\M$. This finishes the proof of the theorem.
\end{proof}

The following result, a higher analogue of Auslander's defect formula, is given
as Lemma 3.2 in \cite{MR2298819} where it is proven as a consequence of the
higher Auslander--Reiten duality formula.

\begin{theorem}[(Auslander's defect formula)] \label{defect_formula} Let $\M$ be
  a $d$-cluster-tilting subcategory of $\mmod\Lambda$. Then, there is a
  bifunctorial isomorphism between
  \begin{equation*}
    \begin{tikzcd}[column sep=small, row sep=tiny]
      \KdEx(\M)^\op\times \sM\rar&\mmod R\\
      (\delta,X)\rar[mapsto]&D\delta^*(X)
    \end{tikzcd}\qquad\text{and}\qquad
    \begin{tikzcd}[column sep=small, row sep=tiny]
      \KdEx(\M)^\op\times \sM\rar&\mmod R\\
      (\delta,X)\rar[mapsto]&\delta_*(\tau_dX).
    \end{tikzcd}
  \end{equation*}
\end{theorem}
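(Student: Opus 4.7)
The plan is to transport Krause's argument from \cite{MR1987342} to the higher setting. Writing $\delta\colon 0 \to L \to M^1 \to \cdots \to M^d \to N \to 0$, I first tensor $\delta$ with $\Tr_dX$; by \ref{tensor_Trd_is_right_d-exact} the result is exact from $M^1 \otimes_\Lambda \Tr_d X$ onward, so letting $K$ denote the kernel of the leftmost map yields an exact sequence
\[
0 \to K \to L\otimes_\Lambda\Tr_dX \to M^1\otimes_\Lambda\Tr_dX \to \cdots \to M^d\otimes_\Lambda\Tr_dX \to N\otimes_\Lambda\Tr_dX \to 0.
\]
Applying the exact duality $D$ and using the tensor--hom adjunction $D(M\otimes_\Lambda\Tr_dX)\cong \Hom_\Lambda(M,D\Tr_dX)=\Hom_\Lambda(M,\tau_dX)$ converts this into
\[
0 \to (N,\tau_dX) \to (M^d,\tau_dX) \to \cdots \to (M^1,\tau_dX) \to (L,\tau_dX) \to DK \to 0,
\]
so by the definition of $\delta_*$ we have $\delta_*(\tau_dX)\cong DK$. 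The theorem then reduces to producing a natural isomorphism $K\cong\delta^*(X)$.

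To identify $K$, I splice $\delta$ into short exact sequences $0\to K^i\to M^{i+1}\to K^{i+1}\to 0$ (with $K^0=L$ and $K^d=N$) and chase the long exact sequences of $\Tor_\bullet^\Lambda(-,\Tr_dX)$. The vanishings $\Tor_j^\Lambda(M^{i+1},\Tr_dX)\cong \Ext_\Lambda^{d-j}(X,M^{i+1})=0$ for $j\in\set{1,\dots,d-1}$ supplied by \ref{Tor-Ext_iso} and the $d$-rigidity of $\M$ permit the successive dimension shifts needed to arrive at a natural isomorphism
\[
K \cong \coker\bigl(\Tor_d^\Lambda(M^d,\Tr_dX)\to\Tor_d^\Lambda(N,\Tr_dX)\bigr).
\]

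The key computation is then the identification of the functor $\Tor_d^\Lambda(-,\Tr_dX)$. Since both $X$ and $\Lambda$ lie in $\M$, the vanishings $\Ext_\Lambda^i(X,\Lambda)=0$ for $i\in\set{1,\dots,d-1}$ make the complex
\[
0\to \Hom_\Lambda(X,\Lambda)\to \Hom_\Lambda(P^0,\Lambda)\to\cdots\to \Hom_\Lambda(P^d,\Lambda)\to\Tr_dX\to 0
\]
exact, where $P^\bullet\to X$ is a projective resolution of $X$. Splicing this with a projective cover of $\Hom_\Lambda(X,\Lambda)$ produces a projective resolution of $\Tr_dX$ from which a direct computation shows that $\Tor_d^\Lambda(M,\Tr_dX)$ is naturally isomorphic to the cokernel of the evaluation map $M\otimes_\Lambda\Hom_\Lambda(X,\Lambda)\to\Hom_\Lambda(X,M)$; that is, to $\Hom_\Lambda(X,M)$ modulo the subgroup of morphisms factoring through a projective. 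Since $M^d\to N$ is an epimorphism, any morphism $X\to N$ factoring through a projective also factors through $M^d$ by projectivity, so the cokernel displayed above coincides with $\delta^*(X)$. Bifunctoriality in $(\delta,X)$ is immediate from the naturality of every construction used.

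The main technical obstacle is precisely this last identification $\Tor_d^\Lambda(M,\Tr_dX)\cong \Hom_\Lambda(X,M)$ modulo morphisms factoring through a projective, which is the higher analogue of the classical isomorphism that underlies Krause's proof; once it is in place the remainder amounts to bookkeeping with long exact sequences of $\Tor$, the exactness of $D$, and the hom--tensor adjunction.
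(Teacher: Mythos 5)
Your argument is correct, and it shares its skeleton with the paper's proof: both reduce the theorem to the natural isomorphism $\delta^*(X)\cong\Ker(f\otimes_\Lambda\Tr_dX)$ and then conclude by applying the duality $D$ together with the adjunction $D(-\otimes_\Lambda\Tr_dX)\cong\Hom_\Lambda(-,D\Tr_dX)=\Hom_\Lambda(-,\tau_dX)$. Where you genuinely diverge is in how that kernel is identified. The paper assembles the exact sequences $0\to(X,M)\to(P^0,M)\to\cdots\to(P^{-d},M)\to M\otimes_\Lambda\Tr_dX\to0$, for $M$ running over the terms of $\delta$, into one large double complex and extracts the isomorphism by a Salamander-lemma chase. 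You instead splice $\delta$ into short exact sequences, dimension-shift along the long exact sequences of $\Tor^\Lambda_\bullet(-,\Tr_dX)$ using the vanishing $\Tor_j^\Lambda(\M,\Tr_dX)=0$ for $j\in\set{1,\dots,d-1}$ supplied by \ref{Tor-Ext_iso}, and then prove the explicit identity $\Tor_d^\Lambda(M,\Tr_dX)\cong\sHom_\Lambda(X,M)$ from a projective resolution of $\Tr_dX$; your observation that $\mathcal{P}(X,N)\subseteq\Img\Hom_\Lambda(X,g)$ because $g$ is an epimorphism is exactly what is needed to pass from the resulting stable cokernel to $\delta^*(X)$. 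That $\Tor_d$--$\sHom$ identity is the higher analogue of the classical $\Tor_1^\Lambda(M,\Tr X)\cong\sHom_\Lambda(X,M)$; it is a reusable fact (it yields \ref{ar_formula} almost immediately and makes transparent why $\delta^*$ descends to $\sM$), which is what your route buys. The cost is that you must check naturality of the connecting homomorphisms and of the resolution-dependent isomorphism in both variables, and treat $d=1$ separately since the index set $\set{1,\dots,d-1}$ is then empty (there the single short exact sequence gives the cokernel description directly); the paper's one-diagram chase packages all of this naturality at once. Both arguments are faithful adaptations of Krause's proof, and yours is complete once these routine verifications are recorded.
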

\begin{proof}
  The proof is an adaptation of the proof of the main theorem in
  \cite{MR1987342}. First, we observe that $\delta^*(\Lambda)=0$, hence
  $\delta^*$ induces a functor on $\sM$. Let $X\in\M$ and
  \begin{equation*}
    \begin{tikzcd}[column sep=small]
      P^{-d}\rar&\dots\rar&P^{-1}\rar&P^0\rar&X\rar&0
    \end{tikzcd}
  \end{equation*}
  be a minimal projective resolution of $X$. Let $M\in\M$. Applying the functor
  $M\otimes_\Lambda\Hom_\Lambda(-,\Lambda)$ to this sequence yields a
  commutative diagram
  \begin{equation*}
    \resizebox{\textwidth}{!}{    
      \begin{tikzcd}[column sep=tiny, ampersand replacement=\&]
        \&\&M\otimes_\Lambda (P^0,\Lambda)\rar\dar\&M\otimes_\Lambda(P^{-1},\Lambda)\dar\rar\&\cdots\rar\&M\otimes_\Lambda (P^{-d},\Lambda)\rar\dar\&M\otimes_\Lambda\Tr_dX\dar[equals]\rar\&0\\
        0\rar\&(X,M)\rar\&(P^0,M)\rar\&(P^{-1},M)\rar\&\cdots\rar\&(P^{-d},M)\rar\&M\otimes_\Lambda\Tr_dX\rar\&0
      \end{tikzcd}}
  \end{equation*}
  in which the vertical arrows are the canonical isomorphisms. Note also that
  the bottom row of this diagram is exact, since by definition its cohomology at
  $\Hom_\Lambda(P^{-i},M)$ is isomorphic to $\Ext^i_\Lambda(X,M)$ which vanishes
  for $i\in\set{1,\dots,d-1}$.

  Let
  \begin{equation*}
    \begin{tikzcd}[column sep=small]
      \delta\colon&0\rar&L\rar{f}&M^1\rar&\cdots\rar&M^d\rar&N\rar&0
    \end{tikzcd}
  \end{equation*}
  be a $d$-exact sequence in $\M$. We obtain the following commutative diagram
  \begin{center}
    \resizebox{\textwidth}{!}{
      \begin{tikzcd}[column sep=small, row sep=small, ampersand replacement=\&]
        \&0\dar\&0\dar\&0\dar\&\&0\dar\&\Ker(f\otimes_\Lambda \Tr_d X)\dar[tail]\\
        0\rar\&(X,L)\rar\dar\&(P^0,L)\rar\dar\&(P^{-1},L)\rar\dar\&\cdots\rar\&(P^{-d},L)\rar\dar\&L\otimes_\Lambda \Tr_d X\rar\dar\&0\\
        0\rar\&(X,M^1)\rar\dar\&(P^0,M^1)\rar\dar\&(P^{-1},M^1)\rar\dar\&\cdots\rar\&(P^{-d},M^1)\rar\dar\&M^1\otimes_\Lambda \Tr_d X\rar\dar\&0\\
        \&\vdots\dar\&\vdots\dar\&\vdots\dar\&\&\vdots\dar\&\vdots\dar\\
        0\rar\&(X,M^d)\rar\dar\&(P^0,M^d)\rar\dar\&(P^{-1},M^d)\rar\dar\&\cdots\rar\&(P^{-d},M^d)\rar\dar\&M^d\otimes_\Lambda \Tr_d X\rar\dar\&0\\
        0\rar\&(X,N)\rar\dar[two heads]\&(P^0,N)\rar\dar\&(P^{-1},N)\rar\dar\&\cdots\rar\&(P^{-d},N)\rar\dar\&N\otimes_\Lambda \Tr_d X\rar\dar\&0\\
        \&\delta^*(X)\&0\&0\&\&0\&0
      \end{tikzcd}
    }
  \end{center}
  with exact rows and columns (the exactness of the rightmost column follows
  from \ref{tensor_Trd_is_right_d-exact}). A Snake Lemma-type diagram chase (for
  example, one can use the Salamander Lemma 1.7 in \cite{MR2909639}) shows that
  there is a functorial isomorphism $\delta^*(X)\cong \Ker(f\otimes_\Lambda
  \Tr_d X)$ as functors on $\sM$. Finally, we have a sequence of isomorphisms
  \begin{align*}
    D\delta^*(X)&\cong D\Ker(f\otimes_\Lambda \Tr_d X)\\
                &\cong\Coker D(f\otimes_\Lambda \Tr_d X)\\
                &\cong\Coker \Hom_\Lambda(f,D\Tr_d X)=\delta_*(\tau_dX).
  \end{align*}
  It is straightforward to verify that this ismorphism is functorial in
  $\delta$.
\end{proof}

As an immediate consequence of \ref{defect_formula}, we obtain Iyama's higher
Auslander--Reiten duality formula given as Theorem 2.3.1 in \cite{MR2298819}.

\begin{corollary}[(Higher Auslander--Reiten duality formula)]
  \label{ar_formula} Let $\M$ be a $d$-cluster-tilting subcategory of
  $\mmod\Lambda$. Then, for $X,Y\in\M$ there is a bifunctorial isomorphism
  \begin{equation*}
    D\sHom_\Lambda(X,Y)\cong \Ext_\Lambda^d(Y,\tau_dX).
  \end{equation*}
\end{corollary}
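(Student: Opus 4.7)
The plan is to reduce the statement to Theorem \ref{defect_formula} by exhibiting, for each fixed $Y\in\M$, a particular $d$-exact sequence $\delta$ whose contravariant defect computes $\sHom_\Lambda(-,Y)$ and whose covariant defect computes $\Ext^d_\Lambda(Y,-)$.

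First, I would fix $Y\in\M$ and choose a projective cover $p\colon P\to Y$. Since $P\in\M$ (as $\M$ is generating), Proposition \ref{minimal}\ref{it:exists} produces a left $d$-exact sequence
\begin{equation*}
  \begin{tikzcd}[column sep=small]
    0\rar&L\rar&M^1\rar&\cdots\rar&M^{d-1}\rar&P\rar{p}&Y
  \end{tikzcd}
\end{equation*}
in $\M$, and since $p$ is an epimorphism, Proposition \ref{d-ct_a2} upgrades this to a $d$-exact sequence $\delta$ in $\M$ (with $L\to\ker p$ surjective, hence the full sequence is short exact at the right).

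Next, I would compute both defects. For $\delta^*(X)$: by definition it is the cokernel of $(X,P)\to(X,Y)$. Because $p$ is a projective cover, any map $X\to Y$ that factors through a projective factors through $P$, and conversely every map that factors through $P$ factors through a projective. Thus $\delta^*(X)\cong\sHom_\Lambda(X,Y)$ functorially in $X\in\sM$. For $\delta_*(\tau_dX)$: since $\tau_dX\in\M$ by Theorem \ref{taud_eq} and $\M$ is $d$-rigid, the second long exact sequence of Proposition \ref{d-rigid_long_exact} applied to $\delta$ and $Y':=\tau_dX$ gives
\begin{equation*}
  \begin{tikzcd}[column sep=tiny]
    (M^1,\tau_dX)\rar&(L,\tau_dX)\rar&\Ext^d_\Lambda(Y,\tau_dX)\rar&\Ext^d_\Lambda(P,\tau_dX).
  \end{tikzcd}
\end{equation*}
Since $P$ is projective the rightmost term vanishes, so $\delta_*(\tau_dX)\cong\Ext^d_\Lambda(Y,\tau_dX)$ functorially in $X$. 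Applying Theorem \ref{defect_formula} to $\delta$ then yields the desired isomorphism for each $X,Y$.

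The main obstacle is to promote this into a bifunctor isomorphism, since $\delta$ depends on $Y$. Functoriality in $X$ is immediate from the bifunctoriality statement in Theorem \ref{defect_formula}. For functoriality in $Y$, given a morphism $h\colon Y\to Y'$ and the corresponding choices $p\colon P\to Y$, $p'\colon P'\to Y'$, I would lift $h$ to a morphism $P\to P'$ by projectivity and then use Proposition \ref{d-pullback_left_exact_d-sequence} (together with the standard iterated approximation argument underlying Proposition \ref{minimal}) to extend this to a morphism of $d$-exact sequences $\delta\to\delta'$ in $\M$. The induced maps on $\delta^*(X)$ and $\delta_*(\tau_dX)$ recover the natural transformations $\sHom_\Lambda(X,Y)\to\sHom_\Lambda(X,Y')$ and $\Ext^d_\Lambda(Y',\tau_dX)\to\Ext^d_\Lambda(Y,\tau_dX)$, respectively, and the bifunctoriality in the pair $(\delta,X)$ from Theorem \ref{defect_formula} then guarantees compatibility. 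An independence-of-choice check (two different projective covers of $Y$ yield isomorphic $\delta^*$ and $\delta_*$) can be carried out via the same lifting argument and shows that the resulting bifunctor isomorphism does not depend on the selected resolutions.
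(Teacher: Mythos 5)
Your proposal is correct and follows essentially the same route as the paper: both resolve $Y$ by a $d$-exact sequence in $\M$ ending in an epimorphism from a projective, identify $\delta^*(X)\cong\sHom_\Lambda(X,Y)$ via the lifting property of projectives and $\delta_*(\tau_dX)\cong\Ext_\Lambda^d(Y,\tau_dX)$ via the long exact sequence of \ref{d-rigid_long_exact} together with $\Ext_\Lambda^d(P,-)=0$, and then invoke \ref{defect_formula}. Your additional discussion of functoriality in $Y$ and independence of the chosen resolution is a reasonable elaboration of details the paper leaves implicit.
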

\begin{proof}
  Let $M^d\to Y$ be an epimorphism with $M^d$ a projective $\Lambda$-module. By
  \ref{minimal} and \ref{d-ct_a2}, there exists a $d$-exact sequence in $\M$
  \begin{equation*}
    \begin{tikzcd}[column sep=small]
      \delta\colon&0\rar&L\rar&M^1\rar&\cdots\rar&M^d\rar&Y\rar&0.
    \end{tikzcd}
  \end{equation*}
  Hence, by \ref{d-rigid_long_exact} and \ref{taud_eq} the functor
  $\Hom_\Lambda(-,\tau_dX)$ induces an exact sequence
  \begin{equation*}
    \begin{tikzcd}[column sep=small]
      \Hom_\Lambda(M^d,\tau_dX)\rar&\Hom_\Lambda(L,\tau_d
      X)\rar&\Ext_\Lambda^d(Y,\tau_d X)\rar&\Ext_\Lambda^d(M^d,\tau_d X)=0
    \end{tikzcd}
  \end{equation*}
  Therefore $\delta_*(\tau_dX)\cong \Ext_\Lambda^d(Y,\tau_dX)$. On the other
  hand, it follows from the lifting property of projective $\Lambda$-modules
  that $\delta^*(X)\cong\sHom_\Lambda(X,Y)$. The claim now follows from
  \ref{defect_formula}.
\end{proof}

We also obtain the following important consequence of the defect formula.

\begin{corollary}[(\cf Corollary IV.4.4 in \cite{MR1476671})]
  \label{factorization_defect_formula} Let $\M$ be a $d$-cluster-tilting
  subcategory of $\mmod\Lambda$ and
  \begin{equation*}
    \begin{tikzcd}[column sep=small]
      \delta\colon&0\rar&L\rar{f}&M^1\rar&\cdots\rar&M^d\rar{g}&N\rar&0
    \end{tikzcd}
  \end{equation*}
  a $d$-exact sequence in $\M$. For an object $X\in\M$, the following statements
  are equivalent.
  \begin{enumerate}
  \item\label{it:FTb} Every morphism $L\to X$ factors through $f$.
  \item\label{it:FTa} Every morphism $\tau_d^-X\to N$ factors through $g$.
  \end{enumerate}
\end{corollary}
\begin{proof}
  By definition, the functors $\Hom_\Lambda(-,X)$ and
  $\Hom_\Lambda(\tau_d^-X,-)$ induce exact sequences
  \begin{equation*}
    \begin{tikzcd}[column sep=small, row sep=tiny]
      \Hom_\Lambda(M^1,X)\rar&\Hom_\Lambda(L,X)\rar&\delta_*(X)\rar&0
    \end{tikzcd}
  \end{equation*}
  and
  \begin{equation*}
    \begin{tikzcd}[column sep=small, row sep=tiny]      
      \Hom_\Lambda(\tau_d^-X,M^d)\rar&\Hom_\Lambda(\tau_d^-X,N)\rar&\delta^*(\tau_d^-X)\rar&0
    \end{tikzcd}
  \end{equation*}
  in $\mmod R$. By the dual of \ref{defect_formula}, we have $\delta_*(X)=0$ if
  and only if $\delta^*(\tau_d^-X)=0$. The claim follows.
\end{proof}

\section{Morphisms determined by objects}
\label{sec:morphisms_determined_by_objects}

In this section we show that $d$-cluster-tilting subcategories of $\mmod\Lambda$
have right determined morphisms.


\begin{definition}[(see Section 1 in \cite{MR0480688})]
  Let $\X$ be a subcategory of $\mmod\Lambda$ and $X\in\X$. A morphism $g\colon
  M\to N$ in $\X$ is \emph{right $X$-determined in $\X$} if for every morphism
  $h\colon V\to N$ in $\X$ such that
  \begin{equation*}
    \Img\Hom_\Lambda(X,h)\subseteq\Img\Hom_\Lambda(X,g)
  \end{equation*}
  we have that $h$ factors through $g$. In such a case, we call $X$ a
  \emph{right determiner\footnote{We use Ringel's terminology, see
      \cite{MR3161362}.} of $g$ in $\X$}.
\end{definition}

\begin{remark}
  If $g\colon M\to N$ and $g'\colon M'\to N$ are right minimal right
  $X$-determined morphisms in some subcategory $\X$ of $\mmod\Lambda$ such that
  $\Img\Hom_{\Lambda}(X,g)=\Img\Hom_{\Lambda}(X,g')$, then there exists an
  isomorphism $h\colon M\to M'$ such that $g=g'\circ h$. In \ref{auslander2} we
  prove an existence theorem of right $X$-determined morphisms which complements
  this statement.
\end{remark}

The following result shows that every morphism in a $d$-cluster-tilting
subcategory of $\mmod\Lambda$ has an explicit determiner. In our setting, this
result gives further information than Corollary 3.8 in \cite{MR3183886} which
only establishes the existence of a determiner.

\begin{theorem}[(\cf Corollary XI.1.4 in \cite{MR1476671})]
  \label{auslander1} Let $\M$ be a $d$-cluster-tilting subcategory of
  $\mmod\Lambda$ and
  \begin{equation*}
    \begin{tikzcd}[column sep=small]
      0\rar&L\rar{f}&M^1\rar&\cdots\rar&M^d\rar{g}&N
    \end{tikzcd}
  \end{equation*}
  a left $d$-exact sequence in $\M$. Then, $\Lambda\oplus\tau_d^-L$ is a right
  determiner of $g$ in $\M$. If $g$ is an epimorphism, then $\tau_d^- L$ is a
  right determiner of $g$ in $\M$.
\end{theorem}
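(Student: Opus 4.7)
My plan is to adapt the classical Auslander--Reiten--Smal{\o} proof (Corollary XI.1.4 in \cite{MR1476671}) by replacing the classical defect formula with its $d$-exact counterpart \ref{defect_formula}. Given $h\colon V\to N$ in $\M$ with $\Img\Hom_\Lambda(X,h)\subseteq\Img\Hom_\Lambda(X,g)$ for $X=\Lambda\oplus\tau_d^- L$, I will factor $h$ through $g$ by constructing a suitable $d$-pullback and showing it is contractible. Using \ref{existence_d-pullback} and \ref{d-pullback_left_exact_d-sequence}, I obtain a morphism of complexes in $\M$
\[
\begin{tikzcd}[column sep=small]
0\rar & L \rar{f'} \dar[equals] & U^1 \rar \dar & \cdots \rar & U^d \rar{g'} \dar{u^d} & V \dar{h} \\
0\rar & L \rar{f} & M^1 \rar & \cdots \rar & M^d \rar{g} & N
\end{tikzcd}
\]
whose top row $\varepsilon$ is a left $d$-exact sequence in $\M$. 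If $\varepsilon$ turns out to be a contractible $d$-exact sequence, then by \ref{contractible_iff_split_morphisms} there is a section $s\colon V\to U^d$ of $g'$, and commutativity of the rightmost square gives $h=g\circ u^d\circ s$, as desired.

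The core computational input is the mapping cone
\[
U^1\to U^2\oplus M^1\to\cdots\to U^d\oplus M^{d-1}\to V\oplus M^d\to N,
\]
which is left $d$-exact by the definition of a $d$-pullback. Testing against $\Hom_\Lambda(X,-)$ for $X\in\M$ and reading off the top component of the penultimate differential, I extract the following ``lifting principle'': whenever $\phi\colon X\to V$ satisfies $h\circ\phi\in\Img\Hom_\Lambda(X,g)$, then $\phi\in\Img\Hom_\Lambda(X,g')$. Applying this at $X=\Lambda$ and using $\Hom_\Lambda(\Lambda,M)\cong M$, the hypothesis $\Img h\subseteq\Img g$ yields that $g'$ is an epimorphism, whence \ref{d-ct_a2} upgrades $\varepsilon$ to a genuine $d$-exact sequence in $\M$. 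When $g$ is already an epimorphism this first application is unnecessary---\ref{props_d-pullback} provides the $d$-exactness of $\varepsilon$ directly---which explains why $\tau_d^- L$ alone determines $g$ in that case.

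Applying the lifting principle at $X=\tau_d^- L$, the hypothesis now forces $\varepsilon^*(\tau_d^- L)=0$. The defect formula \ref{defect_formula} gives $\varepsilon_*(\tau_d\tau_d^- L)\cong D\varepsilon^*(\tau_d^- L)=0$; since $\tau_d\tau_d^-$ is the identity on $\iM$ by \ref{taud_eq} and $\varepsilon_*$ annihilates injective modules (any map from $L$ to an injective extends along the monomorphism $f'$), this rearranges to $\varepsilon_*(L)=0$. Hence $1_L$ lies in the image of precomposition with $f'$, i.e.\ $f'$ is a split monomorphism, so $\varepsilon$ is contractible by \ref{contractible_iff_split_morphisms}, and the argument of the first paragraph concludes the proof. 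The main obstacles I anticipate are the sign book-keeping in the mapping-cone argument when extracting the top component of the differential, together with the careful identification of $\varepsilon_*(\tau_d\tau_d^- L)$ with $\varepsilon_*(L)$ in the presence of possible injective summands of $L$.
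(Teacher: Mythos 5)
Your proposal is correct and follows essentially the same route as the paper's proof: the same $d$-pullback construction via \ref{existence_d-pullback} and \ref{d-pullback_left_exact_d-sequence}, the same ``lifting principle'' (which the paper calls the factorisation property of $d$-pullback diagrams), the test against $\Lambda$ to make $g'$ an epimorphism, and the defect formula to show $f'$ is a split monomorphism so that \ref{contractible_iff_split_morphisms} splits $g'$. The only cosmetic difference is that you inline the argument of \ref{factorization_defect_formula} (including the correct observation that $\varepsilon_*$ vanishes on injectives, which reconciles $\tau_d\tau_d^-L$ with $L$) where the paper simply cites that lemma.
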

\begin{proof}
  Firstly, let $h\colon V\to N$ be a morphism in $\M$ such that
  \begin{equation*}
    \Img\Hom_\Lambda(\Lambda\oplus\tau_d^-
    L,h)\subseteq\Img\Hom_\Lambda(\Lambda\oplus\tau_d^-L,g),
  \end{equation*}
  that is for every morphism $h'\colon \Lambda\oplus\tau_d^-L\to V$ the
  composite $h\circ h'$ factors through $g$. Applying \ref{existence_d-pullback}
  and \ref{d-pullback_left_exact_d-sequence}, we obtain the solid $d$-pullback
  diagram
  \begin{equation*}
    \begin{tikzcd}
      &&&&\Lambda\oplus\tau_d^-L\rar{h'}\arrow[bend right=50,dotted]{dd}\dar[dotted]&V\dar[equals]\\
      0\rar&L\rar{f'}\dar[equals]&U^1\rar\dar&\cdots\rar&U^d\rar{g'}\dar&V\dar{h}\rar[dotted]&0\\
      0\rar&L\rar{f}&M^1\rar&\cdots\rar&M^d\rar{g}&N
    \end{tikzcd}
  \end{equation*}
  in which the middle row is also a left $d$-exact sequence in $\M$. Then, by
  the factorisation property of $d$-pullback diagrams we conclude that every
  $h'$ as above factors through $g'$. Since $g'$ is an epimorphism if and only
  if every morphism $\Lambda\to V$ factors through $g'$, we deduce from the
  previous discussion that $g'$ is indeed an epimorphism. Applying
  \ref{d-ct_a2}, we conclude the middle row is a $d$-exact sequence in $\M$.

  Secondly, since every morphism $\tau_d^-L\to V$ factors through $g'$, by
  \ref{factorization_defect_formula} every morphism $L\to L$ factors through
  $f'$. In particular, $f'$ is a split monomorphism and by
  \ref{contractible_iff_split_morphisms} we have that $g'$ is a split
  epimorphism as well. Therefore $h$ factors through $g$. This shows that
  $\Lambda\oplus\tau_d^-L$ is a right determiner of $g$.
  
  Suppose now that $g$ is an epimorphism. Then, in the diagram above, the bottom
  row is a $d$-exact sequence by \ref{d-ct_a2}. Consequently, by
  \ref{props_d-pullback} the middle row is also a $d$-exact sequence. Then, the
  argument in the previous paragraph shows that $h$ factors through $g$.
  Therefore $\tau_d^-L$ is a right determiner of $g$ in this case.
\end{proof}

The following statement is one of the main results in this section. Although it
is a special case of Proposition 3.9 in \cite{MR3183886}, we give a more
constructive proof using Auslander's defect formula.

\begin{theorem}
  \label{auslander2} Let $\M$ be a $d$-cluster-tilting subcategory of
  $\mmod\Lambda$ and $X,N\in\M$. Then, for each $\End_\Lambda(X)$-submodule
  $H\subseteq\Hom_\Lambda(X,N)$ there exists a morphism $g\colon M\to N$ in $\M$
  which is right $X$-determined in $\M$ such that $\Img\Hom_\Lambda(X,g)=H$.
\end{theorem}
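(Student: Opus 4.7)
Since $\Hom_\Lambda(X,N)$ has finite length as an $R$-module, the $\End_\Lambda(X)$-submodule $H$ is finitely generated. Pick generators $\phi_1,\ldots,\phi_n\in H$ and assemble them into a morphism $h\colon X^n\to N$ with components $\phi_i$. Any morphism $X\to X^n$ has the form $(\alpha_1,\ldots,\alpha_n)^\top$ with $\alpha_i\in\End_\Lambda(X)$, so its composite with $h$ equals $\sum_i\phi_i\alpha_i$ and ranges exactly over the $\End_\Lambda(X)$-span of the $\phi_i$; hence $\Img\Hom_\Lambda(X,h)=H$. So $h$ already achieves the image condition but is not yet right $X$-determined.

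The key idea is to produce instead a $d$-exact sequence
\begin{equation*}
    \delta\colon\quad 0\to\tau_d X\to N^1\to\cdots\to N^d\xrightarrow{g}N\to 0
\end{equation*}
in $\M$, with $g$ an epimorphism and starting term $\tau_d X$. Once such a $\delta$ is at hand, Theorem~\ref{auslander1} (the epimorphism case) applies, and since $\tau_d^-(\tau_d X)\cong X$ in $\sM$ by \ref{taud_eq}, the object $X$ itself (modulo a projective summand absorbed by $\Lambda$) serves as a right determiner of $g$. To build $\delta$, I would first augment $h$ to an epimorphism $g_0:=(h,\pi)\colon X^n\oplus P\to N$ by adjoining a morphism $\pi\colon P\to N$ from a projective covering the cokernel of $h$, then extend $g_0$ to a $d$-exact sequence via \ref{minimal}\ref{it:exists} and \ref{d-ct_a2}. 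Next, replace the first term of this sequence by $\tau_d X$ using that $d$-extensions of $N$ by $\tau_d X$ are classified by $\Ext_\Lambda^d(N,\tau_d X)\cong D\sHom_\Lambda(X,N)$ from higher Auslander--Reiten duality (Corollary~\ref{ar_formula}), choosing the extension class corresponding to the $R$-linear functional on $\sHom_\Lambda(X,N)$ whose kernel is the image of $H$.

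To verify $\Img\Hom_\Lambda(X,g)=H$ for the resulting $\delta$, invoke the defect formula (\ref{defect_formula}): the contravariant defect $\delta^*(X)=\Hom_\Lambda(X,N)/\Img\Hom_\Lambda(X,g)$ is $D$-dual to $\delta_*(\tau_d X)$, and the chosen extension class makes $\delta_*(\tau_d X)\cong D(\Hom_\Lambda(X,N)/H)$, yielding the claimed equality. Alternatively, one can mimic the structure of the proof of \ref{auslander1} directly: given $h'\colon V\to N$ with $\Img\Hom_\Lambda(X,h')\subseteq H$, form the $d$-pullback via \ref{existence_d-pullback}, use that every $X\to V$ lifts to the pullback (by the image condition and the factorization property of the $d$-pullback), invoke \ref{d-ct_a2} to upgrade to a $d$-exact sequence (since $g$ is an epimorphism), and apply the dual of \ref{factorization_defect_formula} together with \ref{contractible_iff_split_morphisms} to conclude that $h'$ factors through $g$.

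\textbf{Main obstacle.} The delicate point is synthesizing the construction with the image condition: higher Auslander--Reiten duality and the defect formula factor naturally through the stable Hom $\sHom_\Lambda(X,N)$, whereas the equality $\Img\Hom_\Lambda(X,g)=H$ is an absolute one at the level of $\Hom_\Lambda(X,N)$. Reconciling these -- showing that the extension class really realizes $H$ rather than $H$ modified by some submodule of morphisms factoring through projectives -- requires careful bookkeeping of the projective augmentation~$\pi$ and the contribution of $\Lambda$ in the determiner of Theorem~\ref{auslander1}.
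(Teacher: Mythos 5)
There is a genuine gap, and it is exactly at the point you flag as a ``main obstacle'': it is not bookkeeping but an obstruction to your whole construction. Your plan is to realize $H$ as $\Img\Hom_\Lambda(X,g)$ for the last map $g$ of a $d$-exact sequence, in particular for an \emph{epimorphism} $g\colon M\to N$. But for any epimorphism $g$ the projective cover of $N$ factors through $g$, and every morphism $X\to N$ factoring through a projective factors through that projective cover; hence $\mathcal{P}(X,N)\subseteq\Img\Hom_\Lambda(X,g)$ for every epimorphism $g$. So when $H\not\supseteq\mathcal{P}(X,N)$ no epimorphism can have image exactly $H$, and no choice of extension class can fix this: the duality $\Ext_\Lambda^d(N,\tau_dX)\cong D\sHom_\Lambda(X,N)$ only sees the stable Hom, so it cannot distinguish $H$ from $H+\mathcal{P}(X,N)$. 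The paper resolves this by a structurally different step: it first proves the statement under the extra hypothesis $\mathcal{P}(X,N)\subseteq H$ (which is essentially your argument), and then for general $H$ builds a right $\Lambda$-determined morphism $u\colon N_H\to N$, maximal among those with $\mathcal{P}(X,-)\subseteq\Hom_\Lambda(X,u)^{-1}(H)$ (a finiteness argument on a poset of such morphisms, \ref{upperboundposet} and \ref{technical_lemma2}), applies the special case to $H':=\Hom_\Lambda(X,u)^{-1}(H)\subseteq\Hom_\Lambda(X,N_H)$, and takes $g:=u\circ g_{H'}$, which is in general \emph{not} an epimorphism. You would need to supply some such reduction.

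A secondary problem: a $d$-exact sequence starting at a single copy of $\tau_dX$ cannot realize an arbitrary $H$ even when $\mathcal{P}(X,N)\subseteq H$. By the defect formula, $\Hom_\Lambda(X,N)/\Img\Hom_\Lambda(X,g)\cong D\delta_*(\tau_dX)$, and if the sequence starts at $\tau_dX$ then $\delta_*(\tau_dX)$ is a cyclic $\End_\Lambda(\tau_dX)$-module; an arbitrary quotient $\Hom_\Lambda(X,N)/H$ need not be of this form (relatedly, $H$ need not be the kernel of a single functional). The starting term must be allowed to range over $\add\tau_dX$; the paper pins it down by taking a projective cover of $\delta_*(\tau_dX)$ over $\End_\Lambda(\tau_dX)$ and pushing out the sequence you built from $h$ along the corresponding map $L\to L_X$ (\ref{auslander2_prop}). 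With these two corrections your outline becomes the paper's \ref{auslander2_prop} and \ref{proj_auslander2}, but the passage from there to the full statement is missing.
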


In the case $d=1$, the above theorem specialises to a classical result due to
Auslander.

\begin{corollary}[(see Theorem XI.3.6 in \cite{MR1476671})]
  \label{auslander2-coro}
  Let $X,N\in\mmod\Lambda$. Then, for each $\End_\Lambda(X)$-submodule
  $H\subseteq\Hom_\Lambda(X,N)$ there exists a morphism $g\colon M\to N$ in
  $\mmod\Lambda$ which is right $X$-determined in $\mmod\Lambda$ and such that
  $\Img\Hom_\Lambda(X,g)=H$.
\end{corollary}

\begin{remark}
  It is also possible to deduce \ref{auslander2} from
  \ref{auslander2-coro} as follows. Given a $d$-cluster-tilting subcategory $\M$
  of $\mmod\Lambda$, $X,N\in\M$ and an $\End_\Lambda(X)$-submodule
  $H\subseteq\Hom_\Lambda(X,N)$, \ref{auslander2-coro} yields a morphism
  $g\colon Y\to N$ in $\mmod\Lambda$ which is right $X$-determined in
  $\mmod\Lambda$ and such that $\Img\Hom_\Lambda(X,g)=H$. Let $h\colon M\to Y$
  be a right $\M$-approximation of $Y$. Then, the morphism $g':=g\circ h$ is
  right $X$-determined in $\M$ and satisfies $\Img\Hom_\Lambda(X,g')=H$.
\end{remark}

Before giving the proof of \ref{auslander2} we prove the following general
result.

\begin{proposition}[(\cf Proposition XI.3.2 \cite{MR1476671})]
  \label{auslander2_prop} Let $\M$ be a $d$-cluster-tilting subcategory of
  $\mmod\Lambda$, $X\in\M$ and
  \begin{equation*}
    \begin{tikzcd}[column sep=small]
      \delta\colon&0\rar&L\rar{f}&M^1\rar&\cdots\rar&M^d\rar{g}&N\rar&0
    \end{tikzcd}
  \end{equation*}
  a $d$-exact sequence in $\M$. Then, there exists a $d$-pushout diagram
  \begin{equation*}
    \begin{tikzcd}[column sep=small]
      \delta\colon&0\rar&L\rar{f}\dar&M^1\rar\dar&\cdots\rar&M^d\rar{g}\dar&N\rar\dar[equals]&0\\
      \varepsilon\colon&0\rar&L_X\rar{f_X}&M_X^1\rar&\cdots\rar&M_X^d\rar{g_X}&N\rar&0
    \end{tikzcd}
  \end{equation*}
  such that
  \begin{enumerate}
  \item\label{it:g_X_right_X-determined} The morphism $g_X$ is right
    $X$-determined in $\M$.
  \item\label{it:H_gX-H_g} We have
    $\Img\Hom_\Lambda(X,g_X)=\Img\Hom_\Lambda(X,g)$.
  \end{enumerate}
\end{proposition}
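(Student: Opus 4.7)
The plan is to realise $\varepsilon$ as a $d$-pushout of $\delta$ along a morphism $u\colon L\to(\tau_d X)^n$ whose components generate the covariant defect $\delta_*(\tau_d X)$, and to verify both conclusions by invoking Auslander's defect formula~\ref{defect_formula}. Note that $\tau_d X\in\M$ by~\ref{taud_eq} and that $\delta_*(\tau_d X)$ is a finite-length $R$-module carrying a natural $\End_\Lambda(\tau_d X)$-action by post-composition; moreover,~\ref{defect_formula} identifies it with $D\delta^*(X)$, which is what motivates the construction.

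Concretely, I would choose morphisms $\sigma_1,\dots,\sigma_n\colon L\to\tau_d X$ whose classes generate $\delta_*(\tau_d X)$ as an $\End_\Lambda(\tau_d X)$-module, assemble them into $u=(\sigma_1,\dots,\sigma_n)^T\colon L\to L_X$ with $L_X:=(\tau_d X)^n$, and form the $d$-pushout of the complex $L\to M^1\to\cdots\to M^d$ along $u$ using the dual of~\ref{existence_d-pullback}. The zero-composition conditions satisfied by $\delta$ together with the universal property of the mapping cone of the pushout yield a unique extension $g_X\colon M_X^d\to N$ making the full diagram commute, and the dual of~\ref{props_d-pullback} ensures that the resulting sequence $\varepsilon$ is $d$-exact and that the morphism $\delta\to\varepsilon$ is the identity on $N$.

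To verify~\ref{it:H_gX-H_g}, I would show that the natural map $\varepsilon_*(\tau_d X)\to\delta_*(\tau_d X)$ given by precomposition with $u$ is an isomorphism: surjectivity is immediate from the choice of the $\sigma_i$ as generators, while injectivity follows from the universal property of the $d$-pushout, which asserts that any $\alpha\colon L_X\to\tau_d X$ satisfying $\alpha\circ u=\beta\circ f$ for some $\beta\colon M^1\to\tau_d X$ must factor as $\alpha=\gamma\circ f_X$ for some $\gamma\colon M_X^1\to\tau_d X$. The defect formula~\ref{defect_formula} then transports this isomorphism to an equality $\delta^*(X)=\varepsilon^*(X)$ of quotients of $\Hom_\Lambda(X,N)$, which is precisely the content of~\ref{it:H_gX-H_g}.

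To verify~\ref{it:g_X_right_X-determined}, let $h\colon V\to N$ in $\M$ satisfy $\Img\Hom_\Lambda(X,h)\subseteq\Img\Hom_\Lambda(X,g_X)$. Forming the $d$-pullback of $\varepsilon$ along $h$ via~\ref{existence_d-pullback} and invoking~\ref{props_d-pullback}, we obtain a $d$-exact sequence $\varepsilon'\colon 0\to L_X\xto{f'_X}U^1\to\cdots\to U^d\xto{g'_X}V\to 0$ in $\M$. The hypothesis on $h$ combined with the pullback property forces every morphism $X\to V$ to factor through $g'_X$; since $g'_X$ is an epimorphism and $\tau_d^- L_X\cong X^n$ in~$\sM$, every morphism $\tau_d^- L_X\to V$ factors through $g'_X$ as well, as any projective summand of $\tau_d^- L_X$ lifts across the epimorphism $g'_X$. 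Applying~\ref{factorization_defect_formula} to $\varepsilon'$ with $L_X$ in place of $X$, every morphism $L_X\to L_X$ factors through $f'_X$, so $f'_X$ is a split monomorphism; hence by~\ref{contractible_iff_split_morphisms} the morphism $g'_X$ is a split epimorphism and $h$ factors through $g_X$, as required. The main obstacle I anticipate is the careful handling of the stable-category identification $\tau_d^-(\tau_d X)^n\cong X^n$ in~$\sM$ (i.e., up to projective summands) when reducing questions about morphisms from $\tau_d^- L_X$ to questions about morphisms from $X$; once this is navigated, the rest is a formal consequence of the defect formula together with the dual pullback/pushout manipulations developed in Section~\ref{sec:preliminaries}.
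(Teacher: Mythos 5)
Your proposal is correct and follows essentially the same route as the paper's proof: push out $\delta$ along a morphism $L\to L_X$ with $L_X\in\add\tau_dX$ chosen so that the induced map $\varepsilon_*(\tau_dX)\to\delta_*(\tau_dX)$ is surjective (the paper takes a projective cover over $\End_\Lambda(\tau_dX)$ where you take a generating set, an immaterial difference), and then transfers the conclusion through the defect formula \ref{defect_formula}. The only divergences are that you re-derive the right $X$-determinacy of $g_X$ by hand via the $d$-pullback along $h$, where the paper simply cites \ref{auslander1} together with \ref{taud_eq}, and that you additionally prove injectivity of $\varepsilon_*(\tau_dX)\to\delta_*(\tau_dX)$, which is not needed since the comparison map $\delta^*(X)\to\varepsilon^*(X)$ is automatically surjective.
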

\begin{proof}
  Let $h\colon L\to L_X$ be a left $(\add\tau_dX)$-approximation of $L$. Thus,
  the composite
  \begin{equation*}
    \begin{tikzcd}
      \Hom_\Lambda(L_X,\tau_dX)\rar{(h,\tau_dX)}&\Hom_\Lambda(L,\tau_dX)\rar&\delta_*(\tau_dX)
    \end{tikzcd}
  \end{equation*}
  yields a surjection from a projective $\End_\Lambda(\tau_dX)$-module onto
  $\delta_*(\tau_dX)$. Also, by the dual of \ref{existence_d-pullback} and
  \ref{d-pullback_left_exact_d-sequence} there exists a $d$-pushout diagram
  \begin{equation*}
    \begin{tikzcd}
      \delta\dar{\phi}&0\rar&L\rar{f}\dar{h}&M^1\rar\dar&\cdots\rar&M^d\rar{g}\dar&N\rar\dar[equals]&0\\
      \varepsilon&0\rar&L_X\rar{f_X}&M_X^1\rar&\cdots\rar&M_X^d\rar{g_X}&N\rar&0.
    \end{tikzcd}
  \end{equation*}

  \ref{it:g_X_right_X-determined} By \ref{auslander1}, the morphism $g_X$ is
  right $\tau_d^{-}L_X$-determined and since $L_X\in\add\tau_dX$, it follows
  from \ref{taud_eq} that $g_X$ is right $X$-determined in $\M$.

  \ref{it:H_gX-H_g} Since $g$ factors through $g_X$ we have
  $\Img\Hom_\Lambda(X,g)\subseteq\Img\Hom_\Lambda(X,g_X)$. By applying the
  functor $\Hom_\Lambda(X,-)$ to the above diagram and using the snake lemma, we
  obtain a commutative diagram with exact rows
  \begin{equation*}
    \begin{tikzcd}[column sep=large]
      &&\frac{\Img(X,g_X)}{\Img(X,g)}\dar[tail]\\
      \Hom_\Lambda(X,M^d)\rar{(X,g)}\dar&\Hom_\Lambda(X,N)\rar\dar[equals]&\delta^*(X)\rar\dar[two heads]{\phi^*(X)}&0\\
      \Hom_\Lambda(X,M_X^d)\rar{(X,g_X)}&\Hom_\Lambda(X,N)\rar&\varepsilon^*(X)\rar&0
    \end{tikzcd}
  \end{equation*}
  and where the left column is a short exact sequence. Hence it is sufficient to
  show that $\phi^*(X)$ is a monomorphism or, equivalently by
  \ref{defect_formula}, that $\phi_*(\tau_dX)$ is an epimorphism. Applying the
  functor $\Hom_\Lambda(-,\tau_dX)$ to the same diagram as before yields a
  commutative diagram with exact rows
  \begin{equation*}
    \begin{tikzcd}[column sep=small]
      \Hom_\Lambda(M_X^1,\tau_dX)\rar\dar&\Hom_\Lambda(L_X,\tau_dX)\rar\dar[swap]{(h,\tau_dX)}\drar[two heads]&\varepsilon_*(\tau_dX)\rar\dar{\phi_*(\tau_dX)}&0\\
      \Hom_\Lambda(M^1,\tau_dX)\rar&\Hom_\Lambda(L,\tau_dX)\rar&\delta_*(\tau_dX)\rar&0
    \end{tikzcd}
  \end{equation*}
  where the diagonal arrow is an epimorphism since it is a projective cover.
  Hence $\phi_*(\tau_dX)$ is an epimorphism and the claim follows.
\end{proof}

As a consequence of \ref{auslander2_prop} we obtain the following special case
of \ref{auslander2}.

\begin{corollary}[(\cf Corollary XI.3.4 \cite{MR1476671})]
  \label{proj_auslander2} Let $\M$ be a $d$-cluster-tilting subcategory of
  $\mmod\Lambda$ and $X,N\in\M$. Then, for each $\End_\Lambda(X)$-submodule
  $H\subseteq\Hom_\Lambda(X,N)$ such that $\mathcal{P}(X,N)\subseteq H$ there
  exists an epimorphism $g_{X,H}\colon M_{X,H}\to N$ in $\M$ which is right
  $X$-determined in $\M$ and such that $\Img\Hom_\Lambda(X,g_{X,H})=H$.
\end{corollary}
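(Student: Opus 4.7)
The plan is to reduce the statement to Proposition \ref{auslander2_prop} by first constructing any epimorphism $g\colon M\to N$ in $\M$ whose image under $\Hom_\Lambda(X,-)$ is exactly $H$, and then applying the $d$-pushout construction of that proposition to upgrade $g$ to a morphism $g_X$ which is in addition right $X$-determined in $\M$.

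For the first step, I would pick a set of generators $h_1,\dots,h_n$ of $H$ as an $\End_\Lambda(X)$-module and assemble them into a morphism $(h_1,\dots,h_n)\colon X^n\to N$ whose image under $\Hom_\Lambda(X,-)$ is, by Yoneda, the $\End_\Lambda(X)$-submodule generated by the $h_i$, namely $H$ itself. To arrange surjectivity, I would combine this with a projective cover $p\colon P\to N$ of $N$ to form
\begin{equation*}
  g:=\begin{bmatrix}h_1&\cdots&h_n&p\end{bmatrix}\colon X^n\oplus P\longrightarrow N,
\end{equation*}
which is clearly an epimorphism and lies in $\M$ because $\Lambda\in\M$ and $X\in\M$. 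The induced map on $\Hom_\Lambda(X,-)$ has image $H+\Img\Hom_\Lambda(X,p)$, and since the latter summand is contained in $\mathcal{P}(X,N)\subseteq H$ by hypothesis, we get $\Img\Hom_\Lambda(X,g)=H$ exactly.

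Next, combining \ref{minimal} with \ref{d-ct_a2} (applied to the epimorphism $g$) yields a $d$-exact sequence in $\M$
\begin{equation*}
  \begin{tikzcd}[column sep=small]
    \delta\colon&0\rar&L\rar&M^1\rar&\cdots\rar&M^{d-1}\rar&X^n\oplus P\rar{g}&N\rar&0.
  \end{tikzcd}
\end{equation*}
Applying \ref{auslander2_prop} to $\delta$ produces a $d$-pushout diagram whose bottom row is a $d$-exact sequence ending in a morphism $g_X\colon M_X^d\to N$ satisfying both $\Img\Hom_\Lambda(X,g_X)=\Img\Hom_\Lambda(X,g)=H$ and the right $X$-determined property in $\M$. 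Since the bottom row is $d$-exact (in particular, $g_X$ is an epimorphism), this $g_X$ has all the required features.

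The substantive work has already been carried out in \ref{auslander2_prop}, so no real obstacle remains; the only subtle point is making sure the ``fattening'' of $(h_1,\dots,h_n)$ by a projective does not enlarge the image past $H$, which is precisely where the hypothesis $\mathcal{P}(X,N)\subseteq H$ gets used.
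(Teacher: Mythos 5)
Your proposal is correct and follows essentially the same route as the paper's proof: both construct $g=[g'\ p]$ with $g'$ a morphism from an object of $\add X$ realizing $H$ and $p$ a projective cover (using $\mathcal{P}(X,N)\subseteq H$ to keep the image equal to $H$), and then invoke \ref{minimal} together with \ref{auslander2_prop} to replace $g$ by a right $X$-determined epimorphism with the same image. Your version merely spells out the intermediate use of \ref{d-ct_a2} and the choice of generators explicitly.
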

\begin{proof}
  Observe that, since $H$ is a finitely generated $\End_\Lambda(X)$-module,
  there exist $X'\in\add X$, $g'\colon X'\to N$ such that
  $\Img\Hom_\Lambda(X,g')=H$. Let $p\colon P\to N$ be a projective cover. Then,
  $g:=[g'\ p]\colon X'\oplus P\to N$ is an epimorphism and
  \begin{equation*}
    \Img\Hom_\Lambda(X,g)=\Img\Hom_\Lambda(X,g')=H
  \end{equation*}
  since $\mathcal{P}(X,N)\subseteq H$. Taking into account that every
  epimorphism in $\M$ gives rise to a $d$-exact sequence (see \ref{minimal} and
  \ref{d-ct_a2}), we conclude from \ref{auslander2_prop} that there exists a
  right $X$-determined epimorphism $g_{X,H}\colon M_{X,H}\to N$ in $\M$ such
  that $\Img\Hom_\Lambda(X,g_{X,H})=H$, which is what we needed to show.
\end{proof}

We reduce the proof of \ref{auslander2} to \ref{proj_auslander2}. In what
follows we adapt the approach used in the proof of Lemma XI.3.7 in
\cite{MR1476671} to our setting. Fix a module $N\in\mmod\Lambda$ and let
$(\mmod\Lambda)/N$ be the comma category over $N$, see for example Section I.2
in \cite{MR1476671}. We also recall that there is an equivalence relation on
$(\mmod\Lambda)/N$ given by $f\sim f'$ if there exist morphisms $g\colon f\to
f'$ and $g'\colon f'\to f$ in $(\mmod\Lambda)/N$. We denote the equivalence
class of $f$ in $(\mmod\Lambda)/N$ by $[f]$. There is a partial order on the set
of equivalence classes in $(\mmod\Lambda)/N$ given by $[f]\leq [f']$ if there
exists a morphism $g\colon f\to f'$.




\begin{lemma}
  \label{technical_lemma2} Let $\M$ be a $d$-cluster-tilting subcategory of
  $\mmod\Lambda$, $X,N\in\M$ and $H$ an $\End_\Lambda(X)$-submodule of
  $\Hom_\Lambda(X,N)$. Then, there exists a morphism $u\colon N_H\to N$ in $\M$
  satisfying the following properties:
  \begin{enumerate}
  \item\label{it:i-a} $\Hom_{\Lambda}(X,u)(\mathcal{P}(X,N_H))\subseteq
    H\subseteq\Img\Hom_\Lambda(X,u)$.
  \item\label{it:tech_prop22} For every $h\colon V\to N$ in $\M$ such that
    $\Hom_\Lambda(X,h)(\mathcal{P}(X,V))\subseteq H$, the morphism $h$ factors
    through $u$.
  \end{enumerate}
\end{lemma}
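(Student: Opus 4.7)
The plan is to take $u$ to be an element of $\cat S$ whose image $\Img\Hom_\Lambda(\Lambda,u) \subseteq N$ is maximal among the submodules of this form realised by elements of $\cat S$, adapting the classical argument from \cite{MR1476671}.

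\emph{Existence of $u$.} The set $\cat S$ is non-empty since the zero morphism $0 \to N$ belongs to it (produced by \ref{Auslander2forproj} with the zero submodule). The submodule lattice of $N$ satisfies the ascending chain condition because $N$ has finite length, so a maximal element of $\{\Img\Hom_\Lambda(\Lambda,f): f \in \cat S\}$ exists; let $u\colon N_H \to N$ in $\cat S$ realise it. Property \ref{it:i-a} is then immediate from $u \in \cat S$.

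\emph{Property \ref{it:tech_prop22}.} Given $h\colon V \to N$ in $\M$ with $\Img\Hom_\Lambda(X,h) \subseteq H$, set $w := [u\ h]\colon N_H \oplus V \to N$, so that $\Img\Hom_\Lambda(\Lambda,w) = \Img\Hom_\Lambda(\Lambda,u) + \Img\Hom_\Lambda(\Lambda,h)$. By \ref{Auslander2forproj} there exists a right $\Lambda$-determined morphism $f''\colon M'' \to N$ in $\M$ with $\Img\Hom_\Lambda(\Lambda,f'') = \Img\Hom_\Lambda(\Lambda,w)$. Once we show $f'' \in \cat S$, the maximality of $\Img\Hom_\Lambda(\Lambda,u)$ forces $\Img\Hom_\Lambda(\Lambda,f'') = \Img\Hom_\Lambda(\Lambda,u)$, and hence $\Img\Hom_\Lambda(\Lambda,h) \subseteq \Img\Hom_\Lambda(\Lambda,u)$; by right $\Lambda$-determinedness of $u$ in $\M$, the morphism $h$ then factors through $u$, as required.

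\emph{Main obstacle: showing $f'' \in \cat S$.} Let $p = p_2 \circ p_1\colon X \to P \to M''$ with $P$ a finitely generated projective. The identification $\Hom_\Lambda(\Lambda,N) \cong N$ turns the equality $\Img\Hom_\Lambda(\Lambda,f'') = \Img\Hom_\Lambda(\Lambda,w)$ into $\Img f'' = \Img w$ as submodules of $N$. In particular, $f'' \circ p_2\colon P \to N$ takes values in $\Img w$, so by projectivity of $P$ we may lift it along the epimorphism $N_H \oplus V \twoheadrightarrow \Img w$ to some $k\colon P \to N_H \oplus V$ satisfying $w \circ k = f'' \circ p_2$. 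Writing $k \circ p_1 = (k_1, k_2)^{\mathsf T}$ we obtain
\[
f'' \circ p = u \circ k_1 + h \circ k_2,
\]
where $k_1 \in \mathcal P(X, N_H)$ together with $u \in \cat S$ forces $u \circ k_1 \in H$, while the hypothesis on $h$ forces $h \circ k_2 \in H$. Hence $f'' \circ p \in H$, completing the verification.
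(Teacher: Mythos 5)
Your proof is correct. It follows the same overall strategy as the paper --- work with the set $\cat{S}$, choose an element $u$ extremal with respect to $\Img\Hom_\Lambda(\Lambda,-)$, and exploit right $\Lambda$-determinedness --- but part \ref{it:tech_prop22} is organised differently. The paper first establishes \ref{upperboundposet} (directedness of $\cat{S}$), so that the chosen $u$ is a greatest element; then, given $h$, it regularises $h$ via \ref{Auslander2forproj} to some $h'\in\cat{S}$ with $\Img\Hom_\Lambda(\Lambda,h')=\Img\Hom_\Lambda(\Lambda,h)$ and factors $h$ through $h'$ and $h'$ through $u$. You instead take $u$ merely maximal, absorb $h$ into $u$ via $w=[u\ h]$, regularise the sum to $f''$, and verify $f''\in\cat{S}$, so that maximality forces $\Img\Hom_\Lambda(\Lambda,h)\subseteq\Img\Hom_\Lambda(\Lambda,u)$ and $h$ factors through $u$ directly by $\Lambda$-determinedness. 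This effectively merges \ref{upperboundposet} with the paper's argument for \ref{it:tech_prop22} into a single step, and it makes explicit the projective-lifting computation (lifting $f''\circ p_2$ through the epimorphism $N_H\oplus V\twoheadrightarrow\Img w$) that the paper leaves implicit both in the displayed equality of \ref{upperboundposet} and in the equality $\Hom_\Lambda(X,h')(\mathcal{P}(X,V'))=\Hom_\Lambda(X,h)(\mathcal{P}(X,V))$. All the individual steps check out: $\cat{S}\neq\emptyset$, the maximal element exists since $\Hom_\Lambda(\Lambda,N)\cong N$ has finite length, $k_1$ does lie in $\mathcal{P}(X,N_H)$ so $u\circ k_1\in H$ because $u\in\cat{S}$, and $h\circ k_2\in\Img\Hom_\Lambda(X,h)\subseteq H$ by hypothesis, so $f''\circ p\in H$ as claimed.
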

\begin{proof}
  Our method of proof is similar to that of proof of Lemma XI.3.7 in
  \cite{MR1476671}. Let $\cat{S}$ be the set of submodules $L$ of $N$ such that
  $\mathcal{P}(X,L)\subseteq H$. Note that if $L_1$ and $L_2$ lie in $\cat{S}$, then the
  sum $L_1+L_2$ also lies in $\cat{S}$ for we have
  \begin{equation*}
    \mathcal{P}(X,L_1+L_2)\subseteq\mathcal{P}(X,L_1)+\mathcal{P}(X,L_2)\subseteq H+H=H.
  \end{equation*}
  Moreover, given that $N$ has finite length, the set $\cat{S}$ has a maximal
  element, which we denote by $L_H$. Let $u\colon N_H\to L_H$ be a right
  $\M$-approximation of $L_H$. By construction, $u$ satisfies the first
  inclusion in property \ref{it:i-a}.

  Let us prove statement \ref{it:tech_prop22}. Let $h\colon V\to N$ be a
  morphism in $\M$ satisfying $\Hom_\Lambda(X,h)(\mathcal{P}(X,V))\subseteq H$. Note that $\Img
  h$ lies in $\cat{S}$ since
  \begin{equation*}
    \mathcal{P}(X,\Img h)\subseteq \Hom_\Lambda(X,h)(\mathcal{P}(X,V))\subseteq H.
  \end{equation*}
  The maximality of $L_H$ then implies the inclusion $\Img h\subseteq L_H$.
  Finally, since $u$ is a right $\M$-approximation of $L_H$, we conclude that
  $h$ factors through $u$ as desired. This proves \ref{it:tech_prop22}.
  
  
  We finish the proof by proving that $H\subseteq\Img\Hom_\Lambda(X,u)$, which
  is the remaining part of statement of \ref{it:i-a}. Let $h''\colon X'\to N$ be
  a morphism with $X'\in\add X$ and such that $\Img\Hom_{\Lambda}(X,h'')=H$
  (such a morphism $h''$ exists since $H$ is a finitely generated
  $\End_{\Lambda}(X)$-module). Clearly, $h''$ satisfies the assumptions in part
  \ref{it:tech_prop22} whence it factors through $u$. Therefore
  \begin{equation*}
    \Img\Hom_{\Lambda}(X,h'')=H\subseteq\Img\Hom_{\Lambda}(X,u),
  \end{equation*}
  as required.
\end{proof}


We are ready to prove \ref{auslander2}.

\begin{proof}[of \ref{auslander2}]
  Let $u\colon N_H\to N$ be the morphism given by \ref{technical_lemma2} and set
  $H':=\Hom_\Lambda(X,u)^{-1}(H)$. Since
  $\Hom_\Lambda(X,u)(\mathcal{P}(X,N_H))\subseteq H$ (see
  \ref{technical_lemma2}), by \ref{proj_auslander2} there exists a right
  $X$-determined morphism $g_{X,H'}\colon M_{X,H'}\to N_H$ such that
  $\Img\Hom_\Lambda(X,g_{X,H'})=H'$. Define $g:=u\circ g_{X,H'}$. Since by
  \ref{technical_lemma2} we have $H\subseteq\Img\Hom_{\Lambda}(X,u)$, it follows
  that
  \begin{equation*}
    \Img\Hom_\Lambda(X,g)=\Hom_{\Lambda}(X,u)(H')=H
  \end{equation*}
  It remains to show that $g$ is right $X$-determined in $\M$. Let $h\colon V\to
  N$ be a morphism such that $\Img(X,h)\subseteq H=\Img\Hom_\Lambda(X,g)$. Then,
  by \ref{technical_lemma2}\ref{it:tech_prop22} the morphism $h$ factors through
  $u$ as $h=u\circ v$, say. It follows that $\Img\Hom_\Lambda(X,v)\subseteq
  H'=\Img\Hom_\Lambda(X,g_{X,H'})$. Finally, since $g_{X,H'}$ is right
  $X$-determined in $\M$ we have that $v$ factors through $g_{X,H'}$ and hence
  $h$ factors through $g$. This shows that $g$ is right $X$-determined in $\M$.
  This finishes the proof of the theorem.
\end{proof}

\section{$d$-almost split sequences}
\label{sec:d-almost split_sequences}

In this section we establish the existence of $d$-almost split sequences in
$d$-cluster-tilting subcategories of $\mmod\Lambda$.

Let $\M$ be a subcategory of $\mmod\Lambda$. Recall that a morphism $g\colon
M\to N$ in $\M$ is \emph{right almost split in $\M$} if it is not a split
epimorphism and every morphism $L\to N$ in $\M$ which is not a split epimorphism
factors through $g$.

\begin{remark}
  Let $\M$ be a subcategory of $\mmod\Lambda$ containing $\Lambda$ and $g\colon
  M\to N$ a right almost split morphism in $\M$. Note that if $N$ is not
  projective, then its projective cover factors through $g$ which implies that $g$ is
  an epimorphism.
\end{remark}

The following result is classical.

\begin{proposition}
  \label{almost split-right_determined} Let $\M$ be a subcategory of
  $\mmod\Lambda$ and $g\colon M\to N$ a morphism in $\M$ such that $N$ is
  indecomposable. Then, the following statements are equivalent.
  \begin{enumerate}
  \item\label{it:right_almost split} $g$ is right almost split in $\M$.
  \item\label{it:right_N-determined} $g$ is right $N$-determined in $\M$ and
    $\Img\Hom_\Lambda(N,g)=\rad\Hom_\Lambda(N,N)$.
  \end{enumerate}
\end{proposition}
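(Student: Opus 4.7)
The plan is to exploit the fact that since $N$ is indecomposable, the endomorphism ring $\End_\Lambda(N)$ is local, and hence $\rad\End_\Lambda(N)$ coincides with the set of non-isomorphisms, equivalently with the set of endomorphisms of $N$ which are not split epimorphisms (since any surjective endomorphism of a finite-length module is an isomorphism, but more importantly, non-isomorphisms of a local ring form a two-sided ideal). The whole argument reduces to translating the factorisation properties through this identification.

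For the direction \ref{it:right_almost split} $\Rightarrow$ \ref{it:right_N-determined}, I would first show $\Img\Hom_\Lambda(N,g)=\rad\End_\Lambda(N)$. The inclusion $\subseteq$ holds because if $g\circ s$ were an isomorphism for some $s\colon N\to M$, then $g$ would be a split epimorphism, contradicting the definition of right almost split. The reverse inclusion follows since any non-isomorphism $N\to N$ is, by locality of $\End_\Lambda(N)$, not a split epimorphism and hence factors through $g$. Next, to show $g$ is right $N$-determined, take $h\colon V\to N$ in $\M$ with $\Img\Hom_\Lambda(N,h)\subseteq\Img\Hom_\Lambda(N,g)=\rad\End_\Lambda(N)$; then $h$ cannot be a split epimorphism (a section would give $1_N\in\Img\Hom_\Lambda(N,h)$), so $h$ factors through $g$ by the right almost split property.

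For the direction \ref{it:right_N-determined} $\Rightarrow$ \ref{it:right_almost split}, the morphism $g$ cannot be a split epimorphism because any section $s\colon N\to M$ would yield $1_N=g\circ s\in\rad\End_\Lambda(N)$, which is impossible. Now given $h\colon L\to N$ in $\M$ which is not a split epimorphism, I would verify the hypothesis of right $N$-determinedness: for every $t\colon N\to L$, the composite $h\circ t\colon N\to N$ is not an isomorphism (otherwise $t\circ(h\circ t)^{-1}$ would be a section of $h$), hence lies in $\rad\End_\Lambda(N)=\Img\Hom_\Lambda(N,g)$. Therefore $h$ factors through $g$, as desired.

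There is no genuine obstacle here: the proof is a formal manipulation once one observes that the indecomposability of $N$ allows one to identify ``not a split epimorphism'' with ``lying in the Jacobson radical of $\End_\Lambda(N)$'' for endomorphisms, and that this translates the universal property of right almost split morphisms into the numerical condition on the image together with the $N$-determination hypothesis. In particular this proposition is independent of higher Auslander--Reiten theory and uses nothing from the earlier sections.
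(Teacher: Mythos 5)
Your proof is correct and takes essentially the same approach as the paper's: both rest on the observation that, since $\End_\Lambda(N)$ is local, an endomorphism of $N$ fails to be a split epimorphism exactly when it lies in $\rad\End_\Lambda(N)$, and then translate the factorisation properties accordingly. You are in fact marginally more careful than the paper in the converse direction, where you explicitly check that $g$ itself is not a split epimorphism.
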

\begin{proof}
  Recall that since $N$ is indecomposable $\rad\Hom_\Lambda(N,N)$ consists
  precisely of the non-invertible morphisms in $\Hom_\Lambda(N,N)$.

  We prove that \ref{it:right_almost split} implies \ref{it:right_N-determined}.
  We begin by showing that $\Img\Hom_\Lambda(N,g)=\rad\Hom_\Lambda(N,N)$. It is
  clear from the definition of almost split morphism that
  $\rad\Hom_{\Lambda}(N,N)\subseteq\Img\Hom_\Lambda(N,g)$. Conversely, let
  $h\colon N\to M$ be a morphism and suppose that $g\circ
  h\notin\rad\Hom_{\Lambda}(N,N)$. Hence $g\circ h$ is invertible and therefore
  there exists a morphism $i\colon N\to N$ such that $g\circ h\circ i=1_N$. This
  contradicts the fact that $g$ is not a split epimorphism and the claim
  follows.

  Let $h\colon V\to N$ be a morphism in $\M$ such that
  $\Img\Hom_\Lambda(N,h)\subseteq\Img(N,g)$. We claim that $h$ is not a split
  epimorphism. Indeed, suppose otherwise; then there exists a morphism $i\colon
  N\to V$ such that $h\circ i=1_N$. Hence
  $1_N\in\Img\Hom_\Lambda(N,h)\subseteq\rad\Hom_\Lambda(N,N)$, a contradiction.
  Therefore $h$ is not a split epimorphism and, given that $g$ is right almost
  split, $h$ factors through $g$. This shows that $g$ is right $N$-determined in
  $\M$.

  We prove that \ref{it:right_N-determined} implies \ref{it:right_almost split}.
  Let $h\colon V\to N$ be a morphism in $\M$ which is not a split epimorphism.
  One can show, as above, that
  $\Img\Hom_\Lambda(N,h)\subseteq\rad\Hom_\Lambda(N,N)=\Img\Hom_\Lambda(N,g)$.
  Finally, since $g$ is right $N$-determined we have that $h$ factors through
  $g$, which is what we needed to prove.
\end{proof}

As a consequence of \ref{auslander2} and \ref{almost split-right_determined}, we
deduce the existence of right almost split morphisms in $d$-cluster-tilting
subcategories of $\mmod\Lambda$.

\begin{corollary}
  \label{existence_right_almost split_morphisms} Let $\M$ be a
  $d$-cluster-tilting subcategory of $\mmod\Lambda$. Then, for every
  indecomposable object $N\in\M$ there exists a morphism $M\to N$ in $\M$ which
  is right almost split in $\M$.
\end{corollary}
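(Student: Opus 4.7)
The plan is to apply Theorem \ref{auslander2} with $X := N$ and the submodule $H := \rad \Hom_\Lambda(N,N) \subseteq \Hom_\Lambda(N,N)$, and then invoke Proposition \ref{almost split-right_determined} to conclude that the resulting morphism is right almost split in $\M$.

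First I would note that, because $N$ is indecomposable in $\mmod\Lambda$, the endomorphism ring $\End_\Lambda(N)$ is local, so its Jacobson radical $\rad \Hom_\Lambda(N,N)$ is a two-sided ideal, hence in particular an $\End_\Lambda(N)$-submodule of $\Hom_\Lambda(N,N)$. Theorem \ref{auslander2} then yields a morphism $g \colon M \to N$ in $\M$ that is right $N$-determined in $\M$ and satisfies $\Img \Hom_\Lambda(N,g) = \rad \Hom_\Lambda(N,N)$.

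Next, having both defining conditions of statement \ref{it:right_N-determined} in \ref{almost split-right_determined} in hand, the equivalence with statement \ref{it:right_almost split} in that proposition immediately gives that $g$ is right almost split in $\M$. Since $N$ was an arbitrary indecomposable object of $\M$, this produces the desired right almost split morphism, completing the argument.

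There is no genuine obstacle here: the real work has already been carried out in \ref{auslander2} (existence of right $X$-determined morphisms with prescribed image, itself built on the defect formula \ref{defect_formula}, the higher Auslander--Reiten translation \ref{taud_eq}, and the $d$-pullback/$d$-pushout machinery) and in the classical characterization \ref{almost split-right_determined}. The only small point to check is that the hypothesis of \ref{auslander2} is met, i.e.\ that the Jacobson radical of a local ring is a submodule over itself, which is automatic.
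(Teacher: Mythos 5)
Your proposal is correct and follows exactly the paper's own argument: take $H=\rad\End_\Lambda(N)$, apply \ref{auslander2} to get a right $N$-determined morphism $g$ with $\Img\Hom_\Lambda(N,g)=\rad\Hom_\Lambda(N,N)$, and conclude via \ref{almost split-right_determined}. Your added remark that $\rad\Hom_\Lambda(N,N)$ is an $\End_\Lambda(N)$-submodule is a harmless verification the paper leaves implicit.
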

\begin{proof}
  Let $H:=\rad\End_\Lambda(N)$. Then, \ref{auslander2} yields a morphism
  $g\colon M\to N$ in $\M$ which is right $N$-determined in $\M$ and such that
  $\Img\Hom_\Lambda(N,g)=\rad\End_\Lambda(N,N)$. By \ref{almost
    split-right_determined}, the morphism $g$ is right almost split in $\M$.
\end{proof}

We recall the definition of a $d$-almost split sequence.

\begin{definition}[(see Definition 3.1 in \cite{MR2298819})]
  Let $\M$ be a $d$-cluster-tilting subcategory of $\mmod\Lambda$ and
  \begin{equation*}
    \begin{tikzcd}[column sep=small]
      \delta\colon&0\rar&L\rar{f}&M^1\rar&\cdots\rar&M^d\rar{g}&N\rar&0
    \end{tikzcd}
  \end{equation*}
  a $d$-exact sequence in $\M$. We say that $\delta$ is a \emph{$d$-almost split
    sequence in $\M$} if $f$ is left minimal and left almost split in $\M$, $g$
  is right minimal and right almost split in $\M$ and for each
  $i\in\set{1,\dots,d-1}$ the morphism $M^i\to M^{i+1}$ lies in the Jacobson
  radical of $\mmod\Lambda$.
\end{definition}

We are ready to prove the main theorem of this section.

\begin{theorem}[(see Theorem 3.3.1 in \cite{MR2298819})]
  \label{existence_d-almost split-sequences}
  Let $\M$ be a $d$-cluster-tilting subcategory of $\mmod\Lambda$. Then, for
  each indecomposable non-projective $\Lambda$-module $N\in\M$ there exists a
  $d$-almost split sequence in $\M$ of the form
  \begin{equation*}
    \begin{tikzcd}[column sep=small]
      0\rar&\tau_d N\rar&M^1\rar&\cdots\rar&M^d\rar&N\rar&0.
    \end{tikzcd}
  \end{equation*}
  Dually, for each indecomposable non-injective $\Lambda$-module $L\in\M$ there
  exists a $d$-almost split sequence in $\M$ of the form
  \begin{equation*}
    \begin{tikzcd}[column sep=small]
      0\rar&L\rar&M^1\rar&\cdots\rar&M^d\rar&\tau_d^{-}L\rar&0.
    \end{tikzcd}
  \end{equation*}
\end{theorem}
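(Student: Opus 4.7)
The plan is to construct the sequence from its right-hand end, starting with a right almost split morphism and carefully identifying the left term as $\tau_d N$. By Corollary \ref{existence_right_almost split_morphisms}, there exists a right almost split morphism $g\colon M^d\to N$ in $\M$; removing a maximal direct summand of $M^d$ annihilated by $g$, I may assume $g$ is right minimal. Since $N$ is indecomposable and non-projective, the projective cover $P\to N$ is not a split epimorphism and hence factors through $g$, so $g$ is an epimorphism. Propositions \ref{minimal} and \ref{d-ct_a2} then extend $g$ to a $d$-exact sequence
\[
\delta\colon 0\to L\xrightarrow{f} M^1\to\cdots\to M^d\xrightarrow{g} N\to 0
\]
in which every internal morphism $M^i\to M^{i+1}$ lies in the Jacobson radical of $\mmod\Lambda$ and with $f$ left minimal. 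What remains is the identification $L\cong\tau_d N$ and the left almost split property of $f$.

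For the identification, I apply Proposition \ref{auslander2_prop} with $X=N$ to $\delta$, producing a $d$-pushout $\delta\to\varepsilon$ whose leftmost object $L_N\in\add\tau_d N$ is chosen so that $\Hom_\Lambda(L_N,\tau_d N)\twoheadrightarrow\delta_*(\tau_d N)$ is a projective cover of modules over the stable endomorphism ring of $\tau_d N$. By Theorem \ref{defect_formula} combined with Proposition \ref{almost split-right_determined}, the defect satisfies $\delta_*(\tau_d N)\cong D\delta^*(N)\cong D(\End_\Lambda(N)/\rad\End_\Lambda(N))$, which is a simple module via the isomorphism of stable endomorphism rings provided by Theorem \ref{taud_eq}. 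Since $\tau_d N$ is indecomposable (inherited from $N$), this endomorphism ring is local and the projective cover of its simple top is itself, so Yoneda's lemma forces $L_N\cong\tau_d N$. The induced morphism $L\to L_N$ is then seen to be an isomorphism by exploiting the left minimality of $f$ and the universal property of the $d$-pushout.

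With $L\cong\tau_d N$ in hand, the left almost split property of $f$ follows: the dual of Proposition \ref{auslander1} applied to the monomorphism $f$ shows that $\tau_d N\cong L$ is a left determiner of $f$, while the dual of the defect formula gives $\delta_*(L)\cong\delta_*(\tau_d N)$ simple, hence $\Img\Hom_\Lambda(f,L)=\rad\End_\Lambda(L)$, and the dual of Proposition \ref{almost split-right_determined} then yields that $f$ is left almost split. The main obstacle is verifying that $L\to L_N$ is indeed an isomorphism: although $L_N\cong\tau_d N$ follows cleanly from the projective-cover argument, transferring this conclusion to $L$ itself requires careful use of the minimality hypotheses together with the structure of the $d$-pushout. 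Finally, the statement about indecomposable non-injective $L\in\M$ follows by applying the proved part to the $d$-cluster-tilting subcategory $D\M\subseteq\mmod\Lambda^{\op}$ and translating back via the duality $D$, using that $D$ interchanges $\tau_d$ and $\tau_d^{-}$.
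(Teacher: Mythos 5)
Your opening paragraph coincides with the paper's: both proofs start from a right minimal right almost split epimorphism $g\colon M^d\to N$ (via \ref{existence_right_almost split_morphisms}) and extend it by \ref{minimal} and \ref{d-ct_a2} to a $d$-exact sequence with radical internal maps and $f$ left minimal. After that you diverge: the paper first proves that $f$ is left almost split (pushout along a non-split monomorphism $h$, factor the resulting $g'$ back through the right almost split $g$, and use \ref{comparison_lemma} plus the fact that $f$ is in the radical to force $h$ to split), and only then identifies $L\cong\tau_dN$ in one line from \ref{defect_formula}: since $\delta^*(N)\neq0$ we get $\delta_*(\tau_dN)\neq0$, hence a map $L\to\tau_dN$ not factoring through $f$, which must be a split monomorphism because $f$ is already known to be left almost split. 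You reverse this order, and that reversal is where your argument breaks.

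The gap is the step you yourself flag as ``the main obstacle'': showing that the comparison map $L\to L_N\cong\tau_dN$ coming from \ref{auslander2_prop} is an isomorphism. What the projective-cover construction actually gives you is only that this map $h\colon L\to\tau_dN$ does \emph{not} factor through $f$ (its class generates the simple module $\delta_*(\tau_dN)$). To upgrade ``does not factor through $f$'' to ``is a split monomorphism, hence an isomorphism'' is precisely the left almost split property of $f$ --- which in your plan is only established \emph{after} the identification $L\cong\tau_dN$, using that identification as input. So as written the argument is circular. Left minimality of $f$ alone does not do this job: minimality controls endomorphisms of the sequence over a fixed right end, not arbitrary maps out of $L$. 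One can rescue your order of argument, but only by proving a genuine uniqueness statement for minimal left $d$-exact sequences ending in $g$ (e.g.\ factor $g_N$ back through the right minimal $g$, extend to an endomorphism of $\delta$ by the lifting property of the construction in \ref{minimal}, and show inductively, using right minimality of each approximation $M^i\to C^{i+1}$, that every component of this endomorphism is an automorphism, so that $L$ is a nonzero direct summand of the indecomposable $\tau_dN$). None of this is in your proposal, and it is not a routine omission --- it is the entire content of the theorem at that point. The remaining pieces (the computation $\delta_*(\tau_dN)\cong D(\End_\Lambda(N)/\rad\End_\Lambda(N))$ via \ref{defect_formula} and \ref{almost split-right_determined}, the deduction of left almost splitness from the dual of \ref{auslander1} once $L\cong\tau_dN$ is known, and the duality argument for the second statement) are fine. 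I would recommend either supplying the uniqueness argument in full, or adopting the paper's order, which avoids it entirely.
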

\begin{proof}
  Let $N\in\M$ be an indecomposable non-projective $\Lambda$-module. The case of
  an indecomposable non-injective $\Lambda$-module $L\in\M$ is dual.

  By \ref{existence_right_almost split_morphisms} there exists a right minimal
  morphism $g\colon M^d\to N$ which is right almost split in $\M$. Also, note
  that $g$ lies in the Jacobson radical of $\mmod\Lambda$ since $N$ is
  indecomposable and $g$ is not a split epimorphism. By \ref{minimal} there
  exists a left $d$-exact sequence
  \begin{equation*}
    \begin{tikzcd}[column sep=small]
      \delta\colon&0\rar&N'\rar{f}&M^1\rar&\cdots\rar&M^d\rar{g}&N\rar&0
    \end{tikzcd}
  \end{equation*}
  such that for each $i\in\set{1,\dots,d-1}$ the morphism $M^i\to M^{i+1}$ is
  right minimal. Moreover, since $g$ is right minimal, \ref{minimal-jacobson}
  implies that for each $i\in\set{1,\dots,d-1}$ the morphism $M^i\to M^{i+1}$
  lies in the Jacobson radical of $\mmod\Lambda$. Since in particular $M^1\to
  M^2$ lies in the Jacobson radical of $\mmod\Lambda$, the dual of
  \ref{minimal-jacobson} implies that $f$ is left minimal.

  We claim that $f$ is left almost split in $\M$. Indeed, let $h\colon N'\to
  N''$ be a morphism in $\M$ which is not a split monomorphism and suppose that
  $h$ does not factor through $f$. By \ref{existence_d-pullback} and
  \ref{props_d-pullback} there exists a $d$-pushout diagram
  \begin{equation*}
    \begin{tikzcd}[column sep=small]
      0\rar&N'\rar{f}\dar{h}&M^1\rar\dar&\cdots\rar&M^d\rar{g}\dar&N\rar\dar[equals]&0\\
      0\rar&N''\rar{f'}&X^1\rar&\cdots\rar&X^d\rar{g'}&N\rar&0
    \end{tikzcd}
  \end{equation*}
  in which the bottom row is a $d$-exact sequence in $\M$. Moreover, $g'$ is not
  a split epimorphism; if it were, by \ref{contractible_iff_split_morphisms} the
  morphism $f'$ would be a split monomorphism and then $h$ would factor through
  $f$ which would contradict our assumption on $h$. Therefore $g'$ factors
  through the right almost split morphism $g$. It follows that there exists a
  commutative diagram
  \begin{equation*}
    \begin{tikzcd}[column sep=small]
      0\rar&N'\rar{f}\dar{h}&M^1\rar\dar&\cdots\rar&M^d\rar{g}\dar&N\rar\dar[equals]&0\\
      0\rar&N''\rar{f'}\dar{h'}&X^1\rar\dar&\cdots\rar&X^d\rar{g'}\dar&N\rar\dar[equals]&0\\
      0\rar&N'\rar{f}&M^1\rar&\cdots\rar&M^d\rar{g}&N\rar&0
    \end{tikzcd}
  \end{equation*}
  By \ref{comparison_lemma} there exists a morphism $u\colon M^1\to N'$ such
  that $h'\circ h-1_{N'}=u\circ f$ or, equivalently, $h'\circ h=1_{N'}+u\circ
  f$. Since $f$ lies in the Jacobson radical of $\mmod\Lambda$, the morphism
  $h'\circ h$ is an isomorphism. In particular $h$ is a split monomorphism,
  which is a contradiction. This shows that $f$ is left almost split in $\M$.
  
  It remains to show that $N'\cong\tau_d N$. Since $f$ is left almost split in
  $\M$, the $\Lambda$-module $N'$ is indecomposable. Moreover, since $g$ is not
  a split epimorphism the defect $\delta^*(N)$ is non-zero. Hence, by
  \ref{defect_formula}, the defect $\delta_*(\tau_d N)$ is non-zero. Therefore
  there exists a morphism $s\colon N'\to\tau_d N$ which does not factor through
  $f$ whence $s$ is a split monomorphism. Since $N'$ is indecomposable the
  morphism $s$ is in fact an isomorphism. This finishes the proof of the
  theorem.
\end{proof}


\bibliographystyle{alpha} \bibliography{mathscinet}

\end{document}